\theoremstyle{plain}
\newtheorem{thm}{Theorem}[section]
\newtheorem{prop}[thm]{Proposition}
\newtheorem{lem}[thm]{Lemma}
\newtheorem{cor}[thm]{Corollary}
\newtheorem{rmk}[thm]{Remark}
\newcommand {\R} {\mathbb{R}} \newcommand {\Z} {\mathbb{Z}}
 \newcommand {\N} {\mathbb{N}}
\newcommand {\C} {\mathbb{C}}
\newcommand {\p} {\partial}
\newcommand {\D} {\Delta}
\newcommand {\supp} {\text{supp}}
\newcommand{\tx}{\tilde{x}}
\newcommand{\ty}{\tilde{y}}
\DeclareMathOperator {\dist} {dist}
\DeclareMathOperator {\re} {Re}
\DeclareMathOperator {\im} {Im}
\DeclareMathOperator {\sign} {sgn}
\DeclareMathOperator{\conv} {conv}
\DeclareMathOperator{\F} {\mathcal{F}}
\DeclareMathOperator{\csch}{csch}
\title[Quantitative Unique Continuation for Nonlocal Operators]{On Two Methods for Quantitative Unique Continuation Results for Some Nonlocal Operators}
\author{Mar\'ia \'Angeles Garc\'ia-Ferrero}
\address{Max-Planck Institute for Mathematics in the Sciences, Inselstraße 22, 04103 Leipzig, Germany}
\email{Maria.Garcia@mis.mpg.de}
\author{Angkana Rüland}
\address{Max-Planck Institute for Mathematics in the Sciences, Inselstraße 22, 04103 Leipzig, Germany}
\email{rueland@mis.mpg.de}
\begin{document}
\maketitle
\begin{abstract}
In this article we present two mechanisms for deducing logarithmic quantitative unique continuation bounds for certain classes of integral operators. In our first method, expanding the corresponding integral kernels, we exploit the logarithmic stability of the moment problem. In our second method we rely on the presence of branch-cut singularities for certain Fourier multipliers. As an application we present quantitative Runge approximation results for the operator $ L_s(D) = \sum\limits_{j=1}^{n}(-\p_{x_j}^2)^{s} + q$ with $s\in [\frac{1}{2},1)$ and $q\in L^{\infty}$ acting on functions on ~$\R^n$.
\end{abstract}

\tableofcontents
\addtocontents{toc}{\setcounter{tocdepth}{1}}

\section{Introduction}

In this article we present two mechanisms of obtaining logarithmic stability estimates for a class of nonlocal operators which are of significance in quantitative estimates in control theory, inverse problems and (Runge type) approximation results.
Let us describe the set-up of these problems: Often in inverse problems or control theory, one is interested in studying injectivity and stability properties associated with the \emph{inverse} of a \emph{compact} operator $T: X \rightarrow Y$ mapping between two infinite-dimensional function spaces, e.g. between two infinite-dimensional Hilbert spaces. A prototypical class of operators to have in mind are integral operators
\begin{align}
\label{eq:T}
T f(x) = \int\limits_{\R^n} k(x,y) f(y) dy
\end{align}
with smooth integral kernels $k(x,y)$. Here, for instance, we may first consider the operator acting on $f \in C_c^{\infty}(I)$ and then extend it to some suitable Hilbert space. In the following we will always assume that $I \subset \R^n$ is open and bounded and that the operators $T$ are sampled/ measured on a domain $J \subset \R^n$ which is also bounded and open. If $\overline{I}\cap \overline{J} = \emptyset$ and if $k(x,y)$ is sufficiently regular for $y\in I$ and $x\in J$, the map $f\mapsto Tf$ is compact. Typical applications involving operators of this type include the inversion properties of the Hilbert transform which arise in medical imaging \cite{ADK15, APS14, Rue17}, the fractional Calder\'on problem \cite{GSU16, RS17, RS18a} or quantitative control theoretic problems as in \cite{FZ00, LR95, RS17b, RS17a}. Assuming $T$ to act on infinite dimensional spaces, the compactness of $T$ and the non-compactness of the unit ball in infinite dimensions implies that the inverse of $T$ \emph{cannot} be continuous. Yet, general functional analytical results \cite{T66, B89} entail that after restricting to a certain compact subset $K \subset X$, the operator $T^{-1}$ becomes continuous with some modulus of continuity $\omega(t)$, i.e.
\begin{align*}
\|f\|_{X} \leq \omega(\|T f\|_{Y}), \ \|f\|_{K} \leq 1,
\end{align*}  
where $\omega(t)$ is a monotone function satisfying $\omega(0)=0$. It is then of interest to provide bounds on the modulus of continuity $\omega$ depending on the choices of the spaces $X$, $Y$ and $K$. In this article, we present two mechanisms of deducing such bounds with logarithmic moduli, i.e. for a certain class of integral operators of the form \eqref{eq:T}, we prove estimates of the type
\begin{align}
\label{eq:log_est}
\|f\|_{L^2(I)}
& \leq C \frac{\|f\|_{H^1(I)}}{\left| \log\left( C \frac{\|f\|_{H^1(I)}}{\|T f\|_{L^2(J)}} \right) \right|^{\nu}},
\end{align}
where in most cases the constants $\nu, C>0$ only depend on the (relative) geometry of $I, J$ and the dimension. Using the methods from \cite{KRS20}, it can be seen that in our applications these estimates are optimal (up to the choice of the constant $\nu$). 

For non-local operators it is in general rather hard to deduce quantitative stability/unique continuation bounds of the form \eqref{eq:log_est}, as for general nonlocal operators the most commonly used tools in proving quantitative unique continuation -- Carleman estimates and frequency function bounds -- are often not directly available (see however the results in \cite{FF14,FF15,Rue15,GFR19,S15, RS17, Rue17, RS17a, GFR19, BG18, LLR19} for qualitative and quantitative bounds for the fractional Laplacian and related operators which can be addressed by adapting local techniques by virtue of the Caffarelli-Silvestre extension).
The methods which we present here thus offer alternatives to these arguments for certain classes of nonlocal operators and are based on
\begin{itemize}
\item[(i)] the stability estimates of the \emph{(Hausdorff) moment problem},
\item[(ii)] the existence of \emph{branch-cut singularities} in symbols of certain pseudodifferential operators.
\end{itemize}
For both arguments we heavily exploit quantitative analytic continuation results, see for instance \cite{Vessella99}.
Settings to which these methods apply include the stability properties of the Hilbert, the Fourier, the Laplace, the Fourier-Laplace transforms and the one-dimensional fractional Laplacian (for the method (i)), and pseudodifferential operators of the form
\begin{align*}
P(D) = |D_{x_n}|^{2s} + L(D) + m(D'), \ s \in \R \setminus \Z,
\end{align*}
where $L(D)= \sum\limits_{\alpha \in (\N\cup \{0\})^n, \ |\alpha|\leq m} a_{\alpha} D^{\alpha}$ is a constant coefficient local operator and $m(D')$ is a pseudodifferential operator with constant coefficients depending on the derivatives $\p_{x_1}, \dots, \p_{x_{n-1}}$ (for the method (ii)). As an application of the method from (ii) we deduce Runge approximation results for the operator $ L_s(D)= \sum\limits_{j=1}^{n}(-\p_{x_j}^2)^{s}+q$ with $s\in [\frac{1}{2},1)$ and $q\in L^\infty$:

\begin{thm}
\label{thm:quant_Runge}
Assume that $\Omega = (-1,1)^n \subset \R^n$ and that $W \subset \Omega_e$ is an open, bounded domain. 
Let $q\in L^{\infty}(\Omega)$, let $L_s(D):= \sum\limits_{j=1}^{n}(-\p_{x_j}^2)^s +q$ with $s\in [\frac{1}{2},1)$ and suppose that $q$ is such that zero is not a Dirichlet eigenvalue of the operator $L_s$. Assume further that $v\in H^{s}_{\overline{\Omega}}$ and that $\epsilon>0$. Let $P_q$ denote the Poisson operator of $L_{s}$. Then there exist a constant $\mu>0$ and functions $f_{\epsilon} \in C_c^{\infty}(W)$ such that
\begin{align*}
\|P_q f_{\epsilon} - v \| \leq \epsilon \|v\|_{H^{s}_{\overline{\Omega}}}, \ \ \|f_{\epsilon}\|_{H^{s}(W)} \leq C e^{C \epsilon^{-\mu}}\|v\|_{L^2(\Omega)}.
\end{align*} 
\end{thm}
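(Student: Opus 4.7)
The plan is to derive the theorem by a standard duality / constructive Hahn--Banach scheme of Fursikov-Imanuvilov / Robbiano type (as adapted to the fractional Calder\'on setting in \cite{RS17a}), driven by a quantitative unique continuation (UC) estimate for $L_s$. The UC input is precisely the output of the paper's method (ii), which exploits the branch-cut singularities of the Fourier symbol $\sum_j|\xi_j|^{2s}$ along the coordinate hyperplanes $\xi_j = 0$.

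First, I would set up the duality. Consider the restricted Poisson map $A\colon C_c^\infty(W) \to L^2(\Omega)$, $Af := (P_q f)|_\Omega$. Since zero is not a Dirichlet eigenvalue of $L_s$, the adjoint Dirichlet problem $L_s \tilde w = g$ in $\Omega$, $\tilde w = 0$ in $\Omega_e$, admits a unique solution for every $g\in L^2(\Omega)$. Using the symmetry of the nonlocal bilinear form associated with $L_s$ together with $L_s(P_q f) = 0$ in $\Omega$ and $\supp f \subset W \subset \Omega_e$, a short computation gives $A^\ast g = -(L_s \tilde w)|_W$. Qualitative Runge density is then equivalent to the qualitative UC property for $L_s$: if $\tilde w|_W = 0$ and $(L_s \tilde w)|_W = 0$, then $\tilde w \equiv 0$, hence $g = 0$.

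Second, I would invoke the paper's quantitative UC estimate for $L_s$, applied to the adjoint solution $\tilde w$ (which vanishes on $W$), and combine it with an interior a priori bound of the form $\|\tilde w\|_{H^1}\lesssim \|g\|_{H^\sigma(\Omega)}$ for a suitable $\sigma$. This yields a logarithmic stability estimate at the level of $A^\ast$,
\[
\|g\|_{L^2(\Omega)}\;\leq\;C\,\frac{\|g\|_{H^\sigma(\Omega)}}{\bigl|\log\bigl(C\|g\|_{H^\sigma(\Omega)}/\|A^\ast g\|_{H^s(W)}\bigr)\bigr|^{\nu}},
\]
with $\nu > 0$ depending on $s$, $n$ and the relative geometry of $W$ and $\Omega$. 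The Hahn--Banach step is then routine: for $v \in H^s_{\overline{\Omega}}$ and $\epsilon>0$, one minimises a regularised dual functional
\[
J_\epsilon(g) := \tfrac12\|A^\ast g\|_{H^s(W)}^2 + \tau_\epsilon\|g\|_{H^\sigma(\Omega)}^2 - \langle v,g\rangle_{L^2(\Omega)}
\]
with penalty $\tau_\epsilon \simeq e^{-C\epsilon^{-\mu}}$. The Euler--Lagrange equation for the unique minimiser $g_\epsilon$ produces, after a mollification ensuring $C_c^\infty(W)$ regularity, the control $f_\epsilon$; inverting the logarithmic modulus of the stability estimate converts the choice of $\tau_\epsilon$ into the approximation error $\epsilon$ and the exponential cost $\|f_\epsilon\|_{H^s(W)}\leq C\,e^{C\epsilon^{-\mu}}\|v\|_{L^2(\Omega)}$, with $\mu = 1/\nu$.

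The principal obstacle is Step 2, the logarithmic quantitative UC estimate for $L_s$. For the anisotropic nonlocal operator $L_s$ classical Carleman and frequency function techniques are not directly available, and no clean Caffarelli-Silvestre extension is known; the paper's method (ii), which uses the branch-cut singularities of $\sum_j|\xi_j|^{2s}$ in combination with quantitative analytic continuation in the spirit of \cite{Vessella99}, is precisely the new ingredient needed to bypass this difficulty. Once the logarithmic stability estimate is in hand, the remaining duality machinery is by now standard.
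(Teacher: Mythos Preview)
Your strategy is correct and matches the paper's: both reduce the theorem to a logarithmic quantitative unique continuation estimate for the adjoint solution (the paper's Proposition~\ref{prop:dual_UCP}), obtained via the branch-cut argument of method~(ii), and then pass to the Runge bound by standard duality. The only difference is in that last, essentially routine step: the paper follows \cite{RS17} and uses the singular value decomposition of the compact operator $A = i\, r_\Omega P_q$, bounding the singular values via the UC estimate, whereas you propose a Tikhonov-type variational minimisation; the two implementations are well known to be interchangeable.

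Two small bookkeeping points in your sketch, neither a genuine gap: the natural a priori bound for the adjoint solution is $\|\tilde w\|_{H^s}\lesssim \|g\|_{H^{-s}}$ rather than an $H^1$ bound (the paper upgrades regularity via Vishik--Eskin estimates to feed into the analytic continuation step), and $A^\ast g = -(L_s\tilde w)|_W$ naturally sits in $H^{-s}(W)$, so the norm in your stability estimate and in $J_\epsilon$ should be $H^{-s}(W)$ rather than $H^s(W)$ (of course $L_s\tilde w$ is smooth on $W$ by pseudolocality, so this only affects the constants).
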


This provides another method of quantifying the qualitative bounds which had been derived in, for instance, \cite{DSV14, DSV16, GSU16}. Previously quantifications of these had always relied on stability results for the Caffarelli-Silvestre extension \cite{RS17, RS17a}. Hence only special classes of nonlocal operators could be treated with these methods.

While the quantitative approximation result of Theorem \ref{thm:quant_Runge} is new, we emphasize that it is not our primary objective in this article to discuss new stability estimates for individual operators, but to present a \emph{unified} and relatively \emph{robust} framework for a number of interesting (constant coefficient, nonlocal) operators. In particular, partially, the stability properties of the operators which are discussed in the present article have been treated in the literature already -- yet often with methods which are problem specific and do not generalize easily to larger classes of nonlocal operators.  

\subsection{Stability by exploiting the stability of the moment problem}

The Hausdorff moment problem deals with the inverse problem of recovering a function $f$ in a suitable function space from its moments $\mu(f):=\{\int_{\R^n} f(x) x^{j} dx: \ j \in (\N\cup \{0\})^{n}\}=\{f_j\}_{j\in(\N\cup\{0\})^n}$. This problem is severely ill-posed in the sense that in general it is not solvable and, if it is solvable, it is only stable with a logarithmic modulus of continuity in suitable Sobolev spaces, see  \cite{AGVT04,Talenti87} and the discussion below. In the sequel, we show that the stability properties of certain integral operators can be reduced to a combination of this problem and quantitative unique continuation estimates by a Taylor expansion of the integral kernel. 

A typical sample result of this is the logarithmic stability estimate for the Hilbert transform $H: L^2(\R) \rightarrow L^2(\R)$, $f\mapsto Hf(x):=\frac{1}{\pi} \text{p.v.} \int\limits_{\R } \frac{f(y)}{x-y}dy$, which has the Fourier symbol $- i \sign(\xi)$. For this specific problem, the logarithmic stability estimates had previously been been derived in the literature with other methods, see \cite{APS14} and the subsequent work \cite{Rue17} as well as \cite{ADK16, ADK15}. For a further discussion we refer to Section \ref{sec:Hilbert} below.

\begin{thm}
\label{thm:qucHilbert_a}
Let $I$ and $J$ be two  open, bounded, non-empty subsets of $\R$ such that $\overline I\cap\overline J=\emptyset$. Then there exists $C=C(I,J)>0$ such that for any $f\in C^\infty_c(I)$
\begin{align*}
\|f\|_{L^2(I)}
&\leq C e^{C\frac{\| f \|_{H^1(I)}}{\| f\|_{L^2(I)}}}\|Hf\|_{L^2(J)}.
\end{align*}
\end{thm}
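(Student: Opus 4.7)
The plan is to follow the three-step scheme outlined in the introduction: (a) Taylor-expand the Cauchy kernel to express the holomorphic extension of $Hf$ in terms of the moments of $f$; (b) use quantitative analytic continuation to transfer smallness of $Hf|_J$ onto a curve enclosing $I$; (c) invoke the logarithmic stability of the Hausdorff moment problem to recover $f$ in $L^2(I)$.

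After a translation and rescaling I can assume $I \subset \{|y| < r\}$ for some $r > 0$ and that $J$ lies at positive distance from $\overline{I}$. Since $f \in C_c^\infty(I)$, the integral
\[
\mathcal{H} f(z) := \frac{1}{\pi} \int_I \frac{f(y)}{z - y}\, dy
\]
defines a holomorphic extension of $Hf$ to $\C \setminus \overline{I}$, agreeing with $Hf$ on $\R \setminus \overline{I}$. Expanding $\frac{1}{z-y}$ as a geometric series for $|z| > r$ gives
\[
\pi \mathcal{H} f(z) = \sum_{j=0}^{\infty} \frac{m_j(f)}{z^{j+1}}, \qquad m_j(f) := \int_I y^j f(y)\, dy,
\]
and the Cauchy formula yields $m_j(f) = \frac{1}{2\pi i}\oint_{|z| = R} \pi \mathcal{H} f(z)\, z^{j}\, dz$ for any $R > r$.

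To transfer smallness from $J$ to a large circle, I would combine the elementary a priori bound $\|\mathcal{H} f\|_{L^\infty(K')} \leq C\|f\|_{L^2(I)}$ on any compact $K' \subset \C \setminus \overline{I}$ (from Cauchy--Schwarz applied to the kernel) with a Vessella-type quantitative analytic continuation estimate in the holomorphic category (\cite{Vessella99}), to deduce, for any compact $K \subset \C \setminus \overline{I}$,
\[
\|\mathcal{H} f\|_{L^\infty(K)} \leq C \|H f\|_{L^2(J)}^{\beta}\|f\|_{L^2(I)}^{1-\beta}
\]
with $\beta = \beta(I,J,K) \in (0,1)$. Choosing $K = \{|z| = R\}$ with $R > r$ and inserting into the Cauchy representation gives the moment bounds $|m_j(f)| \leq C R^{j+1} \|Hf\|_{L^2(J)}^\beta \|f\|_{L^2(I)}^{1-\beta}$ for all $j \geq 0$.

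In the last step I would invoke a quantitative stability estimate for the Hausdorff moment problem (cf.~\cite{Talenti87, AGVT04}): for $g \in H^1(I)$ with $\|g\|_{H^1(I)} \leq M$ and $|m_j(g)| \leq \eta$ for $j = 0, \dots, N$, one has $\|g\|_{L^2(I)} \leq C(M N^{-\alpha} + e^{CN}\eta)$ for some $\alpha > 0$. Inserting the moment bounds from the previous step, optimizing $N$ to balance the two terms, and absorbing the factor $\|f\|_{L^2(I)}^{1-\beta}$ onto the left-hand side produces the stated exponential form of the estimate. The main obstacle is precisely this optimization: the geometric growth $R^{j+1}$ from Cauchy estimates competes against the exponential ill-conditioning $e^{CN}$ of the moment problem, and only a careful tracking of the Vessella exponent $\beta$ together with the geometric dependence of the moment-problem constants on $(I, J)$ produces the single logarithm encoded by $e^{C\|f\|_{H^1(I)}/\|f\|_{L^2(I)}}$ with the constants depending only on $(I, J)$.
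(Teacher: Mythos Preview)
Your approach is correct and shares the paper's three pillars---moment expansion of $Hf$, logarithmic stability of the Hausdorff moment problem, and quantitative analytic continuation---but the execution differs. After normalizing to $I\Subset(0,1)$, $J\Subset(1,\infty)$, the paper observes that the change of variables $z=x^{-1}$ turns $\pi Hf(x)=\sum_j f_j x^{-j-1}$ into the generating function $\sum_j f_j z^j$, so that $\big\|\sum_j f_j z^j\big\|_{L^2(I)}=\pi\|Hf\|_{L^2(I^{-1})}$. Their moment-problem lemma (Lemma~\ref{lem:moments} with $\gamma_j=1$) then directly gives $\|f\|_{L^2(I)}\le Ce^{C N_f}\big\|\sum_j f_j z^j\big\|_{L^2(I)}$ with $N_f\simeq\|f'\|_{L^2}/\|f\|_{L^2}$ already fixed, so no final optimization in $N$ is needed; the analytic-continuation step is purely \emph{real}-analytic (Vessella on $\conv(I^{-1}\cup J)\subset\R$), never entering the complex plane.

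Your route---Cauchy estimates on a circle followed by complex propagation from $J$---also works, with two caveats. First, Vessella's theorem is for real-analytic functions on real domains; propagating from $J\subset\R$ into $\C$ requires a different ingredient (a Hadamard three-circles or boundary-to-bulk argument of John type, as in the paper's Lemma~\ref{lem:analytcontJohn}). Second, your self-identified ``main obstacle'' is not one: the Cauchy factor $R^{j+1}=e^{(j+1)\log R}$ is itself exponential in $N$ and merges into the $e^{CN}$ from the moment problem, merely enlarging $C$. With $N\simeq\|f\|_{H^1}/\|f\|_{L^2}$ the optimization closes exactly as you sketch. The paper's version is slicker because the $z=x^{-1}$ trick replaces your individual moment bounds by a single $L^2$ identity, and keeps everything on the real line.
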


\begin{rmk}\label{rmk:Jmeasure}
In contrast to the results on the stability for the Hilbert transform which had previously been discussed in the literature, our result follows also if $J$ is only a measurable, bounded subset with positive measure and such that $\overline I\cap\overline J=\emptyset$. This observation holds true also for the other results which are presented in Part 1 of this article. This relies on the unique continuation results from \cite{Vessella99, AE13} which are presented in Lemma \ref{lem:analytcont} below.
\end{rmk}

\begin{rmk}
\label{rmk:relax}
As can be observed from the proof of Theorem \ref{thm:qucHilbert_a}, the results of Theorem \ref{thm:qucHilbert_a} (and also of the other results in Part \ref{part_1}) can be formulated for functions $f\in H^{1}(I)$ by introducing a sharp cut-off (which is also the original formulation of the results in \cite{APS14} and \cite{Rue17}). In the setting of Theorem \ref{thm:qucHilbert_a} this leads to the estimate
\begin{align}
\label{eq:truncatedHT}
\|f\|_{L^2(I)} \leq C e^{C \frac{\|f\|_{H^1(I)}}{\|f\|_{L^2(I)}}} \|H (f\chi_{I})\|_{L^2(J)} \mbox{ for } f\in H^1(I),
\end{align}
where $\chi_I$ denotes the indicator function of the interval $I$. Hence, \eqref{eq:truncatedHT} provides a quantitative stability result for the truncated Hilbert transform $H_{I,J}:=\chi_J H \chi_I$ (analogously truncated versions can be obtained for all other operators in Part \ref{part_1}).
\end{rmk}

Variations of Theorem \ref{thm:qucHilbert_a} include logarithmic stability estimates for perturbations of the Hilbert transform, such as the modified Hilbert transform, which arises in the study of water waves and for which no Carleman estimates and no quantitative unique continuation results are known (see Section ~\ref{sec:mod_Hilb}).
While this method is particularly powerful in one dimension, we emphasise that it can also be extended to higher dimensional settings. For instance, it allows us to deal with the Fourier-Laplace transform (see Section ~\ref{sec:moment_gen}). We refer to the specific sections for the precise statements and references to earlier works on these operators.

\subsection{Stability by exploiting the presence of branch-cut singularities in the symbols of the operators}

Our second method relies on a quantitative argument which in its  qualitative form had already been observed in different variations in  \cite[Lemma 3.5.4]{Isakov}, in \cite[Section ~5]{RS17a} and from a more microlocal perspective in \cite{L82}. These results are closely related to the antilocality of the operator under consideration, which plays an important role in the physics literature in the form of Reeh-Schlieder type theorems (see for instance \cite{V93} and the references therein). Following the book of Isakov, one version of these observations can be cast into the following rather general framework:

\begin{thm}[Lemma 3.5.4 in \cite{Isakov}]
\label{thm:Isakov}
Let $\mu_j$ with $j\in\{1,2\}$ be measures with $\supp(\mu_j) \subset B_r \subset \R^n$. Let $E \in \mathcal{S}'(\R^n)$. Assume that $\F E$ cannot be written as the sum of a meromorphic function (in $\C^n$) and a distribution supported on the zero set of some non-trivial entire function. Then, if $E\ast (\mu_1 -\mu_2) = 0$ in $\R^n \setminus B_r$, we have $\mu_1 = \mu_2 $ globally.
\end{thm}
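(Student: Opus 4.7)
The plan is to argue by contradiction: assume $\nu:=\mu_1-\mu_2\not\equiv 0$ and use the vanishing of $E\ast\nu$ outside $B_r$ to produce precisely the kind of decomposition of $\F E$ that the hypothesis forbids.

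Since $\nu$ is a compactly supported measure with $\supp\nu\subset\overline{B_r}$, the Paley-Wiener theorem gives that $\hat\nu$ extends to an entire function on $\C^n$, nontrivial precisely because $\nu\not\equiv 0$. By hypothesis $E\ast\nu$ vanishes on $\R^n\setminus B_r$, so it is itself a compactly supported distribution in $\overline{B_r}$, and the Paley-Wiener-Schwartz theorem implies that
\[ g:=\F(E\ast\nu) \]
also extends to an entire function on $\C^n$. Because $\hat\nu|_{\R^n}$ is smooth with all derivatives bounded (using $\supp\nu\subset\overline{B_r}$), the convolution identity
\[ (\F E)\cdot\hat\nu \;=\; g \quad\text{in }\mathcal{S}'(\R^n) \]
makes sense and holds.

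Now form the meromorphic function $m:=g/\hat\nu$ on $\C^n$, whose polar set lies in the zero set $Z:=\{\hat\nu=0\}$. Writing $Z_{\R}:=Z\cap\R^n$, the function $\hat\nu$ is nowhere vanishing and real-analytic on the open set $\R^n\setminus Z_\R$, so dividing the identity above yields $\F E=m$ as distributions on $\R^n\setminus Z_\R$. Consequently $\F E-m$ is supported in $Z_\R$, i.e. on the real trace of the zero set of the nontrivial entire function $\hat\nu$. This is exactly the decomposition that the hypothesis of the theorem excludes, yielding the desired contradiction and forcing $\mu_1=\mu_2$.

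The main obstacle is handling $Z_\R$ with sufficient care: the meromorphic function $m$ need not extend to a tempered distribution across its real pole set $Z_\R$, so the equation ``$\F E=m+T$'' must be given a meaning on all of $\R^n$. I would resolve this by fixing any distributional extension $\tilde m$ of $m|_{\R^n\setminus Z_\R}$ to $\R^n$ (for instance by multiplying $\hat\nu^{-1}$ by a smooth cutoff vanishing on a shrinking neighbourhood of $Z_\R$ and passing to a suitable limit, or by a local resolution of singularities on $Z_\R$); then $\F E-\tilde m$ is a tempered distribution supported on $Z_\R$. Reading the hypothesis in this correspondingly flexible sense, as Isakov does, the pair $(\tilde m,\F E-\tilde m)$ furnishes a forbidden representation of $\F E$ and the proof concludes.
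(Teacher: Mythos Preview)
The paper does not actually prove this statement: Theorem~\ref{thm:Isakov} is quoted verbatim as Lemma~3.5.4 from Isakov's book and is used only as background motivation (and once, at the very end, as a reference for qualitative injectivity). There is therefore no ``paper's own proof'' to compare against.

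That said, your argument is the standard one and is essentially how the result is proved in Isakov. The core steps---Paley--Wiener for $\hat\nu$, Paley--Wiener--Schwartz for $g=\F(E\ast\nu)$, the identity $(\F E)\hat\nu=g$, and then division to produce the meromorphic $m=g/\hat\nu$---are all correct and in the right order. You are also right to flag the one genuine subtlety: $m$ is only a function on $\C^n\setminus Z$, and to speak of ``$\F E - m$ supported on $Z_\R$'' one must first fix a distributional extension of $m$ across $Z_\R$. This is exactly where the statement of the theorem is deliberately loose (``the sum of a meromorphic function and a distribution supported on the zero set''), and Isakov's formulation is meant to be read in precisely the flexible sense you describe. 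So your proposal is correct, with the caveat you yourself identified.
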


In \cite{RS17a} we showed that, due to the presence of the branch-cut singularity in {the analytic continuation of the symbol $|\xi|^{2s}$ for $s\in \R \setminus \Z$}, Theorem ~\ref{thm:Isakov} applies, for instance, to the operator $(-\D)^s$. Building on a (qualitative) idea from \cite{L82} we here provide \emph{quantitative} variants of these type of results. In this context a sample result is the following:

\begin{thm}
\label{thm:s_gen_1D}
Let $s \in[ \frac{1}{2},1)$.
Let $I, J_1, J_2 \subset \R$ be  open, connected, bounded, non-empty intervals with $\overline{I}\cap \overline{J_j} = \emptyset$ and $J_1$ located to the left of $I$ and $J_2$ to the right of $I$. Let $J=J_1\cup J_2$. Then, there exist constants  $C>0$, $\mu>0$ such that for any $g \in C_c^{\infty}(I)$ we have
\begin{align*}
\|g\|_{H^{2s}(I)} \leq C e^{C \left(\frac{\|g\|_{H^{2s}(I)}}{\|g\|_{L^2(I)}} \right)^{\mu}} \||D|^{2s} g\|_{H^{-s}(J)}.
\end{align*}
\end{thm}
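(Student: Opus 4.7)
The proof follows method (ii) of the article, exploiting the branch-cut singularity of the symbol $|\xi|^{2s}$ for non-integer $s$. The overall strategy is to extend $|D|^{2s}g$, on each side of $I$, to a holomorphic function on a slit plane whose branch cut covers $\overline I$, to propagate its smallness from $J_1$, $J_2$ into the respective slit planes via quantitative analytic continuation, and then to read off quantitative information about $g$ from the resulting branch-cut jumps across $I$. For $g\in C_c^\infty(I)$ and $x\in\R\setminus\overline I$ the pointwise identity $|D|^{2s}g(x) = -c_{1,s}\int_I g(y)|x-y|^{-1-2s}\,dy$ holds, since $g(x)=0$. I would therefore introduce the two holomorphic extensions
\begin{align*}
u_r(z) := \int_I \frac{g(y)}{(z-y)^{1+2s}}\, dy, \qquad u_l(z) := \int_I \frac{g(y)}{(y-z)^{1+2s}}\, dy,
\end{align*}
with the principal branch of $w^{1+2s}$; then $u_r$ is holomorphic on $\C\setminus(-\infty,\sup I]$, $u_l$ on $\C\setminus[\inf I,\infty)$, and they coincide (up to the factor $-c_{1,s}$) with $|D|^{2s}g$ on $J_2$ and $J_1$, respectively. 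The a priori norm $\|g\|_{H^{2s}(I)}$ yields a uniform majorant for $u_r$, $u_l$ on compact subsets of their slit domains, which plays the role of the global a priori bound in the propagation step.

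I would then apply the quantitative analytic continuation result (Lemma~\ref{lem:analytcont}) on each slit plane separately: the datum $\||D|^{2s}g\|_{H^{-s}(J)}$ controls $u_r|_{J_2}$ and $u_l|_{J_1}$, and propagation of smallness yields a logarithmic bound for $u_r$ and $u_l$ on compact subsets, in particular in tubular neighbourhoods of $I$ approached from both the upper and lower half-planes. Passing to the branch-cut boundary values $u_r^\pm, u_l^\pm$ on $I$, the explicit principal-branch computation yields, in the distributional sense on $I$,
\begin{align*}
[u_l^+(x)-u_l^-(x)] - [u_r^+(x)-u_r^-(x)] = 2i\sin(\pi(1+2s))\int_I \frac{g(y)}{|x-y|^{1+2s}}\,dy.
\end{align*}
Up to constants and lower-order contributions, the right-hand side is precisely $|D|^{2s}g|_I$, so the logarithmic bound on $u_r^\pm$, $u_l^\pm$ furnishes a logarithmic bound on $\||D|^{2s}g\|_{H^{-s}(I)}$. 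Combined with the data bound on $J$ this yields a logarithmic bound on $\||D|^{2s}g\|_{H^{-s}(\R)}$; by Plancherel this is equivalent to $\|g\|_{H^{s}(\R)}$, which, since $g$ is supported in $\overline I$, controls $\|g\|_{H^s(I)}$. Interpolating between this bound and the a priori norm $\|g\|_{H^{2s}(I)}$, and then inverting the resulting logarithmic modulus, produces the exponential factor $\exp(C(\|g\|_{H^{2s}(I)}/\|g\|_{L^2(I)})^\mu)$ in the claimed estimate.

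The main technical obstacle lies in the jump-formula step for $s\in[1/2,1)$, where the Riesz-type integral $\int_I g(y)|x-y|^{-1-2s}dy$ is hypersingular on $I$ and must be interpreted distributionally (via a finite-part regularization, or equivalently by subtracting suitable Taylor polynomials of $g$). Tracking this identity in $H^{-s}(I)$, controlling the boundary traces of the holomorphic extensions $u_r, u_l$ in these negative-order Sobolev norms, and combining the bounds from the two slit planes against the a priori $H^{2s}(I)$ norm so that the interpolation produces a clean exponent is where the technical work concentrates and where the constant $\mu$ in the statement is determined.
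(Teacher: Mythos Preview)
Your approach is a genuinely different (physical-space) realisation of the same branch-cut idea. The paper works on the Fourier side: it introduces comparison symbols $P_s^1(\xi), P_s^2(\xi)$ that are analytic continuations of $|\xi|^{2s}$ into the upper and lower complex $\xi$-half-planes, sets $h_j = (|D|^{2s}-P_s^j(D))g$, uses a Paley--Wiener argument (Theorem~\ref{thm:analytic_supp}) to see that $P_s^j(D)g$ is supported to one side of $I$ (so $h_j=|D|^{2s}g$ on $J_j$), and notes that $h_j(x)=\int_{\R_\pm}e^{ix\xi}|\xi|^{2s}(1-e^{-2\pi is})\hat g(\xi)\,d\xi$ extends analytically to a half-plane in $x$. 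The quantitative propagation is Theorem~\ref{thm:analytic1D} (boundary-to-bulk for harmonic functions, iterated three-balls, optimisation in the distance to the real axis), and the recombination step is simply $h_1+h_2=(1-e^{-2\pi i s})|D|^{2s}g$, from which $\|g\|_{H^s}$ is recovered by testing against $g$ itself. Your route instead analytically continues the Riesz kernel in the spatial variable, obtains slit-plane-holomorphic functions $u_r,u_l$, and reads off information on $g$ from their branch-cut jumps; your jump identity is the physical-space counterpart of the paper's sum $h_1+h_2$. The chief practical advantage of the Fourier-side version is that it never meets the hypersingular kernel: all identities and norm bounds live at the level of Fourier multipliers and $L^2$-based Sobolev spaces, so the finite-part issue you flag as the main obstacle is absent.

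Two points in your outline would need repair. First, Lemma~\ref{lem:analytcont} is a real-variable result and does not do the job of propagating from a boundary interval $J_j$ into the interior of a complex (slit) domain up to the cut; you need an analogue of Theorem~\ref{thm:analytic1D}, i.e.\ a boundary-to-bulk step plus three-balls iteration with a careful count of balls as one approaches the cut, adapted (or conformally transplanted) to your slit geometry. Second, your passage ``combined with the data bound on $J$ this yields a logarithmic bound on $\||D|^{2s}g\|_{H^{-s}(\R)}$'' is not valid as stated, since $|D|^{2s}g$ is not supported in $I\cup J$. The clean way to close the loop, and what the paper does, is the duality inequality
\[
\||D|^{2s}g\|_{H^{-s}(I)}\ \ge\ \frac{(|D|^{2s}g,g)_{L^2(\R)}}{\|g\|_{H^s(I)}}\ =\ \frac{\|g\|_{\dot H^s(\R)}^2}{\|g\|_{H^s(I)}}\ \ge\ c\,\|g\|_{H^s(I)},
\]
which uses only that $g$ is supported in $\overline I$ and the fractional Poincar\'e inequality; no control on $\R\setminus(I\cup J)$ is needed.
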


This result had previously been already deduced in \cite{RS17} in $n$-dimensions relying on Carleman estimates for the Caffarelli-Silvestre extension. The main novelty here is the substantially simplified method of proof of this result which relies on analytic propagation and ``comparison-type arguments''. While not being as robust as Carleman estimates -- which, for instance, allow for extensions to variable coefficient settings -- for constant coefficient operators the method seems to allow for substantially simpler arguments.

While in this article, by the described second method, we mainly provide quantitative results for the fractional Laplacian in the one-dimensional setting, these can also be extended to yield new quantitative results on operators in higher dimensions, in particular on (not necessarily elliptic) combinations of operators involving local and pseudodifferential ingredients:

\begin{thm}
\label{thm:s_gen_nD}
Let $I, J_1,J_2, J \subset \R$ be as in Theorem ~\ref{thm:s_gen_1D} and let $Q\subset\R^{n-1}$ be an open, bounded set.
Consider the subsets
$\mathcal I= Q\times I \subset \R^n$ and $\mathcal J=Q\times J \subset \R^n$ and let
\begin{align}
P(D)=|D_{x_n}|^{2s}+L(D)+m(D')
\end{align}
where $s\in [\frac{1}{2},1)$, $L(D)$ is a constant coefficient local operator  and $m(D')$ is a pseudodifferential operator in the derivatives $\partial_{x_1},\dots,\partial_{x_{n-1}}$ such that $m(D')g|_{\mathcal J}= 0$ for all functions which are compactly supported in $\mathcal I$. Then, there exist $C>0, \mu>0$ such that for  any $g \in C_c^{\infty}(\mathcal I)$ we have
\begin{align*}
\|g\|_{H^{2s}(\mathcal{I})} \leq C e^{C \left(\frac{\|g\|_{H^{2s}(\mathcal I)}}{\|g\|_{L^2(\mathcal I)}} \right)^{\mu}} \|P(D) g\|_{H^{-s}(\mathcal J)}.
\end{align*}
\end{thm}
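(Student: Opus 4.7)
The plan is to adapt the one-dimensional branch-cut argument underlying Theorem \ref{thm:s_gen_1D} to the $n$-dimensional setting by slicing in the tangential variable and then reassembling. The first step is a structural simplification: for $g \in C_c^\infty(\mathcal I)$, the local operator $L(D)$ sends $g$ to a function supported in $\overline{\mathcal I}$, and since $\overline{\mathcal I}\cap \mathcal J = Q\times(\overline I\cap J) = \emptyset$ by the hypothesis $\overline I\cap\overline J=\emptyset$, we have $L(D)g|_{\mathcal J}=0$; combined with the standing assumption $m(D')g|_{\mathcal J}=0$ this reduces the task to proving
\[
\|g\|_{H^{2s}(\mathcal I)} \leq C\, e^{CM^\mu}\,\||D_{x_n}|^{2s}g\|_{H^{-s}(\mathcal J)},\qquad M:=\frac{\|g\|_{H^{2s}(\mathcal I)}}{\|g\|_{L^2(\mathcal I)}}.
\]

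Next I would take the partial Fourier transform $\tilde g(\xi',x_n):=\mathcal{F}_{x'}g(\xi',x_n)$ in the tangential variables; because $g$ is compactly supported in $Q\times I$, for each fixed $\xi'\in\R^{n-1}$ the slice $\tilde g(\xi',\cdot)$ lies in $C_c^\infty(I;\C)$. Since $|D_{x_n}|^{2s}$ commutes with $\mathcal{F}_{x'}$, the one-dimensional Theorem \ref{thm:s_gen_1D} (extended to complex-valued data by splitting into real and imaginary parts) gives a $\xi'$-parametrized family of estimates
\[
\|\tilde g(\xi',\cdot)\|_{L^2(I)}^2 \leq C\,e^{2CM(\xi')^\mu}\,\||D_{x_n}|^{2s}\tilde g(\xi',\cdot)\|_{H^{-s}(J)}^2,
\]
where $M(\xi'):=\|\tilde g(\xi',\cdot)\|_{H^{2s}(I)}/\|\tilde g(\xi',\cdot)\|_{L^2(I)}$.

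To reassemble the $n$-dimensional estimate I would integrate over $\xi'$: by Plancherel in $x'$ the left-hand side assembles to $(2\pi)^{n-1}\|g\|_{L^2(\mathcal I)}^2$, and to pull out a single global exponential in place of the slice-dependent $e^{2CM(\xi')^\mu}$ I would apply Jensen's inequality with the probability measure proportional to $B(\xi')\,d\xi'$, where $B(\xi'):=\||D_{x_n}|^{2s}\tilde g(\xi',\cdot)\|_{H^{-s}(J)}^2$, exploiting the concavity of $t\mapsto t/(\log t)^{2/\mu}$ for $t$ large. The resulting global exponent is controlled by the ratio $\int\|\tilde g(\xi',\cdot)\|_{H^{2s}(I)}^2\,d\xi'/\int B(\xi')\,d\xi'$; Plancherel in $x'$ together with the Fourier weight inequality $(1+\xi_n^2)^{2s}\leq(1+|\xi|^2)^{2s}$ bounds the numerator by $\|g\|_{H^{2s}(\mathcal I)}^2$.

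The main obstacle I expect is the opposite-direction comparison required for the denominator: the integrated mixed norm $\int B(\xi')\,d\xi' = \||D_{x_n}|^{2s}g\|_{L^2_{x'}H^{-s}_{x_n}(\mathcal J)}^2$ is a priori larger than the isotropic $\||D_{x_n}|^{2s}g\|_{H^{-s}(\mathcal J)}^2$ appearing on the right-hand side of the statement, since the Fourier weight $(1+\xi_n^2)^{-s}\geq(1+|\xi|^2)^{-s}$ runs the wrong way. The gap must be absorbed into the exponential factor by paying additional powers of the a priori ratio $M$ (using the compact support of $g$ in the tangential directions and a Paley--Wiener type pointwise bound on $\tilde g(\xi',\cdot)$ in $\xi'$). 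Executing this absorption cleanly is the technical heart of the argument and is the reason why the final exponent $\mu$ is claimed only as some positive constant rather than sharp.
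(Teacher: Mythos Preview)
Your reduction in the first paragraph is correct and matches the paper. The divergence comes in the second step: you slice in the \emph{frequency} variable $\xi'$ and try to reassemble via Jensen, whereas the paper works directly in $n$ dimensions by defining $h_j(x)=\big(|D_{x_n}|^{2s}-P_s^j(D_{x_n})\big)g(x)$ on $\R^n$ and invoking an $n$-dimensional quantitative analytic continuation result (Theorem~\ref{thm:analyticnD}). That theorem is itself proved by slicing---but in the \emph{physical} variable $x'\in Q$, not in $\xi'$: for each fixed $x'$ one applies the boundary-to-bulk estimate and the three-balls inequality, and then integrates over $x'\in Q$ using H\"older's inequality. Because the three-balls exponent $\alpha$ is uniform in $x'$ and H\"older with exponents $\alpha,1-\alpha$ reassembles the product, one lands directly on the isotropic norm $\|h\|_{H^{-s}(\mathcal J)}$ with no mixed-norm loss.

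Your route, by contrast, runs into exactly the obstacle you flag: the reassembled right-hand side is the anisotropic quantity $\int_{\R^{n-1}}\||D_{x_n}|^{2s}\tilde g(\xi',\cdot)\|_{H^{-s}(J)}^2\,d\xi'$, which is an $L^2_{\xi'}H^{-s}_{x_n}$ norm and sits on the wrong side of the isotropic $H^{-s}(\mathcal J)$ norm. Your proposed fix (``absorb via Paley--Wiener into the exponential'') is not fleshed out and is genuinely nontrivial: the slice-ratio $M(\xi')$ has no uniform-in-$\xi'$ control, and the concavity needed for Jensen only holds where $D(\xi')/B(\xi')$ is large, which you have not verified. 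So as written this is a gap, not a completed argument. The paper's physical-space slicing sidesteps the issue entirely and is the cleaner path; if you want to salvage the frequency-slicing approach you would need a separate argument comparing $\|P(D)g\|_{L^2_{x'}H^{-s}_{x_n}(\mathcal J)}$ with $\|P(D)g\|_{H^{-s}(\mathcal J)}$, and it is not clear this can be done without losing more than the exponential prefactor can absorb.
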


Similarly as in the qualitative results in \cite{DSV16,RS17a} we do not require any ellipticity/parabolicity on these operators. In future work we seek to extend our methods to quantitative settings involving the $n$-dimensional fractional Laplacian and related operators.

\subsection{Organisation of the remaining article} The remaining article is organised into two main parts: On the one hand, Part \ref{part_1} which includes Sections ~\ref{sec:moment_aux}-\ref{sec:moment_gen} deals with the ideas and results obtained through the stability of the moment problem. Here, for instance, we provide the proof of Theorem ~\ref{thm:qucHilbert_a}.
On the other hand, in Part \ref{part_2}, which consists of Sections ~\ref{sec:aux}-\ref{sec:applic}, we present the ideas, results and applications of the method exploiting the branch-cut singularities of the underlying symbols. As examples of this strategy we explain the arguments for Theorems ~\ref{thm:quant_Runge}, ~\ref{thm:s_gen_1D} and ~\ref{thm:s_gen_nD}.

\subsection{Notational conventions}
\label{subsec:conv}
In the sequel, we will use the following conventions: We denote the Fourier transform by
\begin{align*}
\F f(\xi) = \int\limits_{\R^n} f(x) e^{-ix\cdot \xi} dx
\end{align*}
for $f\in \mathcal{S}$.
We  also use the notation $\hat f=\mathcal F f$. We further set $D_{x_j}:= i \frac{\p}{\p x_j}$; in one dimension we also omit the subindex and simply write $D$.

For $s\in \R$, the whole space Sobolev spaces are denoted by 
\begin{align*}
H^s(\R^n):=\{f\in \mathcal{S'}(\R^n): \|(1+|\cdot|^2)^{\frac s 2}\hat f \|_{L^2(\R^n)} < \infty\},
\end{align*}
and the homogeneous version by
\begin{align*}
\dot H^s(\R^n):=\{f\in \mathcal{S'}(\R^n): \||\cdot|^s \hat f \|_{L^2(\R^n)} < \infty\}.
\end{align*}
Let $\Omega\subset \R^n$ be an open set, then we define
\begin{align*}
H^s(\Omega)&:=\{f|_\Omega: f\in H^s(\R^n)\},\\
\dot H^s(\Omega)&:=\{f|_\Omega: f\in \dot H^s(\R^n)\},\\
\tilde H^s(\Omega)&:=\mbox{ closure of } C^\infty_c(\Omega) \mbox{ in } H^s(\R^n),\\
 H^s_{\overline{\Omega}}&:=\{f\in H^s(\R^n): \supp f\subset \overline{\Omega}\}.
\end{align*}
For any $s\in\R$ we have
\begin{align*}
(H^s(\Omega))^*=\tilde H^{-s}(\Omega), \ \ (\tilde H^s(\Omega))^*= H^{-s}(\Omega).
\end{align*}
If in addition $\Omega$ is a bounded Lipschitz domain, the following identifications hold: 
\begin{align*}
H^s_{\overline\Omega}&=\tilde H^s(\Omega), \ s\in\R.
\end{align*}

In order to denote that a quantity $a$ is chosen of the order of another quantity $b$, we use the notation $a \simeq b$ by which we mean that there exist a constant $C>1$ (which does not depend on any relevant parameters) such that 
\begin{align*}
C^{-1} a \leq b \leq C a.
\end{align*}

We denote by $\N_0$ the set of natural numbers including zero, i.e. $\N_0:=\N\cup\{0\}$.

Given a bounded subset $I\subset\R^n$, we define $I^{-1}:=\{x\in\R^n: x^{-1}\in I\}$. Moreover, if $I=\{x\in\R:|x-x_0|\leq \ell\}$, for any $k>0$  we define $kI:=\{x\in\R:|x-x_0|\leq k\ell\}$.
If $I\subset \R$, we also use the notation $\mathcal I=Q\times I\subset \R^n$, where $Q\subset \R^{n-1}$ is a bounded subset. Then, for any $k>0$, we set $k\mathcal I:= Q\times kI$.
We denote by $\conv(J)$ the convex hull of  any subset $J\subset \R^n$, i.e.  the smallest convex set that contains $J$. By writing $I_1 \Subset I_2$ for $I_1, I_2 \subset \R^n$ open and $I_1$ bounded, we mean that $I_1$ is compactly contained in $I_2$, i.e. $\overline{I}_1 \subset I_2$, where $\overline{I_1}$ denotes the closure of $I_1$.

We use the notation $\R_+=[0,\infty)$ and $\R_-=(-\infty, 0]$.
Moreover, $\R^n_\pm=\R^{n-1}\times \R_\pm$.
Finally, we use the following notation for the balls in $\C^n$: for any $x_0\in\C^n$ and $r>0$, let $B_r(x_0):=\{x\in\C^n: |x-x_0|<r\}$.
Moreover, let $\C_\pm=\{z\in\C: \pm\im z\geq0\}$.
If $x_0\in\R^n$, we define  $B^\pm_r(x_0):=B_r(x_0)\cap (\C_\pm)^n=\{x\in (\C_\pm)^n: |x-x_0|\leq r\} $. If $x_0=0$, the dependence on $x_0$ is omitted.

\part{Stability Estimates by Exploiting the Hausdorff Moment Problem}
\label{part_1}

In this part we discuss our first strategy of deducing quantitative stability estimates for a class of nonlocal operators by reducing these to stability estimates for the moment problem. As examples of this method we present one-dimensional examples such as the Hilbert transform or the modified Hilbert transform, but also show that this is applicable to multi-dimensional problems such as the higher dimensional Fourier and Fourier-Laplace transforms.

\section{Auxiliary Results on the Moment Problem}
\label{sec:moment_aux}

\subsection{On moments}
In this section we recall the stability of the moment problem and introduce corresponding dual estimates. Later,  we  apply these results to bound the $L^2$-norm of a function by the sum of monomials weighted with the moments.

The  moment of order $j\in\N_0$ of a real-valued function $f$ supported on $[0,1]$ is the integral of the product of $f$ with the monomial $x^j$: 
\begin{align*}
f_j&=\int_{0}^1 x^j f(x) dx.
\end{align*}
In higher dimensions,  given   $f$ supported in $[0,1]^n$, the (generalized) moment of order  $j=(j_1,\dots,j_n)\in\N_0^n$ is the integral of the product of $f$ with  $x^j=x_1^{j_1}\dots x_n^{j_n}$: 
\begin{align}\label{eq:defmom}
f_j&=\int_{(0,1)^n} x^j f(x) dx.
\end{align}

The (Hausdorff) moment problem, which consists of finding the function $f\in L^2([0,1]^n)$ given its moments $\{f_j\}_{j\in\N_0^n}$, is ill-posed. This is related to the fact that the basis given by the monomials $\{x^j\}_{j\in\N_0^n}$ is not orthogonal.
A necessary and sufficient condition for a sequence $\{f_j\}_{j\in\N_0^n}$ to be the moments of an $L^2([0,1]^n)$ function is (see, for instance, \cite[Theorem 4.1]{AGVT04})  that
\begin{align}\label{eq:condmoments}
\sum_{k\in\N_0^n}\Bigg(\sum_{\substack{j\in\N_0^n\\ j_i\leq k_i}}C_{k_1,j_1}\dots C_{k_n,j_n} f_j\Bigg)^2<\infty,
\end{align}
where the constants $C_{m,\ell}$ are the coefficients of the Legendre polynomials, i.e. $L_{m}(t)=\displaystyle{\sum_{\ell=0}^{m} C_{m, \ell}} t^{\ell}$, which are given by
\begin{align}\label{eq:Legcoeff}
C_{m,\ell}=(2m+1)(-1)^{\ell}\frac{(m+\ell)!}{(m-\ell)!(\ell!)^2}, \quad 0\leq\ell\leq m.
\end{align}
In addition, even if there exist solutions to the moment problem, with respect to standard Sobolev spaces, the inverse of the map $f \mapsto \{f_j\}$ does not depend Lipschitz continuously on the data. Indeed,  if the solution $f\in H^1([0,1]^n)$, we have the following stability estimate  (\cite[Theorem 1]{Talenti87}  for $n=1$ and  \cite[Theorems 4.1]{AGVT04} for higher dimensions):
\begin{align}\label{eq:festmom}
\|f\|_{L^2([0,1]^n)}^2\leq e^{3.5n(N+1)}\sum_{j_1,\dots,j_n=0}^N|f_j|^2+\frac{\|\nabla f\|_{L^2([0,1]^n)}^2}{4(N+1)^2}.
\end{align}
The exponential dependence on $N$ of the first term originates from the relation between the expansion of $f$ up to degree $N$ in the Legrendre basis and in the monomial basis, which is given by a Hilbert matrix of size $N+1$ (see \cite{Talenti87} and the discussion below).

For later use, we record a translated and rescaled version of the estimate \eqref{eq:festmom}:

\begin{lem}
\label{rmk:festmom2}
Let $I=I_1\times \cdots \times I_n\subset(0,1)^n$ with $I_i$ being connected intervals in $(0,1)$ for $i=1, \dots,n$. Then an analogue to \eqref{eq:festmom} holds for any function $f\in H^1(I)$ in the sense that
\begin{align}\label{eq:festmom2}
\|f\|_{L^2(I)}^2\leq e^{C(N+1)}\sum_{j_1,\dots,j_n=0}^N|\tilde f_j|^2+\frac{\|\nabla f\|_{L^2(I)}^2}{4(N+1)^2},
\end{align}
where $C=C(|I|, n)= 6.5n-2\log |I|>0$ and $\tilde f_j=(f\chi_I)_j=\int_I f(x)x^j dx$.  
\end{lem}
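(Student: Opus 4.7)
The plan is to reduce the estimate to the already established bound \eqref{eq:festmom} on the reference box $(0,1)^n$ via an affine change of variables that maps $I$ onto $(0,1)^n$, and then to convert the moments produced by this rescaling back into the unshifted moments $\tilde f_j$ through a binomial expansion.

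Writing $I_i = (a_i, b_i)$, $\ell_i = b_i - a_i$, $a=(a_1,\dots,a_n)$ and denoting by $\ell\odot y$ the componentwise product $(\ell_1 y_1,\dots,\ell_n y_n)$, I would first introduce $g\colon (0,1)^n \to \R$ by $g(y) := f(a + \ell\odot y)$. A direct change of variables yields $\|g\|_{L^2((0,1)^n)}^2 = |I|^{-1}\|f\|_{L^2(I)}^2$, and, since $\ell_i\leq 1$, also $\|\nabla g\|_{L^2((0,1)^n)}^2 \le |I|^{-1}\|\nabla f\|_{L^2(I)}^2$. Applying \eqref{eq:festmom} to $g$ and multiplying through by $|I|$ therefore reduces the problem to controlling the moments $g_j$ of $g$ on $(0,1)^n$ in terms of the $\tilde f_k$.

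Reversing the change of variables in $g_j = \int_{(0,1)^n} y^j g(y)\,dy$ gives $g_j = |I|^{-1}\prod_i \ell_i^{-j_i}\int_I (x-a)^j f(x)\, dx$. Expanding $(x-a)^j = \sum_{k\le j}\binom{j}{k}(-a)^{j-k}x^k$ by the multidimensional binomial theorem and then applying Cauchy--Schwarz together with $a_i<1$ and the identity $\sum_{k_i=0}^{j_i}\binom{j_i}{k_i}^2 = \binom{2j_i}{j_i}\le 4^{j_i}$, I would obtain $|g_j|^2 \le |I|^{-2}\prod_i \ell_i^{-2j_i}\cdot 4^{|j|}\sum_{k\le j}|\tilde f_k|^2$. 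Using $\ell_i \le 1$ and $j_i \le N$ to bound $\prod_i \ell_i^{-2j_i} \le |I|^{-2N}$ and then summing over the $(N+1)^n$ multi-indices produces $\sum_j|g_j|^2 \le (N+1)^n |I|^{-2(N+1)} 4^{nN}\sum_k|\tilde f_k|^2$.

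Inserting this bound into the rescaled version of \eqref{eq:festmom} leaves a coefficient of $|I|^{-(2N+1)}(N+1)^n 4^{nN} e^{3.5n(N+1)}$ in front of $\sum_k |\tilde f_k|^2$. The remaining -- and essentially only -- task is to verify that, after using $(N+1)^n \le e^{n(N+1)}$ and writing $4^{nN} = e^{n\log 4\, N}$ and $|I|^{-(2N+1)} \le e^{-2(N+1)\log |I|}$ (valid since $|I|<1$), this coefficient is dominated by $e^{(6.5n - 2\log|I|)(N+1)}$; the buffer is comfortable since $3.5 + 1 + \log 4 < 6.5$. The main obstacle is thus purely this bookkeeping of combinatorial constants and confirming they fit inside the stated exponent -- the analytic content rests entirely on the scaling argument and the reference estimate \eqref{eq:festmom}.
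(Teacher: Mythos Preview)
Your proof is correct and follows essentially the same route as the paper: both introduce the affine map sending $(0,1)^n$ to $I$, apply \eqref{eq:festmom} to the pulled-back function, and then convert the resulting moments into the $\tilde f_j$ via the binomial expansion with a Cauchy--Schwarz bound, arriving at the same combinatorial factor $(N+1)^n 4^{nN}|I|^{-2(N+1)}$ which the paper simply records as $e^{3n(N+1)}|I|^{-2(N+1)}$. The only cosmetic difference is that you make the Vandermonde identity $\sum_k\binom{j}{k}^2=\binom{2j}{j}$ explicit, whereas the paper states the bound directly.
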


\begin{proof}
The proof of \eqref{eq:festmom2} relies on rescaling and translating the bound \eqref{eq:festmom}.
Given $I=I_1\times\dots\times I_n\subset (0,1)^n$, let $A$ be a diagonal $n\times n$ matrix  and $x_0\in (0,1)^n$  such that $\{Ax+x_0: x\in (0,1)^n\}=I$. Notice that $A_{ii}=|I_i|$ for $i=1, \dots, n$.
Let $f\in H^1(I)$ and consider a translated and rescaled version of it, $F(x)=f(Ax+x_0): [0,1]^n \rightarrow \R$. We apply \eqref{eq:festmom} to $F(x)$ which yields that
\begin{align}
\label{eq:f_est_aux}
\|F\|_{L^2([0,1]^n)}^2\leq e^{3.5n(N+1)}\sum_{j_1,\dots,j_n=0}^N|F_j|^2+\frac{\|\nabla F\|_{L^2([0,1]^n)}^2}{4(N+1)^2}.
\end{align}
We now seek to return from the estimate for $F$ to an estimate for the original function $f$. To this end, we observe the following identities:
\begin{align*} 
\|F\|_{L^2((0,1)^n)}&=\frac{1}{|\det A|^{\frac{1}{2}}} \|f\|_{L^2(I)},\\
\|\nabla F\|_{L^2((0,1)^n)}&=\frac{1}{|\det A|^{\frac{1}{2}}} \|A\nabla f\|_{L^2(I)}\leq \frac{1}{|\det A|^{\frac{1}{2}}} \|\nabla f\|_{L^2(I)},\\
F_j&=\int_{(0,1)^n} F(x)x^j dx=\frac{1}{|\det A|}\int_I f(y) (A^{-1}(y-x_0))^j dy \\
&=\frac{1}{|\det A|}\sum_{\substack{0\leq k_i\leq j_i\\i=1,\dots,n}} {{j}\choose{k}}|I_1|^{-j_1}\dots |I_n|^{-j_n}(-x_0)^{j-k}\tilde f_k,
\end{align*}
where ${{j}\choose{k}}={{j_1}\choose{k_1}}\dots {{j_n}\choose{k_n}}$.
By virtue of the last identity, we have in addition
\begin{align*}
\sum_{j_1,\dots,j_n=0}^{N}|F_j|^2\leq(N+1)^n  2^{2nN}|I|^{-2(N+1)}\sum_{j_1,\dots,j_n=0}^{N}|\tilde f_j|^2\leq e^{3n(N+1)}|I|^{-2(N+1)}\sum_{j_1,\dots,j_n=0}^{N}|\tilde f_j|^2.
\end{align*}
Inserting these bounds into \eqref{eq:f_est_aux}, we therefore obtain
\begin{align*}
\|f\|_{L^2(I)}^2\leq e^{C(N+1)}\sum_{j_1,\dots,j_n=0}^N|\tilde f_j|^2+\frac{\|\nabla f\|_{L^2(I)}^2}{4(N+1)^2},
\end{align*}
which yields \eqref{eq:festmom2} with the claimed dependence in the constant $C>0$.
\end{proof}

With a similar argument as for \eqref{eq:festmom}, one further obtains the following bound:

\begin{lem}
\label{lem:moment_low}
Let $\{a_j\}_{j\in\N_0^n}\in \ell^2$ and $g(x) = \sum\limits_{j\in\N_0^n} a_j x^j\in H^1([0,1]^n)$. Then,
\begin{align}\label{eq:moment_low}
\|g\|_{L^2([0,1]^n)}^2 \geq  e^{-3.5n (N+1)} \sum\limits_{j_1,\dots, j_n=0}^N  |a_j|^2 - \frac{\|\nabla g\|_{L^2([0,1]^n)}^2}{4(N+1)^2}.
\end{align}
\end{lem}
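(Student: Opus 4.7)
The plan is to adapt Talenti's argument for the upper bound \eqref{eq:festmom} (cf.\ \cite{Talenti87, AGVT04}) with the roles of the monomial coefficients and the moments interchanged; the multidimensional version will then follow by the same tensorization / rescaling used in the proof of Lemma~\ref{rmk:festmom2}, which is responsible for the factor $n$ in the exponent $3.5n(N+1)$.

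In one dimension, the core of the argument is to pass through the $L^2([0,1])$-orthonormal basis of normalized shifted Legendre polynomials $\{\tilde L_m\}_{m\in\N_0}$. Expanding $g=\sum_m \hat g_m\tilde L_m$ gives $\|g\|_{L^2}^2=\sum_m|\hat g_m|^2$, while the Sturm--Liouville structure of $\tilde L_m$ produces the standard tail estimate $\sum_{m>N}|\hat g_m|^2\le \|g'\|_{L^2}^2/(4(N+1)^2)$, which is exactly the source of the gradient correction in \eqref{eq:moment_low}. Writing $\tilde L_m(x)=\sum_{\ell=0}^m \tilde C_{m,\ell}x^\ell$ and matching powers of $x$ in the identity $\sum_m\hat g_m\tilde L_m=\sum_j a_j x^j$ yields the triangular relation $a_j=\sum_{m\ge j}\tilde C_{m,j}\hat g_m$. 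I would then bound $\sum_{j=0}^N |a_j|^2$ from above by splitting each $a_j$ into its contribution from the low Legendre modes ($m\le N$) and from the high Legendre modes ($m>N$). The low-mode part is controlled by the squared norm of the $(N+1)\times(N+1)$ triangular block of $(\tilde C_{m,j})$; via Stirling applied to \eqref{eq:Legcoeff} this is precisely the combinatorial quantity providing the factor $e^{3.5(N+1)}$ in Talenti's proof, and paired with $\sum_{m\le N}|\hat g_m|^2\le \|g\|_{L^2}^2$ it delivers the main term. The high-mode part is absorbed into $\|g'\|_{L^2}^2/(4(N+1)^2)$ by means of a weighted Cauchy--Schwarz against the tail estimate for $\hat g_m$; rearranging the resulting inequality produces \eqref{eq:moment_low}.

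The principal obstacle I anticipate is exactly this high-mode step: the entries $\tilde C_{m,j}$ grow exponentially in $m$, so absorbing the contribution of the high Legendre modes into a $(N+1)^{-2}$-scaled gradient norm requires a careful calibration of the weighted Cauchy--Schwarz, tuned to the specific growth of $\tilde C_{m,j}$, so that the $(m+1)^{-2}$ weight coming from the $H^1$/Legendre duality actually dominates -- analogously to the way Talenti's proof of \eqref{eq:festmom} handles the dual tail via a Bessel-type inequality. Once the one-dimensional bound is in place, the $n$-dimensional version follows by tensorization: the Legendre basis on $[0,1]^n$ tensorizes, the change-of-basis matrix becomes the Kronecker product of its one-dimensional versions, the gradient tail splits coordinatewise, and the exponential factor accumulates to $e^{-3.5n(N+1)}$, paralleling the scaling computation in Lemma~\ref{rmk:festmom2}.
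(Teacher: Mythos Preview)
Your instinct about the high-mode step is correct, and in fact it is not merely an obstacle to be ``calibrated'' around: it is fatal, because the statement of Lemma~\ref{lem:moment_low} as written is false. Take $n=1$, fix $N=1$, and let $g(x)=(1-x)^M$ for large $M$. Then $a_0=1$, $a_1=-M$, so $\sum_{j\le 1}|a_j|^2=1+M^2$, while $\|g\|_{L^2}^2=(2M+1)^{-1}$ and $\|g'\|_{L^2}^2=M^2/(2M-1)\sim M/2$. The right-hand side of \eqref{eq:moment_low} is then $e^{-7}(1+M^2)-\tfrac{M^2}{16(2M-1)}\sim e^{-7}M^2$, which tends to $+\infty$, whereas the left-hand side tends to $0$. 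The same example defeats any variant with the gradient term rescaled by a constant depending only on $N$: the monomial coefficients of an $H^1$ function can be arbitrarily large compared to both $\|g\|_{L^2}$ and $\|g'\|_{L^2}$. Concretely, in your decomposition $a_j=\sum_{m\ge j}C_{m,j}\hat g_m$ the tail $\sum_{m>N}C_{m,j}\hat g_m$ cannot be controlled by $H^1$: for fixed $j\le N$ one has $|C_{m,j}|\sim m^{2j+1}/(j!)^2$ (polynomial, not exponential, in $m$), while the Legendre ODE only gives $\sum_m m(m+1)|\hat g_m|^2\lesssim \|g'\|^2$, i.e.\ essentially one factor of $m^{-1}$ decay. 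For any $j\ge 1$ the weighted Cauchy--Schwarz you propose therefore diverges.

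The paper's proof does not run into this because it asserts that the $L^2$-projection $h_N$ of $g$ onto $\operatorname{span}\{x^j:j_i\le N\}$ equals the truncation $\sum_{j_i\le N}a_jx^j$; this would make the relation $a_j=\sum_{k=j}^N C_{k,j}\lambda_k$ a \emph{finite} sum and the high-mode issue disappear. But that identification is wrong in general (projection onto a non-orthogonal family is not truncation), as the same counterexample shows. So your approach and the paper's approach differ at exactly the point where the argument breaks: you correctly write the infinite change-of-basis and then cannot close it; the paper truncates it illegitimately. If you need a usable version of the lemma for the applications downstream (e.g.\ Lemma~\ref{lem:moments}), you will have to either restrict to polynomials $g$ of degree at most $N$ in each variable (then projection \emph{is} truncation and the paper's argument goes through verbatim, with the gradient term superfluous), or allow the constant in front of the gradient term to also carry the factor $e^{3.5n(N+1)}$.
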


\begin{proof}[Proof of Lemma ~\ref{lem:moment_low}]
We only sketch the argument as it follows along the same lines as the proofs in \cite{Talenti87, AGVT04}. We first consider the splitting $g(x) = h_N(x) + t_N(x)$, where $h_N(x)$ is the $L^2([0,1]^n)$ projection of $g$ onto $\{x^j: j_i\leq N, i=1,\dots, n\}$ and $t_N(x) = g(x)- h_N(x)$ is the projection onto the orthogonal complement. Expanding this into the Legendre basis functions, on the one hand, we obtain that for some $\{\lambda_j: j_i\leq N, i=1,\dots,n \} \subset \R$ we have
\begin{align*}
h_N(x) = \sum\limits_{k_1,\dots,k_n=0}^N \lambda_k L_{k_1}(x_1)\dots L_{k_n}(x_n), \quad \|h_N\|_{L^2([0,1]^n)}^2 = \sum\limits_{k_1,\dots,k_n=0}^N |\lambda_k|^2.
\end{align*}
We note that on the other hand, $h_N(x) = \sum\limits_{j_1,\dots,j_N=0}^N a_j x^j$. Hence, 
\begin{align*}
a_j = \sum\limits_{\substack{k_i=j_i\\ i=1,\dots,n}}^{N} C_{k_1,j_1}\dots  C_{k_n,j_n}\lambda_k, \ k \in \N_0^n,
\end{align*}
where $C_{k_i, j_i}$ are  given by \eqref{eq:Legcoeff}. As explained in \cite{Talenti87}, the matrix $C = \{C_{m,\ell}\}_{m,\ell \in \{0,\dots, N\}}$ is related to a Hilbert matrix $H_N = \{\frac{1}{m+\ell+1}\}_{m,\ell \in \{0,\dots, N\}}$ by the identity $C^T C = H_N^{-1}$. Since the largest singular value of the inverse of the Hilbert matrix $H_N^{-1}$ is known to be of the order $e^{3.5 (N+1)}$ (see equation (9) in \cite{Talenti87}) and the expression for $a_j$ is an $n$-fold multiplication of matrices of the form $C$, we thus obtain that
\begin{align*}
\sum\limits_{j_1,\dots,j_n=0}^N |\lambda_j|^2 \geq e^{-3.5 n (N+1)} \sum\limits_{j_1,\dots,j_n=0}^{N} |a_j|^2.
\end{align*}
Hence,
\begin{align*}
\|g\|_{L^2([0,1]^n)}^2 \geq \|h_N\|_{L^2([0,1]^n)}^2 - \|t_N\|_{L^2([0,1]^n)} ^2
\geq e^{-3.5 n (N+1)} \sum\limits_{j_1,\dots,j_n=0}^{N} |a_j|^2 - \|t_N\|_{L^2([0,1]^n)}^2 .
\end{align*}
Now, the estimate for $t_N$ follows along the exactly same lines as in \cite{AGVT04} using the ODE which is satisfied by the Legendre polynomials. Indeed, we recall that Legendre polynomials satisfy the ODE
\begin{align*}
-\big(x(1-x) L_m'(x)\big)' = m(m+1) L_m(x).
\end{align*}
In one dimension, after multiplying this ODE with $g$, recalling the definition of the coefficients $\lambda_k$ (in one dimension) and integrating by parts this entails that
\begin{align*}
\sum\limits_{k=0}^{\infty} (k+1)k|\lambda_k|^2 = \int\limits_0^1 x (1-x) |g'(x)|^2 dx.
\end{align*}
In higher dimensions this follows analogously. Recalling that $t_N(x)= \sum\limits_{k_1,\dots,k_n=N}^\infty\lambda_k L_{k_1}(x_1)\dots L_{k_2}(x_n)$ thus implies that
\begin{align*}
\|t_N\|_{L^2([0,1]^n)}^2 = 
\sum\limits_{k_1,\dots,k_n=N+1}^\infty |\lambda_k|^2 \leq \frac{1}{4 (N+1)^2}\| \nabla g \|_{L^2([0,1]^n)}^2.
\end{align*}
\end{proof}

As already indicated above, in the sequel, it will be convenient to have estimates in general intervals at our disposal. As a consequence,  we record the following translated and rescaled version of Lemma \ref{lem:moment_low}:

\begin{cor}\label{cor:moment_low2}
Let $I=I_1\times \cdots \times I_n\subset(0,1)^n$, where $I_i$ are connected intervals in $(0,1)$ for $i=1, \dots,n$. Let $\{a_j\}_{j\in\N_0^n}\in \ell^2$ and $g(x)=\sum_{j\in\N_0^n} a_j x^j\in H^1(I)$. Then, 
\begin{align*}
\|g\|_{L^2(I)}^2 \geq  e^{-C (N+1)} \sum\limits_{j_1,\dots, j_n=0}^N  |a_j|^2 - \frac{\|\nabla g\|_{L^2(I)}^2}{4(N+1)^2},
\end{align*}
where $C=C(|I|, n)=5.5n-2\log |I|>0$.
\end{cor}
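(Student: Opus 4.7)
The plan is to reduce the statement to Lemma~\ref{lem:moment_low} by rescaling, exactly as Lemma~\ref{rmk:festmom2} follows from \eqref{eq:festmom}. Let $A$ denote the diagonal $n \times n$ matrix with $A_{ll}=|I_l|$ and let $x_0 \in [0,1)^n$ be the lower-left corner of $I$, so that $I = \{Ax + x_0 : x \in (0,1)^n\}$; set $G(x) := g(Ax+x_0)$ for $x \in (0,1)^n$. Since $|I_l| \leq 1$, the change of variables gives
\[
\|G\|_{L^2((0,1)^n)}^2 = |I|^{-1} \|g\|_{L^2(I)}^2, \qquad \|\nabla G\|_{L^2((0,1)^n)}^2 \leq |I|^{-1} \|\nabla g\|_{L^2(I)}^2.
\]
Expanding $(Ax+x_0)^j$ coordinatewise via the binomial theorem, $G$ has the power series expansion $G(x) = \sum_{k \in \N_0^n} b_k x^k$ with
\[
b_k = |I|^k \sum_{j \geq k} \binom{j}{k} x_0^{j-k} a_j
\]
(in multi-index notation), convergent since $\{a_j\} \in \ell^2$ and $x_0 \in [0,1)^n$. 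Applying Lemma~\ref{lem:moment_low} to $G$ and converting norms back to $I$ yields
\[
\|g\|_{L^2(I)}^2 \geq |I|\, e^{-3.5n(N+1)} \sum_{k_1,\dots,k_n=0}^N |b_k|^2 - \frac{\|\nabla g\|_{L^2(I)}^2}{4(N+1)^2}.
\]

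The remaining task is to bound $\sum_{k_i \leq N} |b_k|^2$ from below by $\sum_{j_i \leq N} |a_j|^2$. On the polynomial truncation $p(y) := \sum_{j_i \leq N} a_j y^j$ the map $a \mapsto b$ is a finite upper-triangular linear system, inverting explicitly to $a_j = \sum_{k_i \leq N,\,k \geq j} \binom{k}{j}(-x_0)^{k-j} |I|^{-k} b_k$. A Frobenius-norm estimate, using $\sum_{j \leq k} \binom{k}{j}^2 = \prod_l \binom{2k_l}{k_l} \leq 4^{|k|}$, $|x_0^{k-j}| \leq 1$, and $|I|^{-k} \leq |I|^{-N}$ for $k_i \leq N$, then gives
\[
\sum_{j_i \leq N} |a_j|^2 \leq 4^{n(N+1)} |I|^{-2N} \sum_{k_i \leq N} |b_k|^2.
\]
Inserting this, the prefactor in front of $\sum_{j_i \leq N} |a_j|^2$ becomes $|I|^{2N+1} \cdot 4^{-n(N+1)} \cdot e^{-3.5n(N+1)}$. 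Since $\log 4 < 2$, this is at least $|I|^{2N+1} e^{-5.5n(N+1)}$, and since $\log|I| \leq 0$ one has $|I|^{2N+1} \geq |I|^{2(N+1)}$, so the prefactor is at least $e^{2(N+1)\log|I| - 5.5n(N+1)} = e^{-(5.5n - 2\log|I|)(N+1)}$, which is the constant claimed.

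The main obstacle is the passage from the polynomial truncation to the full $g$: since $b_k$ for $k_i \leq N$ depends on infinitely many $a_j$ (with $j \geq k$), the matrix inversion above applies verbatim only to $p$. To handle the tail one argues as in the proof of Lemma~\ref{lem:moment_low}, either by redoing the Legendre-expansion argument directly on $I$ using the orthonormal basis $\tilde L_k(y) := |I|^{-1/2} L_k(A^{-1}(y-x_0))$ in $L^2(I)$ (where the Legendre ODE again yields a tail estimate $\|t_N^I\|_{L^2(I)}^2 \leq \|\nabla g\|_{L^2(I)}^2/(4(N+1)^2)$), or by applying the polynomial estimate above to the $L^2(I)$-projection of $g$ onto the span of monomials of multidegree $\leq N$ and absorbing the correction into the same gradient term. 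The sharpness of the constant $5.5n - 2\log|I|$ rests on the slack $\log 4 < 2$, which just balances the factor $4^{n(N+1)}$ coming from the binomial matrix inversion against the exponential loss $e^{3.5n(N+1)}$ from Lemma~\ref{lem:moment_low}.
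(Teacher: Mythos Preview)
Your approach and the paper's are essentially the same, just organized differently: you change variables to $[0,1]^n$ and try to invoke Lemma~\ref{lem:moment_low} as a black box, while the paper rescales the Legendre basis to $I$ and reruns the argument of that lemma directly (writing out only the case $n=1$). Both routes pass through the same binomial change-of-basis matrix and arrive at the constant $5.5n-2\log|I|$ via the same slack $\log 4<2$.

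You correctly flag the one real obstruction: the relation between the $a_j$ and the transformed coefficients $b_k$ is infinite upper-triangular, so the finite matrix inversion you wrote down applies only to the polynomial truncation $p$, not to the full series $g$. Of your two proposed repairs, the first---redo the Legendre decomposition directly on $I$ with the rescaled orthonormal basis $\tilde L_k$---is exactly the paper's argument, so at that point the change-of-variables detour is abandoned and you are back on the paper's track. The second repair, however, has a gap: the $L^2(I)$-projection $h_N$ of $g$ onto polynomials of multidegree $\le N$ does \emph{not} have monomial coefficients equal to the truncated $a_j$ (already for $g(y)=y^{N+1}$ on $[0,1]$ the projection onto constants is $\tfrac{1}{N+2}\ne 0$), so applying your polynomial inequality to $h_N$ bounds the wrong coefficients, and the discrepancy between these and the $a_j$ is not controlled by the gradient term. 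One notational point: in multi-index context your $|I|^k$ must be read as $\prod_l|I_l|^{k_l}$ rather than a power of the volume; the bound $\prod_l|I_l|^{-k_l}\le |I|^{-N}$ is then correct since each $|I_l|\le 1$.
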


\begin{proof}
The proof follows as the one of Lemma ~\ref{lem:moment_low} by now using rescaled and translated Legendre polynomials. 
For the sake of simplicity, here we present the proof just in one dimension.  

We consider $I=(x_0, x_0+\lambda)\subset(0,1)$ and the splitting $g(x)=h_N(x)+t_N(x)$ as above.
Let $\{\mathcal L_m\}_{m=0}^\infty$ be the translated Legendre polynomials  given by $\mathcal L_m(x)=\sqrt{\lambda^{-1}}L_m(\lambda^{-1}(x-x_0))$ for $x\in I$. They are orthonormal in $I$ and satisfy the ODE
\begin{align}\label{eq:ODEscLeg}
-\big((x-x_0)(\lambda+x_0-x)\mathcal L_m'(x)\big)'=m(m+1)\mathcal L_m(x).
\end{align}
Moreover, let $\mathcal C_{m,\ell}$ be the coefficients of $\mathcal L_m$, $\mathcal L_m(x)=\sum_{\ell=0}^m \mathcal C_{m,\ell}x^\ell$, which are related to the coefficients $C_{m,\ell}$ from the Legendre polynomials in the proof of Lemma \ref{lem:moment_low} according to
\begin{align}
\label{eq:coeff_new}
\mathcal C_{m,\ell}=\sum_{k=\ell}^m C_{m,k}\lambda^{-k-\frac{1}{2}}{{k}\choose{\ell}}(-x_0)^{k-\ell}.
\end{align}

Comparing the expansion of $h_N$ into the (translated and rescaled) Legendre basis and its monomial expansion, we obtain that 
\begin{align*}
h_N(x)=\sum_{k=0}^N \lambda_k \mathcal L_k(x) = \sum\limits_{j=0}^{N} a_j x^j \mbox{ with } 
a_j=\sum_{k=j}^N \mathcal C_{k,j} \lambda_k.
\end{align*}
Now, by virtue of the relation \eqref{eq:coeff_new}, we have that $\mathcal{C} = C G$, where $\mathcal{C} = \{\mathcal{C}_{jk}\}_{j,k \in \{1,\dots,N\}}$, $C = \{C_{jk}\}_{j,k \in \{1,\dots,N\}}$ and $G = \{G_{jk}\}_{j,k \in \{1,\dots,N\}}$ with $G_{k \ell} = \lambda^{- k - \frac{1}{2}}{{k}\choose{\ell}}(-x_0)^{k-\ell} $. As a consequence, we infer that
\begin{align*}
\sigma_{\max}^2(\mathcal{C}) \leq \sigma_{\max}^2(C) \sigma_{\max}^2(G) \leq e^{3.5 (N+1)} \sigma_{\max}^2(G)
\leq e^{3.5 (N+1)} 2^{2N} \lambda^{-2N - 1} .
\end{align*}
Therefore, following  \cite{Talenti87}, we arrive at
\begin{align*}
\|h_N\|_{L^2(I)}^2=\sum\limits_{j=0}^N |\lambda_j|^2 \geq e^{-5.5 (N+1)}\lambda^{2N+1} \sum\limits_{j=0}^{N} |a_j|^2.
\end{align*}
Finally, by \eqref{eq:ODEscLeg} and  $(x-x_0)(\lambda+x_0-x)\leq \frac{\lambda^2}{4}$ for $x\in I$, we obtain 
\begin{align*}
\|t_N\|_{L^2(I)}^2=\sum\limits_{j=N+1}^\infty |\lambda_j|^2 \leq \frac{\lambda^2}{4 (N+1)^2}\| \nabla g \|_{L^2(I)}^2\leq \frac{1}{4 (N+1)^2}\| \nabla g \|_{L^2(I)}^2.
\end{align*}
Combining all the estimates and noticing that $\lambda=|I|$,  the claimed result follows.
\end{proof}

\begin{lem}\label{lem:moments}
Let $ I \subseteq(0,1)^n$. There exist  positive constants $C_0$,  $C=C(n, |I|)=6.5n-2\log|I|$ such that for any $f\in H^1(I)$ and for any $\{\gamma_j\}_{j\in\N_0^n}\in \ell^2$ with  $\gamma_j>0$ we have
\begin{align}\label{eq:mom1}
\|f\|_{L^2(I)}  \leq \frac{C_0e^{C\frac{\|\nabla f\|_{L^2(I)}}{\| f\|_{L^2(I)}}}}{\displaystyle{\min_{j_1,\dots,j_n\leq N_f}\gamma_j}}\Big\|\sum_{j\in\N_0^n} \gamma_j \tilde f_j x^j\Big\|_{L^2(I)},
\end{align}
where  $\{\tilde f_j\}_{j\in\N_0^n}$ are the moments of $f\chi_I$ and
$N_f+1\simeq \frac{\|\nabla f\|_{L^2(I)}}{\| f\|_{L^2(I)}}$. Moreover, if $\gamma_j \in \ell^2$ with $\gamma_j\neq 0$ (but not necessarily signed) we have
\begin{align}\label{eq:mom2}
\begin{split}
\|f\|_{L^2(I)} &\leq \frac{C_0 e^{C\frac{\|\nabla f\|_{L^2(I)}}{\| f\|_{L^2(I)}}}}{\displaystyle{\min_{j_1,\dots,j_n\leq N_f}|\gamma_j|}} \left(\Big\|\sum_{j\in\N_0^n} \gamma_j \tilde f_j x^j\Big\|_{L^2(I)}  + \frac{1}{N_f+1} \Big\|\nabla\big(\sum_{j\in\N_0^n} \gamma_j \tilde f_j x^j\big)\Big\|_{L^2(I)} \right).
\end{split}
\end{align}

\end{lem}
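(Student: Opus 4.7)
The plan is to combine the upper moment bound of Lemma~\ref{rmk:festmom2} with a lower bound that recovers the partial sum $\sum_{j_i \leq N}|\tilde f_j|^2$ from the weighted moment series $g(x) := \sum_{j \in \N_0^n}\gamma_j \tilde f_j x^j$. The first step is to apply Lemma~\ref{rmk:festmom2} to $f$ and choose $N = N_f$ with $N_f+1 \simeq \|\nabla f\|_{L^2(I)}/\|f\|_{L^2(I)}$, so that the error term $\|\nabla f\|_{L^2(I)}^2/(4(N+1)^2)$ is at most $\tfrac{1}{2}\|f\|_{L^2(I)}^2$ and can be absorbed on the left. This leaves
\begin{equation*}
\|f\|_{L^2(I)}^2 \leq 2e^{C(N_f+1)} \sum_{j_1,\dots,j_n=0}^{N_f} |\tilde f_j|^2,
\end{equation*}
and from this point the two estimates \eqref{eq:mom1} and \eqref{eq:mom2} are reached by different mechanisms.

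For \eqref{eq:mom1} I exploit the positivity $\gamma_j > 0$ via the pairing identity
\begin{equation*}
\int_I f(x)g(x)\,dx = \sum_{j \in \N_0^n}\gamma_j \tilde f_j \int_I f(x)x^j\,dx = \sum_{j\in\N_0^n}\gamma_j \tilde f_j^2.
\end{equation*}
Because every term on the right is non-negative, the truncated sum satisfies
\begin{equation*}
\Bigl(\min_{j_i \leq N_f}\gamma_j\Bigr) \sum_{j_i \leq N_f} \tilde f_j^2 \leq \sum_{j\in\N_0^n}\gamma_j\tilde f_j^2 = \int_I f\,g \leq \|f\|_{L^2(I)}\|g\|_{L^2(I)}
\end{equation*}
by Cauchy--Schwarz. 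Inserting this into the bound from the first step and dividing by $\|f\|_{L^2(I)}$ yields \eqref{eq:mom1}, noting that only $g \in L^2(I)$ is required.

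For \eqref{eq:mom2}, the pairing $\sum_j \gamma_j \tilde f_j^2$ has no definite sign, so the above trick is unavailable. Instead I apply Corollary~\ref{cor:moment_low2} to $g$ at level $N = N_f$, obtaining
\begin{equation*}
\sum_{j_i\leq N_f}|\gamma_j\tilde f_j|^2 \leq e^{C(N_f+1)}\Bigl(\|g\|_{L^2(I)}^2 + \tfrac{1}{4(N_f+1)^2}\|\nabla g\|_{L^2(I)}^2\Bigr),
\end{equation*}
which combined with the pointwise bound $|\tilde f_j|^2 \leq |\gamma_j\tilde f_j|^2/(\min_{j_i\leq N_f}|\gamma_j|)^2$ and $\sqrt{a^2+b^2} \leq a+b$ gives \eqref{eq:mom2}. (If $g \notin H^1(I)$, the right-hand side of \eqref{eq:mom2} is infinite and the bound is trivial.)

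The main mathematical point is the qualitative asymmetry between the two cases: without positivity of $\gamma_j$, invoking Corollary~\ref{cor:moment_low2} is forced and unavoidably brings in the $\|\nabla g\|_{L^2(I)}$ correction, whereas positivity enables the duality/pairing argument which sidesteps Corollary~\ref{cor:moment_low2} altogether and produces the cleaner bound \eqref{eq:mom1}. Tracking that the numerical constants combine to the stated $C = 6.5n - 2\log|I|$ (with $C_0 > 0$ universal) is a routine matter based on the explicit constants in Lemma~\ref{rmk:festmom2} and Corollary~\ref{cor:moment_low2}.
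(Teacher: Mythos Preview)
Your proof is correct and follows essentially the same route as the paper. The paper casts the pairing for \eqref{eq:mom1} in operator language, introducing $L_\gamma f=\{\gamma_j\tilde f_j\}$ and $L_\gamma^\ast\{\tilde f_j\}=\sum_j\gamma_j\tilde f_j x^j$ and using $\langle L_\gamma f,\{\tilde f_j\}\rangle_{\ell^2}=\langle f,L_\gamma^\ast\{\tilde f_j\}\rangle_{L^2(I)}$, which is exactly your identity $\int_I f g=\sum_j\gamma_j\tilde f_j^2$; the treatment of \eqref{eq:mom2} via Corollary~\ref{cor:moment_low2} is identical.
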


\begin{proof}
If  we choose $N=N_f$ in \eqref{eq:festmom2} with
\begin{align*}
N_f+1\simeq \frac{\|\nabla f\|_{L^2(I)}}{\| f\|_{L^2(I)}},
\end{align*}
we can absorb the second term of \eqref{eq:festmom} into the left hand side to  obtain 
\begin{align}\label{eq:fest}
\|f\|_{L^2(I)}^2&\leq C_0 e^{C(N_f+1)}\sum_{j_1,\dots,j_n=0}^{N_f}|\tilde f_j|^2.
\end{align}

Let us consider the operator which associates to a function $f$ its weighted (with constants $\gamma_j>0$) moments and the associated formal adjoint operator:
\begin{align*}
\begin{array}{ccccccccc}
L_\gamma:& L^2(I)&\to& \ell^2,&\qquad&
L^*_\gamma:& \ell^2 &\to& L^2(I),\\
&f&\mapsto& \{\gamma_j\tilde f_j\}_{j\in\N_0^n},&\qquad
 &&\{a_j\}_{j\in\N_0^n} &\mapsto& {\displaystyle{\sum_{j\in\N_0^n}}} \gamma_ja_j x^j.
 \end{array}
\end{align*}
By pairing $L_\gamma f$  with the moments of $f$ we obtain
\begin{align*}
\Big|\sum_{j_1,\dots,j_n=0}^{N_f} \gamma_j|\tilde f_j|^2
\Big|&\leq |\langle L_\gamma f, \{\tilde f_j\}\rangle_{\ell^2} 
|=|\langle f, L_\gamma^*\{\tilde f_j\}\rangle_{L^2(I)}|\\
&\leq \|f\|_{L^2(I)}\|L_\gamma^*\{\tilde f_j\}\|_{L^2(I)}. 
\end{align*}
Assuming that $\gamma_j>0$ and using  \eqref{eq:fest}, we infer
\begin{align*}
\sum_{j_1,\dots,j_n=0}^{N_f} |\tilde f_j|^2&
\leq \frac{1}{\displaystyle{\min_{j_1,\dots,j_n\leq N_f}\gamma_j}}\sum_{j_1,\dots,j_n=0}^{N_f} \gamma_j|\tilde f_j|^2
\leq \frac{1}{\displaystyle{\min_{j_1,\dots,j_n\leq N_f}\gamma_j}}\|f\|_{L^2(I)} \|L_{\gamma}^{\ast} {\tilde f_j}\|_{L^2(I)}
\\
&\leq \frac{1}{\displaystyle{\min_{j_1,\dots,j_n\leq N_f}\gamma_j}} \left(C_0 e^{C(N_f+1)}\sum_{j_1,\dots,j_n=0}^{N_f}|\tilde f_j|^2\right)^{\frac{1}{2}}\Big\|\sum_{j\in\N_0^n} \gamma_j \tilde f_j x^j\Big\|_{L^2(I)},
\end{align*}
which leads to
\begin{align}\label{eq:fjest2}
\sum_{j_1,\dots,j_n=0}^{N_f} |\tilde f_j|^2&
\leq  \frac{C_0e^{C(N_f+1)}}{\displaystyle{\min_{j_1,\dots,j_n\leq N_f}\gamma_j^2}}\Big\|\sum_{j\in\N_0^n} \gamma_j \tilde f_j x^j\Big\|_{L^2(I)}^2.
\end{align}

Combining  \eqref{eq:fest} and \eqref{eq:fjest2} we obtain \eqref{eq:mom1}. 

Let us now consider  $g(x) = \sum\limits_{j=0}^{\infty} \gamma_j \tilde f_j x^j$ for general, non-zero, but not necessarily signed values of $\gamma_j$. By Corollary ~\ref{cor:moment_low2}  with $N=N_f$,
\begin{align*}
\|g\|_{L^2(I)}^2 \geq e^{- C (N_f+1)}\sum\limits_{j=1}^{N_f}|\gamma_j \tilde f_j|^2 - \frac{1}{4(N_f+1)^{2}}\|\nabla g\|_{L^2(I)}^2.
\end{align*}
Then, if $\gamma_j \neq 0$ for $j_i\leq N_f, i=1,\dots,n,$ we simply bound 
\begin{align*}
\sum\limits_{j_1,\dots,j_n=0}^{N_f} |\tilde f_j|^2
&\leq \frac{1}{\min\limits_{j_1,\dots, j_N\leq  N_f} |\gamma_j|^2}\sum\limits_{j_1,\dots, j_N=0}^{N_f}|\gamma_j \tilde f_j|^2
\\&\leq  \frac{C_0e^{C(N_f+1)}}{\min\limits_{j_1,\dots, j_N\leq  N_f} |\gamma_j|^2}\left(\|g\|_{L^2(I)}^2+\frac{1}{4(N_f+1)^{2}}\|\nabla g\|_{L^2(I)}^2\right),
\end{align*}
which combined with \eqref{eq:fest} leads to \eqref{eq:mom2}.
\end{proof}

\subsection{Analytic continuation}
In this section we recall results that will be used later to propagate the smallness of a function.

As a first auxiliary result, we state a slight modification of the results in \cite{Vessella99, AE13}. 

\begin{lem}\label{lem:analytcont}
Let $\Omega$ be a bounded, connected, open set in $\R^n$  and $\omega\subset \Omega$ with $|\omega|>0$.
Let $g$ be an analytic function in  $\Omega$ satisfying for any $k\in\N_0^n$
\begin{align}\label{eq:analytcontcond}
\big|\partial_x^k g(x)\big|\leq M\rho^{-|k|}k! \quad \mbox{for  }x\in\Omega
\end{align} 
for some $\rho$, $M>0$.
Then, there exist constants $C>0$ and $\theta\in (0,1)$ depending on $\frac{|\omega|}{|\Omega|}, |\omega|$ and $\rho$ such that 
\begin{align}
\label{eq:analytcontsup}
\sup_\Omega |g|&\leq CM^{1-\theta}\|g\|_{L^2(\omega)}^\theta.
\end{align}
\end{lem}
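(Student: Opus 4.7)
The plan is to treat this as a Vessella-type quantitative analytic continuation result: the Cauchy-type derivative bound lets us extend $g$ holomorphically to a complex neighborhood of $\Omega$, after which a log-convexity (three-ball) inequality combined with a chain-of-balls argument propagates smallness from one ball to all of $\Omega$. The only genuinely non-routine ingredient is the reduction from $L^2$-smallness on a merely measurable set $\omega$ to $L^\infty$-smallness on some ball contained in $\Omega$, which forces the use of the Lebesgue density theorem.

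First, I would observe that for each $x_0 \in \Omega$ the hypothesis \eqref{eq:analytcontcond} implies that the Taylor series of $g$ at $x_0$ converges in the complex polydisc $x_0 + (\rho/2)\mathbb{D}^n \subset \C^n$ to a holomorphic function $G$ with $\|G\|_{L^\infty(x_0 + (\rho/2)\mathbb{D}^n)} \leq C_n M$, via the crude bound $\sum_k \rho^{-|k|}(\rho/2)^{|k|} \leq C_n$. Hence $g$ extends to a bounded holomorphic function on the complex $(\rho/2)$-tubular neighborhood of $\Omega$, with $L^\infty$-norm $\leq C_n M$. On this complex neighborhood the subharmonicity of $\log|G|$ (or a coordinatewise Hadamard three-circles theorem) yields the standard three-ball inequality: there exist $\theta_0 \in (0,1)$ and $C>0$ depending on $\rho$ such that whenever $B_{4r}(x_1) \subset \Omega$ with $r \leq \rho/8$,
\[
\sup_{B_{2r}(x_1)}|g| \;\leq\; C\, M^{1-\theta_0}\bigl(\sup_{B_r(x_1)}|g|\bigr)^{\theta_0}.
\]

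Next, since $\Omega$ is connected and bounded, any point in $\Omega$ can be joined to a prescribed small ball $B_r(x_0) \subset \Omega$ by a chain of overlapping balls, whose number depends only on $|\Omega|$, $\rho$ and $r$. Iterating the three-ball inequality along this chain in the standard way produces, for some $\theta_1 = \theta_1(\rho, r, |\Omega|) \in (0,1)$,
\[
\sup_\Omega |g| \;\leq\; C\, M^{1-\theta_1}\bigl(\sup_{B_r(x_0)}|g|\bigr)^{\theta_1}.
\]

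The main obstacle is the passage from $\|g\|_{L^2(\omega)}$ to $\sup$ of $|g|$ on some ball, as $\omega$ need not contain any ball. Here I would invoke the Lebesgue density theorem: almost every point of $\omega$ is a density point, so there exist $x_0 \in \omega \cap \Omega$ and $r_0 > 0$ (depending on $|\omega|$) with $B_{r_0}(x_0) \subset \Omega$ and $|\omega \cap B_r(x_0)| \geq \tfrac{1}{2}|B_r(x_0)|$ for all $r \leq r_0$. The holomorphic extension shows that $g$ is Lipschitz on $B_{r_0}(x_0)$ with constant $\lesssim M/\rho$, so for every $r \leq r_0$,
\[
|g(x_0)|^2\,|\omega \cap B_r(x_0)| \;\leq\; 2\,\|g\|_{L^2(\omega)}^2 \;+\; C\,(M/\rho)^2\, r^{n+2}.
\]
Optimizing the right-hand side in $r$ (balancing the two terms) yields $|g(x_0)| \leq C M^{1-\theta_2}\|g\|_{L^2(\omega)}^{\theta_2}$ for some $\theta_2 \in (0,1)$ depending on $n$, $\rho$, $|\omega|$. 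Applying the Lipschitz bound once more gives the same estimate for $\sup_{B_r(x_0)}|g|$ with a suitably chosen small $r > 0$, and composing with the chain-of-balls bound yields \eqref{eq:analytcontsup} with $\theta = \theta_1 \theta_2 \in (0,1)$ depending on $\rho$, $|\omega|$ and $|\omega|/|\Omega|$, as claimed.
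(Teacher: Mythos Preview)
Your overall strategy---holomorphic extension, three-balls, chain of balls---is the right framework, and the first two steps are fine. The gap is in the passage from $\|g\|_{L^2(\omega)}$ to a sup-bound on a fixed ball. The Lebesgue density theorem does \emph{not} furnish a density radius $r_0$ depending only on $|\omega|$: if $\omega\subset(0,1)$ is a union of $k$ equally spaced intervals of total length $\tfrac12$, then at every point of $\omega$ the condition $|\omega\cap B_r|\ge\tfrac12|B_r|$ fails once $r>1/(2k)$, so $r_0\lesssim 1/k$ while $|\omega|=\tfrac12$ is fixed. Consequently your optimization in $r$ is confined to $r\le r_0(\omega)$, and the starting ball for the chain-of-balls step has radius depending on the specific set $\omega$ (or, if you feed the optimizer $r^*\sim(\rho\|g\|_{L^2(\omega)}/M)^{2/(n+2)}$ into the chain, depending on $g$ itself). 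Either way the exponent $\theta_1$ then depends on more than $|\omega|/|\Omega|$, $|\omega|$ and $\rho$, and the lemma as stated is not obtained. The final ``Lipschitz once more'' step has the same defect: with a fixed ball radius the Lipschitz error $C(M/\rho)r$ is of order $M$ and swamps the gain; with a $g$-dependent radius the chain length, hence $\theta_1$, becomes $g$-dependent.

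The paper does not attempt to prove this from scratch. It simply quotes the quantitative analytic continuation results of Vessella and of Apraiz--Escauriaza, which already give
\[
\sup_\Omega|g|\le C' M^{1-\theta}\Bigl(\tfrac{1}{|\omega|}\|g\|_{L^1(\omega)}\Bigr)^{\theta}
\]
with $\theta=\theta(|\omega|/|\Omega|,\rho)$, and then applies H\"older to pass from $L^1(\omega)$ to $L^2(\omega)$. The substantive content of those references is precisely the step you are missing: a propagation-of-smallness estimate from an arbitrary \emph{measurable} subset rather than from a ball, obtained via a Remez-type inequality for real-analytic functions that controls sublevel sets uniformly in $|\omega|/|\Omega|$ and $\rho$. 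A ball-to-ball three-spheres inequality combined with Lebesgue density cannot recover this uniform dependence.
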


\begin{proof} 
We rely on the quantitative analytic continuation results from \cite{Vessella99, AE13}. There it is shown that under the hypotheses of the statement 
there exist $C'>0$ and $\theta\in(0,1)$ depending on $\frac{|\omega|}{|\Omega|}$ and $\rho$ such that 
\begin{align*}
\sup_\Omega|g|\leq C' M^{1-\theta}\left(\frac{\|g\|_{L^1(\omega)}}{|\omega|}\right)^\theta.
\end{align*}
By H\"older's inequality we bound the $L^1$-norm by the $L^2$-norm leading to
\begin{align*}
\sup_\Omega|g|
&\leq C'M^{1-\theta}\left(\frac{\|g\|_{L^2(\omega)}}{|\omega|^{\frac{1}{2}}}\right)^\theta
=C M^{1-\theta}\|g\|_{L^2(\omega)}^\theta.
\end{align*}
\end{proof}

\begin{rmk}
We will also use the estimate \eqref{eq:analytcontsup} in the form
\begin{align}
\label{eq:analytcontL2}
\|g\|_{L^2(\Omega)}&\leq CM^{1-\theta}\|g\|_{L^2(\omega)}^\theta,
\end{align}
which follows after applying H\"older's inequality on the left hand side of \eqref{eq:analytcontsup}.
\end{rmk}

The following auxiliary results are for complex analytic functions.

\begin{lem}\cite[pages 558-559]{J60}
\label{lem:analytcontJohn}
Let $J$ be a bounded, connected, open interval in $\R$ and let $g$ be  a function which is complex analytic in $2J\times(0,|J|) \subset \R^2$ (where $\R^2$ is identified canonically with $\C$) and continuous in its closure.  Then  there exists $\theta\in(0,1)$ such that 
\begin{align*}
\sup_{J\times[0, \frac{|J|}{2}]} |g| \leq \big(\sup_{2J} |g|\big)^\theta \big(\sup_{2J\times[0,|J|]} |g|\big)^{1-\theta}.
\end{align*}
\end{lem}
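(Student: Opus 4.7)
The plan is to recognize this statement as an instance of the classical \emph{two-constants theorem} (a Hadamard three-domains type inequality) applied to the rectangle $R := 2J \times (0,|J|)$. Since $g$ is holomorphic on $R$ and continuous on $\overline R$, the function $u := \log|g|$ is subharmonic on $R$ and upper semicontinuous on $\overline R$ (possibly taking the value $-\infty$ at zeros of $g$). Write $M_0 := \sup_{2J} |g|$ and $M_1 := \sup_{\overline R} |g|$; clearly $M_0 \leq M_1$. The degenerate cases $M_1 = 0$ (so $g\equiv 0$) and $M_0 = 0$ (which forces $g\equiv 0$ by boundary uniqueness for analytic functions) are trivial under the convention $0^\theta 0^{1-\theta}=0$, so I may assume $0 < M_0 \leq M_1$.

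Next I would introduce the harmonic measure $\omega$ of the bottom edge $E_0 := 2J \times \{0\}$ relative to $R$, that is, the bounded harmonic function on $R$ with boundary values $1$ on $E_0$ and $0$ on $\partial R \setminus E_0$. The convex combination $v(z) := \omega(z)\log M_0 + (1-\omega(z))\log M_1$ is harmonic on $R$ and dominates $u$ on $\partial R$ away from the four corners: on $E_0$ it equals $\log M_0 \geq u$, and on $\partial R \setminus E_0$ it equals $\log M_1 \geq u$. The maximum principle for subharmonic functions therefore yields $u \leq v$ on $R$, and exponentiation gives the pointwise bound
\begin{equation*}
|g(z)| \leq M_0^{\omega(z)} \, M_1^{1-\omega(z)} \qquad \text{for all } z \in \overline R.
\end{equation*}

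The remaining step is to secure a uniform lower bound $\omega(z) \geq \theta > 0$ on the compact set $K := J \times [0,|J|/2]$, with $\theta$ depending only on the geometry of $R$. Since $J$ is centered in $2J$, the set $K$ lies at distance exactly $|J|/2$ from the top edge and from each of the two lateral edges of $R$. Combining positivity of $\omega$ on $R$, continuity of $\omega$ on $\overline R$ away from the corners, and the boundary value $\omega \equiv 1$ on $E_0$, a compactness argument produces such a $\theta$. Together with $M_0 \leq M_1$, the pointwise bound rewrites as $|g(z)| \leq M_0^{\theta} M_1^{1-\theta}$ on $K$, and taking the supremum yields the claim.

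The main obstacle is the irregular behavior of the harmonic measure near the corners of $R$, where $\omega$ fails to be continuous up to $\partial R$. I would sidestep this by conformally transferring the problem to the unit disk: a Schwarz--Christoffel map $\varphi : R \to \mathbb{D}$ can be chosen to send $E_0$ onto an arc of $\partial \mathbb{D}$ and to map $K$ into a compact subset of $\mathbb{D}$. The two-constants theorem on the disk, where harmonic measure of an arc is given by an explicit Poisson integral, then delivers a fully quantitative $\theta \in (0,1)$ depending only on the fixed aspect ratio of $R$, completely circumventing any boundary-regularity issues.
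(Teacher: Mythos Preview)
Your argument via the two-constants theorem and harmonic measure of the bottom edge is correct and is the standard route to this type of Hadamard-style inequality. The paper, however, does not supply its own proof of this lemma: it is stated with a direct citation to \cite[pages 558--559]{J60} and used as a black box (an $L^2$ analogue is proved separately as Lemma~\ref{lem:John} by different means, namely a three-balls estimate for harmonic functions combined with the Cauchy--Riemann equations). So there is no in-paper proof to compare against; your write-up is a self-contained justification of the cited classical fact.

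One small remark on your treatment of the degenerate case $M_0=0$: the bottom edge $2J$ lies on the boundary of $R$, not in the domain of analyticity, so ``boundary uniqueness'' needs a word of justification. The cleanest way is Schwarz reflection across $2J$ (using continuity up to the boundary) to obtain an analytic extension vanishing on a set with accumulation points; alternatively, your conformal transplant to the disk already reduces this to Privalov's theorem. Either way the conclusion $g\equiv 0$ stands, and the rest of your argument goes through without change.
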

An $L^2$-version of this will be discussed in Lemma ~\ref{lem:John}.

Applying Lemma \ref{lem:analytcontJohn} to each variable separately an analogous result for functions which are analytic in several variables can be obtained: 

\begin{cor}\label{cor:analytcontJohn}
Let $g$ be  a function which is continuous in $B^\pm_{4\sqrt{n}r} \subset (\C_{\pm})^n$ and (complex) analytic in its interior.  Then  there exists $\theta\in(0,1)$ such that 
\begin{align*}
\sup_{B_r^\pm} |g| \leq \big(\sup_{(-2r,2r)^n} |g|\big)^\theta \big(\sup_{B_{4\sqrt{n}r}^\pm} |g|\big)^{1-\theta}.
\end{align*}
\end{cor}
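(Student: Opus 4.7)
The plan is to iterate Lemma \ref{lem:analytcontJohn} once per complex variable, collecting exponents multiplicatively as I go; the role of the factor $4\sqrt{n}$ is precisely to absorb every enlarged auxiliary domain that arises along the iteration. By conjugation symmetry it suffices to treat the $+$ case. The first step is a purely geometric reduction: since $|z_j| \leq |z| \leq r$ and $\im z_j \geq 0$ for every $z \in B_r^+$, one has $B_r^+ \subset P_r^+ := \big([-r,r]+i[0,r]\big)^n$, and it is enough to bound $\sup_{P_r^+}|g|$.

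Next, I would prove by induction on $k \in \{0, 1, \dots, n\}$ that
\begin{equation*}
\sup_{P_r^+}|g| \leq \big(\sup_{S_k}|g|\big)^{\theta^k}\big(\sup_{B_{4\sqrt{n}r}^+}|g|\big)^{1-\theta^k},
\end{equation*}
where $S_k := (-2r, 2r)^k \times \big([-r,r]+i[0,r]\big)^{n-k}$ and $\theta \in (0,1)$ is the exponent produced by Lemma \ref{lem:analytcontJohn} for the interval $J = (-r, r)$. For the step from $k$ to $k+1$, I would freeze the variables $z_j$ with $j \neq k+1$ in their current factors and apply Lemma \ref{lem:analytcontJohn} to the one-variable slice $z_{k+1} \mapsto g(z_1, \dots, z_n)$, which is continuous on $\overline{(-2r,2r)+i[0,2r]}$ and analytic in its interior; then I would take the supremum over the frozen variables on both sides and invoke the elementary inequality $\sup_y\big(f(y)^\theta h(y)^{1-\theta}\big) \leq (\sup_y f)^\theta (\sup_y h)^{1-\theta}$. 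The effect is to replace the "small" complex factor of $S_k$ in slot $k+1$ by the real interval $(-2r, 2r)$ (producing $S_{k+1}$) while the "large" complex slab $(-2r, 2r)+i[0,2r]$ in slot $k+1$ is bounded by $\sup_{B_{4\sqrt{n}r}^+}|g|$ at the cost of the factor $(1-\theta)\theta^k$; a short arithmetic check shows the exponents collapse exactly to $\theta^{k+1}$ and $1-\theta^{k+1}$.

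The only real obstacle, and essentially the only thing that requires any care, is checking that every auxiliary set appearing in the iteration is contained in $B_{4\sqrt{n}r}^+$ so that Lemma \ref{lem:analytcontJohn} can legitimately be invoked on each slice. A direct computation shows that a point in $(-2r,2r)^k \times \big((-2r,2r)+i[0,2r]\big) \times \big([-r,r]+i[0,r]\big)^{n-k-1}$ satisfies
\begin{equation*}
|z|^2 \leq 4k r^2 + 8 r^2 + 2(n-k-1) r^2 = (2n + 2k + 6) r^2 \leq (4n+6) r^2 \leq 16 n r^2
\end{equation*}
for $n \geq 1$, which gives the required inclusion; the case $k=n$ is visibly covered since $(-2r,2r)^n \subset B_{4\sqrt{n}r}^+$. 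Taking $k = n$ in the induction and renaming $\theta^n \in (0,1)$ as the new $\theta$ yields the claimed inequality.
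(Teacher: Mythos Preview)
Your proof is correct and follows the same approach as the paper: iterate Lemma~\ref{lem:analytcontJohn} once per variable with $J=(-r,r)$, using the inclusions $B_r^+\subset\big([-r,r]+i[0,r]\big)^n$ and $\big([-2r,2r]+i[0,2r]\big)^n\subset B_{4\sqrt{n}r}^+$; you have simply made the induction and the inclusion checks explicit where the paper only sketches them. One small point worth noting (equally unaddressed in the paper): when some frozen coordinates are real, analyticity of the slice in $z_{k+1}$ is not immediate from the hypothesis, but this is handled by shifting the real frozen variables by $i\epsilon$ and passing to the limit using continuity of $g$.
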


\begin{proof}
Let us consider without loss of generality the case in $(\C_+)^n$.
The result follows after applying  Lemma ~\ref{lem:analytcontJohn} iteratively in each variable with $J=(-r,r)$ and noticing that
\begin{align*}
B_{r}^+\subset \big((-r,r)\times [0,r]\big)^n,\qquad
B_{4\sqrt{n}r}^+\supset \big((-2r,2r)\times [0,2r]\big)^n.
\end{align*}
\end{proof}

\section{Quantitative Unique Continuation through Stability of the Moment Problem, One Dimension}
\label{sec:moments1d}

In this section we present our proof of Theorem ~\ref{thm:qucHilbert_a} and thus provide an alternative proof to the ones from \cite{APS14, Rue17}. Moreover, we present new quantitative stability results for the modified Hilbert transform -- for which currently no Carleman estimates are available -- and for the inverse of the fractional Laplacian.

Given two  open, bounded subsets $I, J\subset\R$ with $\overline{I}\cap \overline{J} = \emptyset$, we can assume without loss of generality that $I\Subset (0,1)$ and $J\Subset (1, +\infty)$.  
Indeed, suppose that $J$ is located to the right of $I$, which can always be ensured, possibly after a suitable reflection. Let $I=(a, b)$ and  $d=\min\{\frac{\dist (I, J)}{|I|}, \frac{1}{2}\}>0$. Then, the change of variables
\begin{align*}
x'=\frac{1}{2} \Big(\frac{x-a}{b-a}+1-\frac{d}{2}\Big)
\end{align*}
transforms $I$ to $I'=(\frac{1}{2}-\frac{d}{4}, 1-\frac{d}{4})\Subset(0,1)$, and since $J\subset \big(b+d(b-a), \infty\big)$ then $J'\subset\big(1+\frac{d}{4},\infty\big)$.
We will make use of this fact throughout this section.

\subsection{Hilbert transform}
\label{sec:Hilbert}
In this section we prove Theorem ~\ref{thm:qucHilbert_a}, where $Hf$ is the Hilbert transform  given by 
\begin{align*}
Hf(x)=\mathcal F^{-1}\big(-i\sign(\cdot)\mathcal F f(\cdot)\big)(x)=\frac{1}{\pi}\textrm{p.v.}\int_{\R}\frac{f(y)}{x-y}dy,
\end{align*}
where the notation $\text{p.v.}$ denotes that the integral must be understood in the principal value sense.
The Hilbert transform plays an important role, for instance, in the study of inverse problems originating from medical imaging \cite{Natterer, DNCK06}. 
Its logarithmic stability has previously been established by different methods in \cite{APS14} (using Slepian's miracle \cite{SP61} in the form that the Hilbert transform commutes with a certain second order differential operator) and, subsequently, in \cite{Rue17} (viewing the Hilbert transform as boundary operator associated with an harmonic extension operator and using Carleman estimates for this). We refer to \cite{APS96} and \cite{T51} for some early works related to the stability of (inverting) truncated Hilbert type transforms.

\begin{proof}[Proof of Theorem ~\ref{thm:qucHilbert_a}]
Let us suppose without loss of generality that $I\Subset(0,1)$ and $J\Subset(1,\infty)$ and let $f\in C^\infty_c(I)$.  If we consider $x>1$, the kernel of the Hilbert transform is analytic. We expand it in $y$ and use the definition of the moments in \eqref{eq:defmom} to infer
\begin{align}\label{eq:Hexp}
\begin{split}
Hf(x)&=\frac{1}{\pi}\int_{\R}\frac{f(y)}{x-y}dy
=\frac{1}{\pi}\int_{\R}\frac{1}{x}\frac{f(y)}{1-\frac{y}{x}}dy
=\frac{1}{\pi x}\int_{\R}f(y)\sum_{j=0}^\infty\left(\frac{y}{x}\right)^jdy\\
&=\frac{1}{\pi}\sum_{j=0}^\infty x^{-1-j}\int_{\R}y^j f(y)dy
=\frac{1}{\pi}\sum_{j=0}^\infty f_j x^{-j-1}.
\end{split}
\end{align}

By \eqref{eq:mom1}  in Lemma ~\ref{lem:moments} with  $\gamma_j=1$ and since $\tilde f_j=f_j$ (which follows from the fact that $f\in C_c^{\infty}(I)$) we have
\begin{align}
\label{eq:HT_estimate}
\|f\|_{L^2(I)}
\leq Ce^{C(N_f+1)}\Big\|\sum_{j=0}^\infty f_jz^j\Big\|_{L^2(I)},
\end{align}
where $N_f+1\simeq \frac{\|f'\|_{L^2(I)}}{\| f\|_{L^2(I)} } \leq \frac{\|f\|_{H^1(I)}}{\| f\|_{L^2(I)} } $. 
We seek to write the  norm on the right hand side of \eqref{eq:HT_estimate} in terms of $Hf$. By the change of variables $z=x^{-1}$ we obtain
\begin{align*}
\Big\|\sum_{j=0}^\infty f_jz^j\Big\|_{L^2(I)}=\Big\|\sum_{j=0}^\infty f_jx^{-j-1}\Big\|_{L^2(I^{-1})}=\pi \|Hf\|_{L^2(I^{-1})},
\end{align*}
where $ I^{-1}\Subset(1,\infty)$.
Therefore,
\begin{align}\label{eq:fHf1}
\|f\|_{L^2(I)}
&\leq C e^{C(N_f+1)}\|Hf\|_{L^2(I^{-1})}.
\end{align}

If $I^{-1}\subseteq J$, the desired result follows directly.
Otherwise, i.e. if  $I^{-1}\nsubseteq J$, we need to propagate the information of $Hf$ in $J$ into $I^{-1}$.
Since $Hf$ is (real) analytic in $(1,\infty)$, we seek to apply Lemma ~\ref{lem:analytcont} with $\Omega=\text{conv}(I^{-1}\cup J)$  and  $\omega=J$. (We remark that the openness of $J$ is not required, but only that $J$ has positive measure, as it is pointed out in Remark ~\ref{rmk:Jmeasure}.) To that end, we observe that for any $k\in\N_0$ and $x\in\Omega$, we have
\begin{align}\label{eq:HfM}
\Big|\frac{d^k}{dx^k} Hf(x)\Big|=\frac{k!}{\pi}\left|\int_I \frac{f(y)}{(x-y)^{1+k}}dy\right|\leq k! \rho^{-1-k} \pi^{-1}\|f\|_{L^2(I)}
\end{align}
with $\rho=\dist(I, \Omega)$. Then, the hypothesis \eqref{eq:analytcontcond} holds with $M=(\rho\pi)^{-1}\|f\|_{L^2(I)}$. The application of Lemma ~\ref{lem:analytcont} hence yields  the existence of constants $C>0$ and $\theta\in(0,1)$ depending on $\Omega$, $J$ and $\rho$ (and therefore only on $I$ and $J$) such that \begin{align*}
\|Hf\|_{L^2(I^{-1})}\leq  C \|f\|_{L^2(I)}^{1-\theta}\|Hf\|_{L^2(J)}^\theta.
\end{align*}
The combination of this estimate with \eqref{eq:fHf1} implies the desired result:
\begin{align*}
\|f\|_{L^2(I)}
&\leq Ce^{\frac{C}{\theta}(N_f+1)}\|Hf\|_{L^2(J)}.
\end{align*}
This concludes the proof of Theorem \ref{thm:qucHilbert_a}.
\end{proof}

\begin{rmk}
\label{rmk:relax_support_HT}
Examining the proof of Theorem \ref{thm:qucHilbert_a}, we note that if $f\in H^1(I)$, we have $H(\chi_I f)(x)=\frac{1}{\pi}\sum_{j=0}^\infty \tilde f_j x^{-j-1}$ and that the proof for \eqref{eq:truncatedHT} can thus be carried out along the same lines as above with the only difference of always replacing $f_j$ by $\tilde f_j$.
\end{rmk}

\subsection{The modified Hilbert transform}
\label{sec:mod_Hilb}
In this section we consider a perturbation of the Hilbert transform which, for instance, naturally arises in fluid mechanics.  
For $\delta$ being a positive number, we define the modified Hilbert transform $H_\delta$ by
\begin{align*}
H_\delta f(x)=\mathcal F^{-1}\left(-i\coth\big(\frac{\cdot}{2\delta} \big)\mathcal F f(\cdot)\right)(x)=\delta \text{ p.v.}\int_{\R}\coth\big(\pi\delta(x-y)\big)f(y)dy.
\end{align*}
Formally, in the limit $\delta\to 0$, the Hilbert transform is recovered.

The modified Hilbert transform $H_{\delta} f$ appears in the intermediate long wave equation \cite{Joseph77}, which describes long internal gravity waves in a stratified fluid with finite depth represented by the parameter $\delta$. Qualitative unique continuation results had previously been considered in \cite{KPV20} (see also \cite{KPPV20a} for applications of the qualitative uniqueness ideas to other nonlinear dispersive equations). 

We begin by proving a logarithmic stability estimate for the modified Hilbert transform by reducing it to a perturbation of the estimate for the Hilbert transform from the previous section. In order to carry out this strategy we require additional conditions on $f$. In Proposition ~\ref{prop:qucTHilbert1} we dispose of these, however at the expense of dealing with $H^1(J)$ instead of $L^2(J)$-measurements.

\begin{prop}\label{prop:qucTHilbert}
Let $I$ and $J$ be two open, bounded, non-empty subsets of $\R$ such that  $\overline I\cap\overline J=\emptyset$, let $\delta>0$ and let $f\in C^\infty_c(I)$.
Suppose that one of the following conditions holds:
\begin{enumerate}[label=(\roman*)]
\item $f$ has zero mean, i.e. $\int_I f(x)dx=0$,
\item there exists a constant $C'$ independent of $f$ such that $|\int_I f(x)dx|\leq C' \|H_\delta f\|_{L^2(J)}$, which in particular holds if $f$ has a constant sign,
\item $\delta<\delta_0$ for some $\delta_0$ depending on $I$ and $\frac{\|f\|_{H^1(I)}}{\|f\|_{L^2(I)}}$ (see \eqref{eq:relation}).
\end{enumerate}
Then there exists a constant $C=C(I,J,\delta)>0$ such that
\begin{align}\label{eq:qucTHilbert}
\|f\|_{L^2(I)}
&\leq C e^{C\frac{\|f'\|_{L^2(I)}}{\| f\|_{L^2(I)}}}\|H_\delta f\|_{L^2(J)}.
\end{align}
\end{prop}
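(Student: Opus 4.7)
The plan is to split according to which of hypotheses (i)--(iii) is invoked: for (iii) a perturbation argument reduces $H_\delta$ to the Hilbert transform $H$; for (i) and (ii) a direct moment expansion in the spirit of the proof of Theorem~\ref{thm:qucHilbert_a}. Throughout I assume (after the rescaling at the start of Section~\ref{sec:moments1d}) that $I\Subset(0,1)$ and $J\Subset(1,\infty)$ lies to the right of $I$, and write $N_f:=\|f'\|_{L^2(I)}/\|f\|_{L^2(I)}$.

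\emph{Case (iii).} Using the partial fraction identity $\pi\coth(\pi z)=\frac{1}{z}+\sum_{n\geq 1}\frac{2z}{z^2+n^2}$, I decompose $H_\delta f=Hf+S_\delta f$, where the kernel of $S_\delta$ equals $\frac{2\delta^2}{\pi}\sum_{n\geq 1}\frac{x-y}{\delta^2(x-y)^2+n^2}$; this is uniformly $O(\delta^2)$ on the bounded set $J\times I$, so $\|S_\delta f\|_{L^2(J)}\leq C\delta^2\|f\|_{L^2(I)}$. Applying Theorem~\ref{thm:qucHilbert_a} to $Hf$ together with the triangle inequality yields
\begin{align*}
\|f\|_{L^2(I)}\leq Ce^{CN_f}\|H_\delta f\|_{L^2(J)}+C\delta^2 e^{CN_f}\|f\|_{L^2(I)},
\end{align*}
and the second summand is absorbed into the LHS whenever $\delta<\delta_0\sim e^{-CN_f/2}$, which is precisely the threshold in hypothesis (iii).

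\emph{Cases (i) and (ii).} Since $\delta$ is no longer forced to be small, I instead expand the kernel: for $x>y$, $\coth(z)=1+2\sum_{k\geq 1}e^{-2kz}$ gives
\begin{align*}
H_\delta f(x)=\delta f_0+2\delta\sum_{k\geq 1}s^k c_k,\qquad s:=e^{-2\pi\delta x},\ c_k:=\int_I e^{2k\pi\delta y}f(y)\,dy.
\end{align*}
The change of variables $t=e^{2\pi\delta y}$ on the $y$-side and the definition $g(t):=f(\log t/(2\pi\delta))/t$ on $I^{*}:=e^{2\pi\delta I}\Subset(1,e^{2\pi\delta})$ identify $G_k:=\int_{I^{*}}t^k g(t)\,dt=2\pi\delta c_k$ with the moments of $g$, in particular $G_0=2\pi\delta f_0$; this yields the power series representation $\sum_{k\geq 0}G_k s^k=\pi(H_\delta f+\delta f_0)$ on $s\in(0,e^{-2\pi\delta})$. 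To invert this I would apply a translated/rescaled version of Lemma~\ref{lem:moments} to $g$ on $I^{*}$ with weights $\gamma_k=\rho^k$, where $\rho<e^{-4\pi\delta}$ is forced by the requirement that $\sum\gamma_kG_k t^k=\pi(H_\delta f(\tilde x)+\delta f_0)\big|_{e^{-2\pi\delta\tilde x}=\rho t}$ be convergent for $t\in I^{*}$. The associated evaluation $\tilde x$ ranges over an interval $J'\Subset(1,\infty)\setminus I$; since $H_\delta f$ is real-analytic on $\R\setminus I$, Lemma~\ref{lem:analytcont} propagates the data from $J$ to $J'$ via $\|H_\delta f\|_{L^2(J')}\leq C\|f\|_{L^2(I)}^{1-\theta}\|H_\delta f\|_{L^2(J)}^\theta$. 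Combining these bounds with the equivalence $\|g\|_{L^2(I^{*})}\simeq_\delta\|f\|_{L^2(I)}$ I obtain
\begin{align*}
\|f\|_{L^2(I)}\leq Ce^{CN_f}\bigl(\|f\|_{L^2(I)}^{1-\theta}\|H_\delta f\|_{L^2(J)}^\theta+\delta|f_0|\bigr).
\end{align*}
Under (i) the $f_0$ term vanishes; under (ii) it is majorised by $C\|H_\delta f\|_{L^2(J)}$. A standard Young-type absorption of $\|f\|_{L^2(I)}^{1-\theta}$ into the left-hand side then yields~\eqref{eq:qucTHilbert}.

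The hardest step is the moment argument in (i)/(ii): the interval $I^{*}$ sits \emph{outside} the natural radius of convergence $e^{-2\pi\delta}$ of $\sum G_k s^k$, which forces the weight $\rho<e^{-4\pi\delta}$, and hence introduces an extra factor $\rho^{-N_g}\sim e^{4\pi\delta N_g}$ into the constants of Lemma~\ref{lem:moments}; one must check that this correction is absorbed into the $\delta$-dependent exponential constant $C=C(I,J,\delta)$ without spoiling the logarithmic form of the final estimate. The analogous difficulty in (iii) is the opposite one: the absorption $C\delta^2 e^{CN_f}<1$ demands that the smallness threshold $\delta_0$ depend exponentially on $N_f$, which is reflected in the formulation of hypothesis (iii).
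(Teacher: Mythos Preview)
Your argument is correct in outline, but it takes a genuinely different route from the paper's, and the two approaches are worth contrasting.

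The paper does \emph{not} split into cases at the level of method. Instead it introduces the single change of variables $\tilde x=\frac{e^{2\pi\delta x}-1}{2\pi\delta}$, under which the kernel of $H_\delta$ becomes $\tilde k_\delta(\tilde x,\tilde y)=(\pi^{-1}+2\delta\tilde x)(\tilde x-\tilde y)^{-1}-\delta$, i.e.\ an explicit affine perturbation of the Hilbert kernel. A single moment argument (Lemma~\ref{lem:qucTtildeH}) then yields a master estimate
\[
\|f\|_{L^2(I)}\leq C e^{C N_f}\bigl(\|H_\delta f\|_{L^2(J)}+\delta^{\nu/2}|f_0|\bigr),
\]
and all three cases (i)--(iii) drop out of this one inequality: in (i) the $f_0$ term vanishes, in (ii) it is controlled by the data, and in (iii) it is absorbed into the left-hand side once $\delta$ is small enough (this is exactly the relation~\eqref{eq:relation}). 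Your function $g(t)=f(\log t/(2\pi\delta))/t$ is in fact the paper's $F$ composed with an affine map, so the two transformations coincide; the difference is that the paper expands the \emph{rational} transformed kernel in $\tilde x^{-j}$, whereas you expand $\coth$ directly as $1+2\sum e^{-2kz}$ and work with exponential moments.

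Your treatment of (iii) via the partial-fraction identity $H_\delta=H+S_\delta$ with $\|S_\delta\|=O(\delta^2)$ is more elementary than the paper's and yields the same exponential-in-$N_f$ threshold on $\delta_0$; this is a nice shortcut. Your treatment of (i)/(ii), on the other hand, carries a small technical debt: Lemma~\ref{lem:moments} is stated for $I\subset(0,1)$, while your $I^*=e^{2\pi\delta I}\subset(1,e^{2\pi\delta})$ lies outside this range, so the ``translated/rescaled version'' you invoke has to be written down (the translation introduces powers of $|x_0|\ge 1$ into the moment relations, which must be tracked into the constant $C(I,J,\delta)$). The paper sidesteps this by building the normalization into the change of variables from the start, so that $\tilde I\Subset(0,1)$ and Lemma~\ref{lem:moments} applies verbatim. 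None of this is fatal---your correction factor $\rho^{-N_g}\sim e^{4\pi\delta N_g}$ is indeed harmless for fixed $\delta$---but the paper's unified route avoids having to argue it separately.
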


We seek to argue similarly as in the case of the Hilbert transform in the previous section. In order to view  $H_{\delta}f$ as a perturbation of the Hilbert transform, we introduce the variables
\begin{align}\label{eq:txx}
\tx=\frac{e^{2\pi\delta x}-1}{2\pi\delta}, \qquad \ty=\frac{e^{2\pi\delta y}-1}{2\pi\delta},
\end{align}
which coincide with $x$ and $y$ in the limit $\delta\to 0$.
Then, the kernel of the modified Hilbert transform, 
\begin{align*}
k_\delta(x, y)=\delta\coth\big(\pi\delta(x-y)\big)
=\delta\frac{e^{2\pi\delta x}+{e^{2\pi\delta y}}}{e^{2\pi\delta x}-{e^{2\pi\delta y}}},
\end{align*}
in the new variables corresponds to
\begin{align*}
\tilde k_\delta(\tx,\ty)=\delta\frac{\tx+\ty+\frac{1}{\pi\delta}}{\tx-\ty}
=\left(\frac{1}{\pi}+2\delta\tx\right)\frac{1}{\tx-\ty}-\delta, 
\end{align*}
which in the limit $\delta\to 0$ coincides with the kernel  of the Hilbert transform $\frac{1}{\pi}\frac{1}{x-y}$.

Let thus $\tilde H_\delta$ denote the operator associated with the kernel $\tilde k_\delta(\tx,\ty)$, i.e.,
\begin{align}\label{eq;deftHd}
\tilde H_\delta F(\tx)
&=\text{p.v.}\int_\R F(\ty)\left(\left(\frac{1}{\pi}+2\delta\tx\right)\frac{1}{\tx-\ty}-\delta\right) d\ty.
\end{align}
We have the following stability result for it, which in the limit $\delta \rightarrow 0$ recovers the result of Theorem ~\ref{thm:qucHilbert_a}:

\begin{lem}\label{lem:qucTtildeH}
Let  $\tilde I, \tilde J \Subset(0,\infty)$ be  two  open, bounded, non-empty subsets such that $\tilde I\cap\tilde J=\emptyset$ with $\tilde J$ located to the right of $\tilde I$  and let $\delta>0$. Let {$\mu=\max\{\frac{\sup \tilde I+\inf \tilde J}{2},1\}$}.
Then there exist a universal constant $C_0>0$ and   constants  $\tilde C=\tilde C(\mu^{-1} |\tilde I|)>0$, $\nu=\nu(\tilde I, \tilde J)>1$ and  $C=C(\tilde I, \tilde J, \delta)>0$  such that for any $F\in C^\infty_c(\tilde I)$ we have
\begin{align*}
\|F\|_{L^2(\tilde I)}
&\leq C_0^\nu e^{\tilde C\nu \mu \frac{\|F'\|_{L^2(\tilde I)}}{\|F\|_{L^2(\tilde I)}}}\left(C\|\tilde H_\delta F \|_{L^2(\tilde J)} +\delta^{\frac{\nu}{2}}\mu^{\frac{\nu-1}{2}}|F_0|\right),
\end{align*}
where $F_0=\int_\R F(\tx)d\tx$.
\end{lem}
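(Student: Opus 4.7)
The proof mirrors that of Theorem~\ref{thm:qucHilbert_a}, with two modifications: first, the kernel $\tilde k_\delta$ contains extra terms which produce a correction involving $F_0=\int F$; second, a preliminary rescaling by $\mu$ is needed, since $\tilde I\Subset(0,\infty)$ need not lie in $(0,1)$ as required by the moment lemma. The algebraic starting point is the identity, valid for $\tx>\sup\tilde I$,
\begin{align*}
\big(\tfrac{1}{\pi}+2\delta\tx\big)\,G(\tx) = \tilde H_\delta F(\tx)+\delta F_0,\qquad G(\tx):=\sum_{j=0}^\infty F_j\tx^{-j-1},
\end{align*}
which is obtained by expanding $(\tx-\ty)^{-1}=\tx^{-1}\sum_{j\geq 0}(\ty/\tx)^j$ in~\eqref{eq;deftHd} and which plays the role of~\eqref{eq:Hexp}.

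Set $\hat F(y):=F(\mu y)$ so that $\hat F$ is supported in $\hat I:=\mu^{-1}\tilde I$; the choice of $\mu$ ensures both $\mu>\sup\tilde I$ (so that $\hat I\Subset(0,1)$) and that $\hat I^{-1}$ and $\hat J:=\mu^{-1}\tilde J$ both sit strictly to the right of $\overline{\hat I}$, so that their convex hull avoids $\overline{\hat I}$. A direct computation yields the scaling identities
\begin{align*}
\hat F_j=\mu^{-j-1}F_j,\;\; \|\hat F\|_{L^2(\hat I)}=\mu^{-\tfrac12}\|F\|_{L^2(\tilde I)},\;\; \tfrac{\|\hat F'\|_{L^2(\hat I)}}{\|\hat F\|_{L^2(\hat I)}}=\mu\tfrac{\|F'\|_{L^2(\tilde I)}}{\|F\|_{L^2(\tilde I)}},\;\; \tilde H_\delta F(\mu\cdot)=\tilde H_{\delta\mu}\hat F.
\end{align*}
Applying Lemma~\ref{lem:moments} with $\gamma_j=1$ to $\hat F$ on $\hat I$ and using the change of variables $z=(\tx')^{-1}:\hat I\to\hat I^{-1}$ to identify $\|\sum_j\hat F_j z^j\|_{L^2(\hat I)}=\|\hat G\|_{L^2(\hat I^{-1})}$ with $\hat G(\tx'):=\sum_j\hat F_j(\tx')^{-j-1}$, we obtain
\begin{align*}
\|\hat F\|_{L^2(\hat I)}\leq C_0\,e^{\tilde C\mu\|F'\|_{L^2(\tilde I)}/\|F\|_{L^2(\tilde I)}}\,\|\hat G\|_{L^2(\hat I^{-1})},\qquad \tilde C=\tilde C(\mu^{-1}|\tilde I|).
\end{align*}

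The smallness of $\hat G$ is then propagated from $\hat J$ to $\hat I^{-1}$ by analytic continuation. Since $\hat G(\tx')=\int\hat F(\ty')/(\tx'-\ty')\,d\ty'$ is analytic on $\R\setminus\overline{\hat I}$ and satisfies Cauchy-type derivative bounds with $M\lesssim\|\hat F\|_{L^2(\hat I)}$, exactly as in~\eqref{eq:HfM} (note that $M$ is independent of $\hat F_0$ -- this is the crucial point, and the reason for propagating $\hat G$ rather than $\tilde H_{\delta\mu}\hat F$), Lemma~\ref{lem:analytcont} with $\Omega=\conv(\hat I^{-1}\cup\hat J)$ and $\omega=\hat J$ yields
\begin{align*}
\|\hat G\|_{L^2(\hat I^{-1})}\leq C\|\hat F\|_{L^2(\hat I)}^{1-\theta}\|\hat G\|_{L^2(\hat J)}^\theta,\qquad \theta=\theta(\tilde I,\tilde J)\in(0,1),
\end{align*}
while on $\hat J$ the rescaled algebraic identity $(\tfrac1\pi+2\delta\mu\tx')\hat G=\tilde H_{\delta\mu}\hat F+\delta\mu\hat F_0$ combined with $\tfrac1\pi+2\delta\mu\tx'\geq\tfrac1\pi$ gives $\|\hat G\|_{L^2(\hat J)}\lesssim\|\tilde H_{\delta\mu}\hat F\|_{L^2(\hat J)}+\delta\mu|\hat F_0|$.

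Combining the three ingredients, absorbing the $\|\hat F\|_{L^2(\hat I)}^{1-\theta}$ factor into the left-hand side, raising to the power $\nu:=1/\theta>1$, and undoing the rescaling via $\hat F_0=\mu^{-1}F_0$, $\|\hat F\|_{L^2(\hat I)}=\mu^{-1/2}\|F\|_{L^2(\tilde I)}$, and $\|\tilde H_{\delta\mu}\hat F\|_{L^2(\hat J)}=\mu^{-1/2}\|\tilde H_\delta F\|_{L^2(\tilde J)}$ produces the claimed inequality. The main technical obstacle is the final bookkeeping step: to recover the precise half-integer exponents $\delta^{\nu/2}\mu^{(\nu-1)/2}$ on the $|F_0|$ term one must carefully split the $\theta$-power of the binomial $(\|\tilde H_{\delta\mu}\hat F\|_{L^2(\hat J)}+\delta\mu|\hat F_0|)^\theta$ (for instance via a Young-type inequality) so that the $\mu^{-1/2}$ scaling of the $L^2$-norms and the $\nu$-th power produced by inverting the interpolation inequality combine correctly; the prefactor $C_0^\nu$ and the exponent $\tilde C\nu\mu\|F'\|/\|F\|$ then emerge from the same scheme.
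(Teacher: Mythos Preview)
Your approach is essentially the paper's: rescale to $(0,1)$, apply the moment lemma with $\gamma_j=1$, pass to $\hat I^{-1}$ via $z\mapsto z^{-1}$, and propagate analytically from $\hat J$. There is, however, one substantive difference in the order of operations, and it is precisely where you hand-wave at the end.

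The paper first uses the algebraic identity on $\tilde I^{-1}$ to split
\[
\Big\|\sum_j F_j\tx^{-j-1}\Big\|_{L^2(\tilde I^{-1})}\le \pi\|\tilde H_\delta F\|_{L^2(\tilde I^{-1})}+\Big(\tfrac{\pi\delta}{2}\Big)^{1/2}|F_0|,
\]
the $\delta^{1/2}$ arising from $\int_1^\infty(\pi^{-1}+2\delta\tx)^{-2}\,d\tx\le\frac{\pi}{2\delta}$. It then propagates $\tilde H_\delta F$ (not $G$) from $\tilde J$ to $\tilde I^{-1}$, obtaining the interpolation factor $\|F\|^{1-\theta}$ only on the first summand. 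The second summand is handled by the trick $|F_0|=|F_0|^\theta|F_0|^{1-\theta}\le|F_0|^\theta\|F\|^{1-\theta}$, so that after dividing by $\|F\|^{1-\theta}$ and raising to the power $\nu=1/\theta$ one gets the term $\delta^{\nu/2}|F_0|$. The rescaling then produces the extra $\mu^{(\nu-1)/2}$.

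You instead propagate $\hat G$ first and split on $\hat J$ afterwards. This places the $F_0$ contribution \emph{inside} the $\theta$-power from the very start, so after raising to the $\nu$-th power you obtain $\delta\mu^{1/2}|F_0|$ (or $\delta^{1/2}|F_0|$ if you also use the integral bound on $\hat J$), not $\delta^{\nu/2}\mu^{(\nu-1)/2}|F_0|$. No Young-type rearrangement recovers the paper's exponent from your ordering: the $\nu/2$ in $\delta^{\nu/2}$ comes specifically from having the $\delta^{1/2}$ factor \emph{outside} the analytic-continuation interpolation, which your scheme does not allow. Your estimate is still a valid logarithmic stability bound (and indeed is used the same way in Proposition~\ref{prop:qucTHilbert}), but it does not yield the exponents stated in the lemma. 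To match the statement exactly, split before propagating: bound $\|\hat G\|_{L^2(\hat I^{-1})}$ via the algebraic identity on $\hat I^{-1}$, then apply Lemma~\ref{lem:analytcont} to $\tilde H_{\delta\mu}\hat F$ rather than to $\hat G$.
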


\begin{proof}
Let assume for the moment that $\tilde I\Subset (0,1)$ and $\tilde J\Subset(1,\infty)$ and 
let $F\in C^\infty_c(\tilde I)$. 
Then if $\tx>1$, we have, similarly as in \eqref{eq:Hexp}, the following expansion:
\begin{align}\label{eq:tHdexp}
\begin{split}
\tilde H_\delta F(\tx)
&=\int_\R F(\ty)\left(\frac{\pi^{-1}+2\delta \tx}{\tx}\sum_{j=0}^\infty \left(\frac{\ty}{\tx}\right)^j-\delta\right) d\ty
\\
&=\Big(\frac{1}{\pi}+2\delta \tx\Big)\sum_{j=0}^\infty F_j \tx^{-j-1}-\delta F_0.
\end{split}
\end{align}

By  Lemma ~\ref{lem:moments} with $\gamma_j=1$, 
\begin{align*}
\|F\|_{L^2(\tilde I)}
&\leq C_0e^{\tilde C (N_F+1)}\Big\|\sum_{j=0}^\infty F_jz^j\Big\|_{L^2(\tilde I)},
\end{align*}
where $N_F+1\simeq\frac{\|F'\|_{L^2(\tilde I)}}{\|F\|_{L^2(\tilde I)}}$ and $\tilde C$ depends only on $|\tilde I|$.
Changing variables according to $z=\tx^{-1}$ and comparing with  \eqref{eq:tHdexp} we deduce 
\begin{align*}
\Big\|\sum_{j=0}^\infty F_jz^j\Big\|_{L^2(\tilde I)}
\leq \Big\|\sum_{j=0}^\infty F_j\tx^{-j-1}\Big\|_{L^2(\tilde I^{-1})}
&\leq \Big\|\frac{\tilde H_\delta F(\cdot)}{\pi^{-1}+2\delta\tx}\Big\|_{L^2(\tilde I^{-1})} +\delta |F_0|\Big\|\frac{1}{{\pi^{-1}+2\delta\tx}}\Big\|_{L^2(\tilde I^{-1})}
\\&
\leq \pi \|\tilde H_\delta F(\cdot) \|_{L^2(\tilde I^{-1})} + \left(\frac{\pi\delta}{2}\right)^{\frac{1}{2}}|F_0|, 
\end{align*}
with  $\tilde I^{-1}\Subset(1,\infty)$ and where  we have bounded the last norm by observing that
\begin{align*}
\int_{\tilde I^{-1}} \frac{1}{(\pi^{-1}+2\delta\tx)^2}d\tx
\leq\int_{1}^\infty \frac{1}{(\pi^{-1}+2\delta\tx)^2}d\tx
=\frac{1}{2\delta}\frac{1}{{\pi^{-1}+2\delta}}\leq \frac{\pi}{2\delta}.
\end{align*}

At this point, we are interested in estimating the norm $\|\tilde H_\delta F(\cdot) \|_{L^2(\tilde I^{-1})}$ in terms of $\|\tilde H_\delta F(\cdot) \|_{L^2(\tilde J)}$.
If $\tilde I^{-1}\subseteq \tilde J$, this  follows immediately. Otherwise, we apply Lemma ~\ref{lem:analytcont} with $\Omega=\text{conv}(\tilde I^{-1}\cup\tilde J)$, in which $\tilde H_\delta F$ is (real) analytic, and $\omega = \tilde{J}$. For any $\tx\in\Omega$ we have 
\begin{align}\label{eq:tHFM}
\begin{split}
|\big(\tilde H_\delta F\big)(\tilde x)|
&=\left| \int_{\tilde I}\left(\frac{\pi^{-1}+2\delta \tilde{y}}{\tilde{x}- \tilde{y}}+\delta\right) F(\ty)d\ty \right|
\leq \left(\frac{\pi^{-1}+2\delta}{\rho}+\delta \right)\|F\|_{L^2(\tilde I)},
\\
\Big|\frac{d^k}{d\tx^k}\big(\tilde H_\delta F\big)(\tilde x)\Big|
&=k! \left|\int_{\tilde I}\frac{\pi^{-1}+2\delta \tilde{y}}{(\tilde{x}-\tilde{y})^{1+k}}F(\ty)d\ty \right|
\leq k!\left(\frac{\pi^{-1}+2\delta}{\rho^{1+k}} \right)\|F\|_{L^2(\tilde I)}, \quad k\in\N,
\end{split}
\end{align}
with $\rho=\dist(\tilde I, \Omega)>0$ (which follows by the assumption that $\tilde{I}\Subset (0,1)$, $\tilde{J} \Subset (1,\infty)$).  Then \eqref{eq:analytcontcond} holds with $M=\left(\frac{\pi^{-1}+2\delta}{\rho}+\delta \right)\|F\|_{L^2(\tilde I)}$ and thus
there exist $C>0$ and $\theta\in(0,1)$ depending on $\tilde I$ and $\tilde J$ such that 
\begin{align}\label{eq:tHFancont}
\|\tilde H_\delta F\|_{L^2(\tilde I^{-1})}
\leq C \left(\frac{\pi^{-1}+2\delta}{\rho}+\delta \right)^{1-\theta}\|F\|_{L^2(\tilde I)}^{1-\theta} \|\tilde H_\delta F\|_{L^2(\tilde J)}^\theta.
\end{align}
Finally, notice also that $|F_0|\leq \|F\|_{L^2(\tilde I)}$. Combining all these considerations we obtain
\begin{align*}
\|F\|_{L^2(\tilde I)}\leq C_0e^{\tilde C (N_F+1)}\left(C\|\tilde H_\delta F\|_{L^2(\tilde J)}^\theta+\delta^{\frac{1}{2}}|F_0|^\theta\right)\|F\|_{L^2(\tilde I)}^{1-\theta},
\end{align*}
where $C_0$ is a universal constant and $C$ depends on $\tilde I, \tilde J$ and $\delta$. This estimate finally implies
\begin{align}\label{eq:qucTtildeHint}
\|F\|_{L^2(\tilde I)}
&\leq C_0^\nu e^{\tilde C\nu \frac{\|F'\|_{L^2(\tilde I)}}{\|F\|_{L^2(\tilde I)}}}\left(C\|\tilde H_\delta F \|_{L^2(\tilde J)} +\delta^{\frac{\nu}{2}}|F_0|\right),
\end{align}
where $\nu=\theta^{-1}$, which gives us the desired result since {$\mu\geq 1$}. 

In general, for any $\tilde I$ and $\tilde J$, let us consider $\lambda=2(\sup \tilde I+\inf \tilde J)^{-1}$ and the changes of variables $\tx'=\lambda\tx$.
Then $\tilde I$, $\tilde J$  are transformed to $\tilde I'$, $\tilde J'$ satisfying $\tilde I'\Subset (0,1)$,  $\tilde J'\Subset(1,\infty)$.
Let $\tilde F(\tx')=F(\frac{\tx'}{\lambda})$, for which \eqref{eq:qucTtildeHint} holds, and for which
\begin{align*}
\tilde H_\delta \tilde F(\tx')&=\int_{\tilde I'}\delta\frac{\tx'+\ty'+\frac{1}{\pi\delta}}{\tx'-\ty'} \tilde F(\ty')d\ty'
=\left(\int_{\tilde I} (\lambda\delta)\frac{\tx+\ty+\frac{1}{\pi\lambda\delta}}{\tx-\ty}  F(\ty)d\ty\right)_{\tx=\frac{\tx'}{\lambda}}
=(\tilde H_{\lambda\delta}F)\Big(\frac{\tx'}{\lambda}\Big).
\end{align*}
Therefore,
\begin{align*}
\|F\|_{L^2(\tilde I)}
&\leq C_0^\nu e^{\tilde C\nu\lambda^{-1} \frac{\|F'\|_{L^2(\tilde I)}}{\|F\|_{L^2(\tilde I)}}}\left(C\|\tilde H_{\lambda\delta} F \|_{L^2(\tilde J)} +\delta^{\frac{\nu}{2}}\lambda^{\frac{1}{2}}|F_0|\right),
\end{align*}
where $\tilde C$ depends on $|\tilde I'|=\lambda|\tilde I|$. Renaming $\lambda \delta$ as $\delta$ and $\mu=\lambda^{-1}$, the desired result follows.
\end{proof}

With Lemma ~\ref{lem:qucTtildeH} in hand, we now address the proof of 
 Proposition ~\ref{prop:qucTHilbert} by reducing the statement for the function $f$ from Proposition ~\ref{prop:qucTHilbert} to a suitable function $F$ (depending on $f$) as in Lemma ~\ref{lem:qucTtildeH}.

\begin{proof}[Proof of Proposition ~\ref{prop:qucTHilbert}]
Let us assume without loss of generality that $I$ and $J$ are located in the positive real half-line with $J$ located to the right of $I$.
Let $f\in C^\infty_c(I)$ and  consider the function
\begin{align}\label{eq:Ff}
F(\tx)=\frac{1}{1+2\pi\delta\tx} f\Big(\frac{1}{2\pi\delta}\log(1+2\pi\delta \tx)\Big),
\end{align}
which is compactly supported in $\tilde I=\{\frac{e^{2\pi\delta x}-1}{2\pi\delta}: x\in I\}$ and which satisfies
\begin{align}\label{eq:HdFf}
\tilde H_\delta F(\tx)=(H_\delta f)\Big(\frac{1}{2\pi\delta}\log(1+2\pi\delta \tx)\Big).
\end{align}
 Let $\tilde J=\{\frac{e^{2\pi\delta x}-1}{2\pi\delta}: x\in J\}\neq \emptyset$.  Notice that $\tilde I,\tilde J$  satisfy the hypothesis of Lemma ~\ref{lem:qucTtildeH} and therefore
\begin{align}\label{eq:qucF}
\|F\|_{L^2(\tilde I)}
&\leq C_0^\nu e^{\tilde C\nu \mu\frac{\|F'\|_{L^2(\tilde I)}}{\|F\|_{L^2(\tilde I)}}}\left(C\|\tilde H_\delta F \|_{L^2(\tilde J)} +\delta^{\frac{\nu}{2}}\mu^{\frac{\nu-1}{2}}|F_0|\right),
\end{align}
where $\mu=\max\{\frac{\sup \tilde I+\inf \tilde J}{2},1\}$, $\tilde C>0$ depends on $\mu^{-1} |\tilde I|$, and $C>0$ and $\nu>1$ depend on $\tilde I$ and $\tilde J$.

Now we seek to estimate the norms which appear in \eqref{eq:qucF} in terms of $f$. Let $b=\sup I>0$.
By \eqref{eq:Ff}, \eqref{eq:HdFf} and \eqref{eq:txx}  we have
\begin{align}\label{eq:estimatesFf}
\begin{split}
\|F\|_{L^2(\tilde I)}&=\|f(\cdot) e^{-\pi\delta x}\|_{L^2(I)}
\geq e^{-\pi\delta b}\|f\|_{L^2(I)},
\\
\|F'\|_{L^2(\tilde I)}&=\|(f'(\cdot)-2\pi\delta f(\cdot)) e^{-3\pi\delta x}\|_{L^2(I)}\leq \|f'\|_{L^2(I)}+2\pi\delta \|f\|_{L^2(I)} ,
\\
\|\tilde H_\delta F\|_{L^2(\tilde J)}&=\|(H_\delta f)(\cdot) e^{\pi\delta x}\|_{L^2(J)}\leq  e^{\pi\delta\sup J} \|H_\delta f \|_{L^2(J)}.
\end{split}
\end{align}
Moreover, by construction $F_0=f_0=\int_I f(x)dx$. Indeed,
\begin{align*}
F_0&=\int_{\tilde I} F(\tx) d\tx=\int_{\tilde I} \frac{1}{1+2\pi\delta\tx} f\Big(\frac{1}{2\pi\delta}\log(1+2\pi\delta \tx)\Big)d\tx\\
&=\int_I \frac{1}{e^{2\pi\delta x}} f(x) e^{2\pi\delta x}dx=\int_I f(x)dx=f_0,
\end{align*}
where we have applied the change of variables \eqref{eq:txx}.
Inserting all these bounds into \eqref{eq:qucF}, we arrive at
\begin{align}\label{eq:estfHdf0}
\|f\|_{L^2(I)}
&\leq C_0^\nu e^{\pi\delta b}e^{\tilde C\nu\mu  e^{\pi\delta b}\left(\frac{\|f'\|_{L^2(I)}}{\|f\|_{L^2(I)}}+2\pi\delta\right)}\Big(C\|H_\delta f\|_{L^2(J)}+\delta^{\frac{\nu}{2}}\mu^{\frac{\nu-1}{2}} |f_0|\Big).
\end{align}
Finally, we discuss the conditions from the proposition under which the
estimate  \eqref{eq:qucTHilbert} can be deduced:

\begin{enumerate}[label=\textit{(\roman*)}]
\item If $f_0=0$, then \eqref{eq:qucTHilbert} immediately follows from \eqref{eq:estfHdf0}.

\item If $|f_0|\leq  C'\|H_\delta f\|_{L^2(J)}$, where $ C'$ may depend on $\delta$, $I$ and $J$ but not on $f$, then
\eqref{eq:qucTHilbert} is also achieved directly.
This happens in particular if $f$ has a constant sign, since 
in this case
\begin{align*}
|f_0|
=\int_I |f(y)|dy
&\leq \frac{1}{ \coth\big(\pi\delta\dist(I,J)\big) }\int_I\coth\big(\pi\delta(x-y)\big)|f(y)|dy\\
&\leq \frac{1}{\delta |\coth\big(\pi\delta\dist(I,J)\big)| }|H_\delta f(x)|, \quad x\in J.
\end{align*}
Moreover, if $f$ has a constant sign, then $F$ also has a constant sign and thus by \eqref{eq:tHdexp} for $\tx\in\tilde J$ we have
\begin{align*}
\delta |F_0|\leq \left(\frac{1}{\pi}+\delta \tx\right)\Big|\sum_{j=0}^\infty F_j\tx^{-j-1}\Big|\leq |\tilde H_\delta F(\tx)|.
\end{align*}
As a consequence, also in the proof of Lemma ~\ref{lem:qucTtildeH} we may ignore the contribution depending on $F_0$ in the case that $f$ (and $F$) has a constant sign.

\item Last but not least, since $|f_0|\leq |I|^{\frac{1}{2}}\|f\|_{L^2(I)}$, we can always absorb the right hand side contribution resulting from the presence of $f_0$, i.e. the contribution 
\begin{align}\label{eq:THcond}
 \left(C_0e^{\tilde C\mu  e^{\pi\delta b}\left(\frac{\|f'\|_{L^2(I)}}{\|f\|_{L^2(I)}}+2\pi\delta\right)}(\mu\delta)^{\frac{1}{2}} \right)^{\nu} \mu^{-\frac{1}{2}}|I|^{\frac{1}{2}} e^{\pi\delta b}\|f\|_{L^2(I)},
\end{align}
into the left hand side of \eqref{eq:estfHdf0} if $\delta \in (0,1)$ is chosen sufficiently small depending on $\frac{\|f\|_{H^1(I)}}{\|f\|_{L^2(I)}}$. 
More precisely, recalling that $\mu \geq 1$, we choose $\delta$ to be small enough such that it satisfies the following estimate
\begin{align}
\label{eq:relation}
 C_0 e^{\pi\delta b} e^{\tilde C \mu  e^{\pi\delta b}\left(\frac{\|f'\|_{L^2(I)}}{\|f\|_{L^2(I)}}+2\pi\delta\right)}\delta^{\frac{1}{2}}
\leq \frac{1}{2\max\{1,|I|^{\frac{1}{2}}\}}.
\end{align}
This is always possible for small enough values of $\delta>0$, since, keeping track of the constants, we have
\begin{align*}
1\leq \mu&<{\max\{\inf \tilde J,1\}}= {\max\left\{1,\frac{e^{2\pi\delta \inf J}-1}{2\pi\delta}\right\}} = \max\left\{ 1 , \inf J + O(\delta) \right\},\\
\tilde C&=6.5-\log(\mu^{-1}|\tilde I|) \leq 6.5 + \log(\mu)- \log(|\tilde{I}|)\\
&\leq 6.5 + \log(\max\left\{ 1 , \inf J + O(\delta) \right\})+ |\log(|I| + O(\delta) )| ,
\end{align*}
which is uniformly bounded as $\delta \rightarrow 0$.
Then, since the right hand side of \eqref{eq:relation} is bounded by $\frac{1}{2}$ and $\nu>1$, \eqref{eq:THcond} is bounded by $\frac{1}{2}\|f\|_{L^2(I)}$ and  therefore \eqref{eq:qucTHilbert} holds.

\end{enumerate}
\end{proof}

Exchanging the $L^2(J)$ measurements of $H_{\delta}(f)$ by $H^1(J)$ measurements of $H_{\delta}(f)$, we can provide an unconditional stability estimate for the modified Hilbert transform. 

\begin{prop}\label{prop:qucTHilbert1}
Let $I$ and $J$ be two  open, bounded, non-empty subsets of $\R$ such that $\overline I\cap\overline J=\emptyset$ and let $\delta>0$.
Then, there exists a constant $C=C(I,J,\delta)>0$ such that for any $f\in C^\infty_c(I)$
\begin{align*}
\|f\|_{L^2(I)}
&\leq C e^{C\frac{\|f\|_{H^1(I)}}{\| f\|_{L^2(I)}}}\|H_\delta f\|_{H^1(J)}.
\end{align*}
\end{prop}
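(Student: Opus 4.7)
The strategy is to upgrade Proposition \ref{prop:qucTHilbert} to an unconditional estimate by using the stronger $H^1(J)$-data to extract control of $|f_0|$. Recall that in Proposition \ref{prop:qucTHilbert}, the only obstacle to an unconditional estimate was the term $\delta^{\nu/2}\mu^{(\nu-1)/2}|f_0|$ appearing in \eqref{eq:estfHdf0}, which originates from the unique non-decaying contribution $-\delta F_0$ in the expansion \eqref{eq:tHdexp} of $\tilde H_\delta F$ as $\tx\to\infty$.

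First, I would replay the initial steps of the proof of Proposition \ref{prop:qucTHilbert} via the change of variables \eqref{eq:txx}-\eqref{eq:Ff}, reducing to a statement for $F$ and $\tilde H_\delta F$ on $\tilde I, \tilde J$; the identities \eqref{eq:estimatesFf}, together with an analogous computation for the derivative, show that $\|H_\delta f\|_{H^1(J)}$ controls $\|\tilde H_\delta F\|_{H^1(\tilde J)}$ up to constants depending only on $I, J, \delta$. It thus suffices to prove a strengthened variant of Lemma \ref{lem:qucTtildeH} in which $\|\tilde H_\delta F\|_{L^2(\tilde J)}$ is replaced by $\|\tilde H_\delta F\|_{H^1(\tilde J)}$ and the $|F_0|$ contribution is absent.

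The key new ingredient is a bound on $|F_0|$ from $H^1(\tilde J)$-data. Two facts come into play: (i) the one-dimensional Sobolev embedding $H^1(\tilde J)\hookrightarrow L^{\infty}(\tilde J)$ provides pointwise values of $\tilde H_\delta F$ on $\tilde J$; (ii) the expansion \eqref{eq:tHdexp} yields the asymptotic bound $|\tilde H_\delta F(\tx)-\delta F_0|\leq C\|F\|_{L^2(\tilde I)}/\tx$ as $\tx\to\infty$. Combining these with the analytic continuation of Lemma \ref{lem:analytcont} applied to $\tilde H_\delta F$ (using the derivative bounds \eqref{eq:tHFM}), one can propagate the pointwise information from $\tilde J$ to a large point $\tx^*\in\R$ chosen independently of $f$, yielding
\begin{align*}
\delta |F_0|
\leq |\tilde H_\delta F(\tx^*)|+\frac{C\|F\|_{L^2(\tilde I)}}{\tx^*}
\leq C\|F\|_{L^2(\tilde I)}^{1-\theta}\|\tilde H_\delta F\|_{H^1(\tilde J)}^{\theta}+\frac{C\|F\|_{L^2(\tilde I)}}{\tx^*},
\end{align*}
where $\theta\in(0,1)$ depends only on $\tx^*, I, J$.

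Finally, I would insert this bound into the analogue of \eqref{eq:estfHdf0}, choose $\tx^*$ sufficiently large in an $f$-independent way so that the contribution $(C/\tx^*)\|F\|_{L^2(\tilde I)}$, even after multiplication by the exponential prefactor from Lemma \ref{lem:qucTtildeH}, can be absorbed into the left-hand side, and apply Young's inequality to the mixed term $\|F\|^{1-\theta}\|\tilde H_\delta F\|_{H^1(\tilde J)}^{\theta}$ to produce a small $\|F\|$-piece to be absorbed and a larger $\|\tilde H_\delta F\|_{H^1(\tilde J)}$-piece. Undoing the changes of variables via \eqref{eq:estimatesFf} then yields the estimate in terms of $\|H_\delta f\|_{H^1(J)}$. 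The main obstacle will be the careful calibration of the two parameters: $\tx^*$ must be fixed independently of $f$ (keeping $\theta$ bounded away from $0$, hence preserving the linear-in-$\|f\|_{H^1(I)}/\|f\|_{L^2(I)}$ form of the exponent in the final estimate) yet large enough to render the tail contribution absorbable against the exponential prefactor.
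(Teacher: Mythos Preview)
Your strategy has a genuine gap in the absorption step, and it is precisely the obstacle you flag at the end without resolving. The term you wish to absorb is
\[
C_0^{\nu}\, e^{\tilde C\nu\mu\,\frac{\|F'\|_{L^2(\tilde I)}}{\|F\|_{L^2(\tilde I)}}}\cdot \delta^{\frac{\nu}{2}}\mu^{\frac{\nu-1}{2}}\cdot \frac{C}{\delta\,\tx^*}\,\|F\|_{L^2(\tilde I)},
\]
and the exponential prefactor here depends on $f$ through $N_F\simeq \|F'\|_{L^2(\tilde I)}/\|F\|_{L^2(\tilde I)}$, which is not bounded a priori. An $f$-independent choice of $\tx^*$ therefore cannot make this smaller than $\tfrac12\|F\|_{L^2(\tilde I)}$ uniformly in $f$. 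If instead you let $\tx^*$ depend on $N_F$, say $\tx^*\simeq e^{cN_F}$, then the analytic-continuation exponent $\theta=\theta(\tx^*)$ from Lemma~\ref{lem:analytcont} (which behaves like $\alpha^{\mathrm{dist}}$ along a chain of balls) decays like $\alpha^{e^{cN_F}}$. Young's inequality applied to $\|F\|^{1-\theta}\|\tilde H_\delta F\|_{H^1(\tilde J)}^{\theta}$ then produces a right-hand side factor of order $e^{c'/\theta}\sim \exp\bigl(c' e^{cN_F}\bigr)$, i.e.\ a doubly-exponential loss, not the single exponential $e^{C\|f\|_{H^1}/\|f\|_{L^2}}$ claimed in the statement.

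The paper avoids this obstruction by \emph{not} isolating $F_0$ via a far-away evaluation point. Instead, it re-indexes the expansion \eqref{eq:tHdexp} as $\tilde H_\delta F(\tx)=\sum_{j\geq 0}\tilde F_j\,\tx^{-j}$ with $\tilde F_0=\delta F_0$, $\tilde F_j=2\delta F_j+\pi^{-1}F_{j-1}$. The triangular relation between $\{F_j\}$ and $\{\tilde F_j\}$ is inverted explicitly, yielding $\sum_{j\leq N_F}|F_j|^2\leq Ce^{C(N_F+1)}\sum_{j\leq N_F}|\tilde F_j|^2$. Then the \emph{dual} moment estimate (Lemma~\ref{lem:moment_low}/Corollary~\ref{cor:moment_low2}) controls $\sum_{j\leq N_F}|\tilde F_j|^2$ by the $H^1$-norm of the generating function $\sum \tilde F_j z^j$, which after the change of variables $z=\tx^{-1}$ is exactly $\tilde H_\delta F$ on $\tilde I^{-1}$. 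Analytic continuation is applied to both $\tilde H_\delta F$ and $(\tilde H_\delta F)'$ on a \emph{fixed} domain $\Omega=\conv(\tilde I^{-1}\cup\tilde J)$, so $\theta$ stays bounded away from zero independently of $f$. The upshot is that the $H^1(\tilde J)$-data enters because the coefficients $\tilde F_j$ are \emph{not} signed (so one is forced to use \eqref{eq:mom2} rather than \eqref{eq:mom1}), not because one needs a pointwise Sobolev embedding at a distant point.
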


\begin{proof}
As in the proof of Proposition ~\ref{prop:qucTHilbert}, we consider the function $F$ given by \eqref{eq:Ff} and we derive the desired result from a suitable logarithmic stability estimate for $\tilde H_\delta$, which in contrast to the one in Lemma ~\ref{lem:qucTtildeH}, involves $H^1$ measurements of $H_{\delta}$.

Let $\tilde I, \tilde J$ be as in Lemma ~\ref{lem:qucTtildeH}. We claim that for $F\in C^\infty_c(\tilde I)$
\begin{align}\label{eq:THH1F}
\|F\|_{L^2(\tilde I)}
&\leq C e^{C \frac{\|F'\|_{L^2(\tilde I)}}{\|F\|_{L^2(\tilde I)}}}\|\tilde H_\delta F \|_{H^1(\tilde J)},
\end{align}
where $C$ depends on $\tilde I $ and $\tilde J$.
In order to prove this bound, we first rewrite \eqref{eq:tHdexp} as
\begin{align*}
\tilde H_\delta F(\tx)=\sum_{j=0}^\infty \tilde F_j x^{-j},
\end{align*}
where 
\begin{align*}
\tilde F_0 &= \delta F_0,\quad 
\tilde F_j = 2\delta F_j +\frac{1}{\pi} F_{j-1}, \ j\geq 1. 
\end{align*}

Let us assume for the moment that $\tilde I\Subset (0,1)$ and $\tilde J\Subset (1,\infty)$. Notice that by \eqref{eq:fest} and Lemma ~\ref{lem:moment_low}, we have respectively
\begin{align}\label{eq:THH1est1}
\|F\|_{L^2(\tilde I)}^2&\leq C_0 e^{C(N_F+1)}\sum_{j=0}^{N_F} |F_j|^2,
\\
\label{eq:THH1est2}
\sum_{j=0}^{N_F} |\tilde F_j|^2&\leq  e^{C(N_F+1)}\Big(\Big\|\sum_{j=0}^\infty \tilde F_j z^j\Big\|_{L^2(\tilde I)}^2+\Big\|\frac{d}{dz}\sum_{j=0}^\infty \tilde F_j z^j\Big\|_{L^2(\tilde I)}^2\Big),
\end{align}
where $N_F+1\simeq \frac{\|F'\|_{L^2(\tilde I)}}{\|F\|_{L^2(\tilde I)}}$ and $C$ depends on $|\tilde I|$.

On the one hand, we then seek to estimate $\sum_{j=0}^{N_F} |F_j|^2$ in terms of  $\sum_{j=0}^{N_F} |\tilde F_j|^2$.
To this end, we observe that
\begin{align*}
F_j=\frac{1}{2\delta}\sum_{i=0}^j c_{ij} \tilde F_i, 
\end{align*}
with
\begin{align*}
c_{0j}=2\left(\frac{-1}{2\pi\delta}\right)^j, \qquad 
c_{ij}=\left(\frac{-1}{2\pi\delta}\right)^{j-i}, \quad 1\leq i\leq j.
\end{align*}
Therefore,
\begin{align*}
\sum_{j=0}^{N} |F_j|^2
&\leq \frac{N+1}{(2\delta)^2}
\sum_{i=0}^N\sum_{j=i}^N|c_{ij}|^2|\tilde F_i|^2.
\end{align*}
If $\frac{1}{2\pi\delta}\neq1$, we thus obtain
\begin{align*}
\sum_{j=0}^N |c_{0j}|^2&=4\sum _{j=0}^N \left(\frac{1}{2\pi\delta}\right)^{2j}
=4\frac{1-\left(\frac{1}{2\pi\delta}\right)^{2(N+1)}}{1-\left(\frac{1}{2\pi\delta}\right)^2},\\
\sum_{j=i}^N |c_{ij}|^2&=
\sum _{k=0}^{N-i} \left(\frac{1}{2\pi\delta}\right)^{2k}
=\frac{1-\left(\frac{1}{2\pi\delta}\right)^{2(N+1-i)}}{1-\left(\frac{1}{2\pi\delta}\right)^2},\quad i\geq 1.
\end{align*}
Otherwise, i.e. if $\frac{1}{2\pi\delta}=1$, we simply have $|c_{0j}|=2$ and $|c_{ij}|=1$ for $1\leq i\leq j$. Thus, we can estimate the sum by 
\begin{align*}
\sum_{j=i}^N|c_{ij}|^2\leq
\begin{cases}
\frac{4}{1-\left(\frac{1}{2\pi\delta}\right)^2} & \mbox{ if } \frac{1}{2\pi\delta}< 1,\\
2(N+1) & \mbox{ if } \frac{1}{2\pi\delta}= 1,\\
\frac{4 \left(\frac{1}{2\pi\delta}\right)^{2(N+1)}}{\left(\frac{1}{2\pi\delta}\right)^2-1} & \mbox{ if } \frac{1}{2\pi\delta}> 1.\\
\end{cases}
\end{align*}
Using also that $N+1\leq e^{N+1}$, we  infer
\begin{align}\label{eq:THH1est3}
\sum_{j=0}^{N_F} |F_j|^2
&\leq C e^{C(N_F+1)}
\sum_{i=0}^{N_F}|\tilde F_i|^2,
\end{align}
with $C$ depending only on $\delta$.

On the other hand, we seek to express the right hand side of \eqref{eq:THH1est2} in terms of $H_{\delta}$. To this end, we note that
\begin{align*}
\Big\|\sum_{j=0}^\infty \tilde F_j z^j\Big\|_{L^2(\tilde I)}
&=\Big\|\sum_{j=0}^\infty \tilde F_j \tx^{-j-1}\Big\|_{L^2(\tilde I^{-1})}\leq \Big\|\sum_{j=0}^\infty \tilde F_j \tx^{-j}\Big\|_{L^2(\tilde I^{-1})}=\|\tilde H_\delta F\|_{L^2(\tilde I^{-1})},\\
\Big\|\frac{d}{dz}\sum_{j=0}^\infty \tilde F_j z^j\Big\|_{L^2(\tilde I)}
&=\Big\|\tx\frac{d}{d\tx} \sum_{j=0}^\infty \tilde F_j \tx^{-j}\Big\|_{L^2(\tilde I^{-1})}
\leq  \frac{1}{\inf \tilde I}\Big\|\frac{d}{d\tx}\sum_{j=0}^\infty \tilde F_j \tx^{-j}\Big\|_{L^2(\tilde I^{-1})}\leq C\|(\tilde H_\delta F)'\|_{L^2(\tilde I^{-1})}.
\end{align*}
Together with \eqref{eq:THH1est1}, \eqref{eq:THH1est2} and  \eqref{eq:THH1est3}, we then conclude
\begin{align}\label{eq:THH1est4}
\|F\|_{L^2(\tilde I)}\leq C e^{C(N_F+1)}\left(\|\tilde H_\delta F\|_{L^2(\tilde I^{-1})}+\|(\tilde H_\delta F)'\|_{L^2(\tilde I^{-1})} \right).
\end{align}

In order to conclude the proof of \eqref{eq:THH1F}, we propagate the smallness from $\tilde J$ to $\tilde I^{-1}$ by applying Lemma ~\ref{lem:analytcont} to $\tilde H_\delta F$  and to $(\tilde H_\delta F)'$.  Let $\Omega=\conv(\tilde I^{-1}\cup J)$ and $\omega=J$.
For any $k\in \N_0$ and $x\in\Omega$ by the bounds in \eqref{eq:tHFM} for $\tilde H_\delta F$ we estimate
\begin{align*}
\Big|\frac{d^k}{d\tx^k}(\tilde H_\delta F)'(\tx)\Big|
\leq (k+1)!\rho^{-k}\left(\frac{\pi^{-1}+2\delta}{\rho^2}\right)\|F\|_{L^2(\tilde I)}
\leq k!\left(\frac{\rho}{e}\right)^{-k}\left(\frac{\pi^{-1}+2\delta}{\rho^2}\right)\|F\|_{L^2(\tilde I)},
\end{align*}
where $\rho=\dist(\tilde I, \Omega)>0$. Thus, \eqref{eq:analytcontcond} holds for $\tilde H_\delta F$  and  $(\tilde H_\delta F)'$ with $M=\left(\frac{\pi^{-1}+2\delta}{\min\{\rho,\rho^2\}}+\delta \right)\|F\|_{L^2(\tilde I)}$  and $\rho$ replaced by $\frac{\rho}{e}$. Thus, by Lemma ~\ref{lem:analytcont} and  \eqref{eq:THH1est4} 
\begin{align*}
\|F\|_{L^2(\tilde I)}\leq Ce^{C(N_F+1)}\left(\frac{\pi^{-1}+2\delta}{\min\{\rho,\rho^2\}}+\delta \right)^{1-\theta}\|F\|_{L^2(\tilde I)}^{1-\theta}\|\tilde H_\delta F\|_{H^1(\tilde J)}^\theta,
\end{align*}
where $\theta\in(0,1)$ and $C>0$ depend on $\tilde I$ and $\tilde J$, and  hence \eqref{eq:THH1F} follows. 
Applying the same rescaling as in the proof of Lemma ~\ref{lem:qucTtildeH} and renaming $\delta$, \eqref{eq:THH1F} is obtained for any $\tilde I$ and $\tilde J$ as in Lemma ~\ref{lem:qucTtildeH}.

Finally, let us assume without loss of generality that $I, J\subset \R_+$ with $J$ to the right of $I$, let $f\in C^\infty_c(I)$ and let $F$ be given by \eqref{eq:Ff}.
In addition to \eqref{eq:estimatesFf}, we have
\begin{align*}
\|(\tilde H_\delta F)'\|_{L^2(\tilde J)}&=\|(H_\delta f)'(\cdot)e^{-\pi\delta x}\|_{L^2(J)}\leq  \|(H_\delta f )'\|_{L^2(J)}.
\end{align*}
Inserting these estimates to \eqref{eq:THH1F}, the desired result follows.
\end{proof}

\begin{rmk}
We remark that if $f$ has constant sign, we have $\|(H_\delta f )'\|_{L^2(J)}\leq C \|(H_\delta f )\|_{L^2(J)}$, with $C$ depending on $I$, $J$ and $\delta$, so we recover a particular case of \textit{(ii)} in Proposition ~\ref{prop:qucTHilbert} from the estimate in Proposition ~\ref{prop:qucTHilbert1}. Indeed, we observe that for $x\in J$
\begin{align*}
|(H_\delta f )'(x)|&=\delta^2\pi \int_I\csch^2 \big(\pi\delta(x-y)\big) |f(y)|dy
=4\delta^2\pi \int_I \frac{e^{2\pi\delta (x+y)}}{(e^{2\pi\delta x}-e^{2\pi\delta y})^2}  |f(y)|dy\\
&= 4\delta \pi \sup_{y\in I}\left(\frac{e^{2\pi\delta (x+y)}}{e^{4\pi\delta x}-e^{4\pi\delta y}} \right)|H_\delta f(x)|\leq 4\delta \pi \frac{e^{2\pi\delta x} e^{2\pi\delta \sup I}}{e^{4\pi\delta x}-e^{4\pi\delta \sup I}} |H_\delta f(x)|.
\end{align*}
\end{rmk}

\begin{rmk}
\label{rmk:relax_suppII}
As in Remarks \ref{rmk:relax} and \ref{rmk:relax_support_HT} it would have been possible to relax the conditions of $f \in C_c^{\infty}(I)$ or $f\in H^1_0(I)$ to the condition that $f\in H^1(I)$ in the formulation of Propositions \ref{prop:qucTHilbert} and \ref{prop:qucTHilbert1}.
\end{rmk}

\subsection{The inverse fractional Laplacian}
In this section we consider the inverse fractional Laplacian for any  $\alpha\in\R_+\backslash\N_0$, which is given by 
\begin{align*}
-(-\Delta)^{-\alpha} f(x)=-c_\alpha\mathcal F^{-1}\big(|\cdot|^{-2\alpha}\mathcal F f(\cdot)\big)(x)=\int_\R \frac{f(y)}{|x-y|^{1-2\alpha}}dy.
\end{align*}
This corresponds to the Riesz potential $I_{2\alpha}$ for which we prove new quantitative stability estimates. We refer to the work of Riesz \cite{R38} for early qualitative results related to this. Similarly as in Proposition ~\ref{prop:qucTHilbert1}, we can always prove an unconditional estimate if we measure the data with respect to the $H^1(J)$ topology. As in Proposition ~\ref{prop:qucTHilbert}, under certain additional assumptions we may relax this to stability estimates with $L^2(J)$ measurements of the data.

\begin{prop}\label{prop:qucInvFrLapl}
Let $I$ and $J$ be two  open, bounded, non-empty subsets of $\R$ such that $\overline I\cap\overline J=\emptyset$ and let $\alpha\in\R_+$ such that $2\alpha\notin\N_0$. Then there exists $C=C(I,J,\alpha)>0$ such that for any $f\in C^\infty_c(I)$ 
\begin{align}\label{eq:qucInvFrLaplH1}
\|f\|_{L^2(I)}
&\leq C e^{C\frac{\| f'\|_{L^2(I)}}{\| f\|_{L^2(I)}}}\|(-\Delta)^{-\alpha}f\|_{H^1(J)}.
\end{align}
Suppose that in addition one of the following conditions holds:
\begin{enumerate}[label=(\roman*)]
\item $\alpha\in(0,\frac{1}{2})$,
\item $\int_I f(x)x^jdx=0$ for all $j\leq 2m-1$ if $\alpha\in(m,m+\frac{1}{2})$ and $j\leq 2m$ if $\alpha\in(m+\frac{1}{2},m+1)$, where in both cases $m \in \N_0$,
\item $f$ has constant sign.
\end{enumerate}
Then, the $H^1$-norm in \eqref{eq:qucInvFrLaplH1} can be replaced by the $L^2$-norm, i.e.,
\begin{align}\label{eq:qucInvFrLaplL2}
\|f\|_{L^2(I)}
&\leq C e^{C\frac{\| f'\|_{L^2(I)}}{\| f\|_{L^2(I)}}}\|(-\Delta)^{-\alpha}f\|_{L^2(J)}.
\end{align}
\end{prop}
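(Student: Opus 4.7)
The plan is to imitate the proof structure of Theorem~\ref{thm:qucHilbert_a}: first reduce to the case $I\Subset (0,1)$ and $J\Subset (1,\infty)$ with $J$ to the right of $I$ (as explained at the beginning of Section~\ref{sec:moments1d}); then expand the kernel of $(-\Delta)^{-\alpha}$ in a binomial series valid for $x\in (1,\infty)$, $y\in I$; next apply the moment stability result of Lemma~\ref{lem:moments}; and finally propagate the data estimate from $J$ to $I^{-1}$ via Lemma~\ref{lem:analytcont}. Concretely, for $x>y>0$ one has $(x-y)^{2\alpha-1}=x^{2\alpha-1}\sum_{j=0}^\infty\gamma_j (y/x)^j$ with $\gamma_j=\prod_{k=1}^{j}(k-2\alpha)/k=\Gamma(j+1-2\alpha)/(\Gamma(j+1)\Gamma(1-2\alpha))$. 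Since $2\alpha\notin\N_0$, no factor vanishes, so $\gamma_j\neq 0$ for all $j$, and Stirling gives $|\gamma_j|\sim |\Gamma(1-2\alpha)|^{-1} j^{-2\alpha}$. Integrating against $f$ yields
\begin{align*}
-(-\Delta)^{-\alpha}f(x)=\sum_{j=0}^\infty \gamma_j f_j\, x^{2\alpha-1-j}, \qquad x>1.
\end{align*}

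For the general $H^1(J)$ estimate \eqref{eq:qucInvFrLaplH1}, I would apply Lemma~\ref{lem:moments}, estimate \eqref{eq:mom2}, with weights $\gamma_j$ (non-zero, not necessarily signed), producing
\begin{align*}
\|f\|_{L^2(I)}\leq \frac{C e^{CN_f}}{\min_{j\leq N_f}|\gamma_j|}\Bigl(\|g\|_{L^2(I)}+\tfrac{1}{N_f+1}\|g'\|_{L^2(I)}\Bigr),
\end{align*}
where $g(z)=\sum_j \gamma_j f_j z^j$. The factor $(\min|\gamma_j|)^{-1}$ is only polynomial in $N_f$ and is absorbed into the exponential. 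The change of variables $z=1/x$ identifies $g(1/x)=x^{1-2\alpha}T(x)$ with $T:=-(-\Delta)^{-\alpha}f$, so $\|g\|_{L^2(I)}\simeq \|T\|_{L^2(I^{-1})}$ and $\|g'\|_{L^2(I)}\leq C\|T\|_{H^1(I^{-1})}$ (since $I^{-1}\Subset(1,\infty)$ is bounded away from zero and infinity). To pass from $I^{-1}$ to $J$, I would apply Lemma~\ref{lem:analytcont} to $T$ and $T'$ on $\Omega=\conv(I^{-1}\cup J)$: differentiating under the integral, $|T^{(k)}(x)|\leq k!\rho^{-k}M$ with $\rho=\dist(I,\Omega)>0$ and $M$ proportional to $\|f\|_{L^2(I)}$, exactly as in the derivation of \eqref{eq:HfM}. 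This yields \eqref{eq:qucInvFrLaplH1}.

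For the $L^2(J)$ estimate \eqref{eq:qucInvFrLaplL2} under the three additional hypotheses I would argue as follows. In case~(i), with $\alpha\in(0,1/2)$, every factor $k-2\alpha$ is positive, so $\gamma_j>0$ for all $j$, and I can invoke the positive-weight version \eqref{eq:mom1} of Lemma~\ref{lem:moments} instead of \eqref{eq:mom2}; this eliminates the $\|g'\|_{L^2(I)}$ term entirely, leaving only an $L^2$ measurement after the analytic continuation step. In case~(ii), the vanishing-moment hypothesis reduces the series to $g(z)=\sum_{j\geq 2m}\gamma_j f_j z^j$ (or $j\geq 2m+1$), and for these indices the sign of $\gamma_j$ is $(-1)^{2m}=+1$ or $(-1)^{2m+1}=-1$, respectively, so the nonzero weights have a common sign and, after possibly changing overall sign, \eqref{eq:mom1} again applies with positive weights $|\gamma_j|$. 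In case~(iii), if $f\geq 0$ the derivative $T'(x)=(2\alpha-1)\int_I(x-y)^{2\alpha-2}f(y)dy$ satisfies $|T'(x)|\leq |2\alpha-1|\dist(I,J)^{-1}\, T(x)$ on $J$, hence $\|T\|_{H^1(J)}\leq C\|T\|_{L^2(J)}$, so \eqref{eq:qucInvFrLaplH1} directly implies \eqref{eq:qucInvFrLaplL2}.

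The main obstacle is the bookkeeping in case~(ii): one must verify that the vanishing of the first $2m$ (or $2m+1$) moments is exactly what is needed for the surviving coefficients $\gamma_j$ to share a common sign, so that \eqref{eq:mom1} is applicable without incurring derivative losses. A secondary technical point is tracking the polynomial decay $|\gamma_j|\sim C_\alpha j^{-2\alpha}$ in Stirling's formula carefully enough to ensure that $\min_{j\leq N_f}|\gamma_j|^{-1}$ is indeed absorbed by the exponential prefactor $e^{CN_f}$ coming from Lemma~\ref{lem:moments}, which only requires a polynomial bound.
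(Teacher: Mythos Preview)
Your proposal is correct and follows essentially the same route as the paper: binomial expansion of the kernel, application of Lemma~\ref{lem:moments} (estimate \eqref{eq:mom2} in the general case, \eqref{eq:mom1} in the special cases), change of variables $z=x^{-1}$ to land on $I^{-1}$, and analytic continuation via Lemma~\ref{lem:analytcont} to reach $J$. The only cosmetic difference is in case~(ii): the paper applies \eqref{eq:mom1} with weights $|\gamma_j|$ and then splits $\sum_j|\gamma_j|f_jz^j=\sum_j\gamma_jf_jz^j-2\sum_{j\in J_\alpha}\gamma_jf_jz^j$ (where $J_\alpha$ is the finite set of wrong-sign indices, which the moment hypothesis kills), whereas you observe directly that under the hypothesis the signed and unsigned series coincide up to an overall sign---these are equivalent.
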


\begin{proof}
Let us assume that $f\in C^\infty_c(I)$ with $I\Subset(0,1)$. For
$x>1$, we have
\begin{align}\label{eq:invfracexp}
\begin{split}
-(-\Delta)^{-\alpha} f(x)&=\int_\R \frac{f(y)}{|x-y|^{1-2\alpha}}dy
=\int_\R \frac{1}{x^{1-2\alpha}}\frac{f(y)}{(1-\frac{y}{x})^{1-2\alpha}}dy
\\&
=\frac{1}{x^{1-2\alpha}}\int_\R f(y)\left(1+\sum_{j=1}^\infty \frac{(1-2\alpha)\dots(j-2\alpha)}{j!}\left(\frac{y}{x}\right)^j\right)dy
\\&
=\sum_{j=0}^\infty c_{j}f_j x^{-j-1+2\alpha},
\end{split}
\end{align}
with 
\[c_0=1, \qquad c_j=\prod_{k=1}^{j}\left(1-\frac{2\alpha}{k}\right), \quad j\geq 1.\]
Notice that if $\alpha\in (0,\frac 1 2)$,  $\{c_j\}_{j\in\N_0}$ is a decreasing sequence of positive numbers with $c_{j}\geq (1-2\alpha)^{j}$. 
Otherwise the signs of the coefficients $c_{j}$ oscillate up to a finite value of $j$ depending on $\alpha$, after which their absolute values start to decrease.
Moreover,
\begin{align}
\label{eq:cj_abs}
|c_j|\geq \left(1-\frac{2\alpha}{k_\alpha}\right)^j \mbox{ for some } k_\alpha\in\N.
\end{align}

Independently of the signs of the coefficients $c_j$, we can apply Lemma ~\ref{lem:moments} with $\gamma_j= c_j$ and obtain an estimate of the form \eqref{eq:mom2}, 
which using \eqref{eq:cj_abs} for the coefficients $|c_j|$ turns into
\begin{align}\label{eq:invfrac1}
\|f\|_{L^2(I)}
\leq \frac{C_0e^{C(N_f+1)}}{\left(1-\frac{2\alpha}{k_\alpha}\right)^{N_f}}\left(\Big\|\sum_{j=0}^\infty c_j f_j z^j\Big\|_{L^2(I)}+\Big\|\frac{d}{dz}\big(\sum_{j=0}^\infty c_j f_j z^j\big)\Big\|_{L^2(I)}\right),
\end{align}
with  $N_f+1\simeq \frac{\|f'\|_{L^2(I)}}{\| f\|_{L^2(I)}}$.
By the change of variables $z=x^{-1}$, we obtain
\begin{align*}
\Big\|\sum_{j=0}^\infty c_{j}f_j z^{j}\Big\|_{L^2(I)}
&\leq \Big\|\sum_{j=0}^\infty c_{j}f_j z^{j-2\alpha}\Big\|_{L^2(I)}
\\&\leq  \Big\|\sum_{j=0}^\infty c_{j}f_jx^{-j-1+2\alpha}\Big\|_{L^2(I^{-1})}
=\|(-\Delta)^{-\alpha} f\|_{L^2(I^{-1})},
\\
\Big\|\frac{d}{dz}\big(\sum_{j=0}^\infty c_j f_j z^j\big)\Big\|_{L^2(I)}
&\leq \Big\|x\frac{d}{dx}\big(x^{1-2\alpha}\sum_{j=0}^{\infty}c_jf_jx^{-j-1+2\alpha}\big)\Big\|_{L^2(I^{-1})}
\\&\leq |1-2\alpha|\sup_I x^{1-2\alpha} \Big\|\sum_{j=0}^\infty c_{j}f_j x^{j-2\alpha}\Big\|_{L^2(I^{-1})}\\
& \qquad+\sup_I x^{2-2\alpha} \Big\|\frac{d}{dx}\big(\sum_{j=0}^\infty c_{j}f_j x^{j-2\alpha}\big)\Big\|_{L^2(I^{-1})}
\\
&\leq C\left( \|(-\Delta)^{-\alpha} f\|_{L^2(I^{-1})}
+ \|\big((-\Delta)^{-\alpha} f\big)'\|_{L^2(I^{-1})}\right)
\end{align*}
with $ I^{-1}\Subset(1,\infty)$. Thus, \eqref{eq:invfrac1} implies
\begin{align}\label{eq:invfrac2}
\|f\|_{L^2(I)}
\leq \frac{Ce^{C(N_f+1) }}{\left(1-\frac{2\alpha}{k_\alpha}\right)^{N_f}}\left(\|(-\Delta)^{-\alpha} f\|_{L^2(I^{-1})}+\|\big((-\Delta)^{-\alpha} f\big)'\|_{L^2(I^{-1})}\right),
\end{align}
with $C$ depending on $I$ and $\alpha$.

In order to control $\|(-\Delta)^{-\alpha} f\|_{H^1(I^{-1})}$ by $\|(-\Delta)^{-\alpha} f\|_{H^1(J)}$, we only need to study the case when $I^{-1}\nsubseteq J$, since otherwise the estimate follows directly. 
We apply Lemma ~\ref{lem:analytcont} with $\Omega=\text{conv}(I^{-1}\cup J)\subset(1,\infty)$, where $(-\Delta)^{-\alpha} f$ is analytic, and $\omega=J$. Notice that for any $k\in\N_0$  and $x\in\Omega$ we have 
\begin{align*}
\left|\frac{d^k}{dx^k} \big((-\Delta)^{-\alpha}f\big)(x)\right|&=|(1-2\alpha)\dots(k-2\alpha)|\left|\int_I\frac{f(y)}{|x-y|^{k+1-2\alpha}}dy\right|
\leq C_\alpha k!\rho^{-k-1+2\alpha}\|f\|_{L^2(I)},\\
\left|\frac{d^k}{dx^k} \big((-\Delta)^{-\alpha}f\big)'(x)\right|&
\leq C_\alpha (k+1)!\rho^{-k-2+2\alpha}\|f\|_{L^2(I)}
\leq C_\alpha k! \left(\frac{\rho}{e}\right)^{-k}\rho^{-2+2\alpha}\|f\|_{L^2(I)},
\end{align*}
where $\rho=\dist(I, \Omega)>0$.
Then \eqref{eq:analytcontcond} holds  for both $(-\Delta)^{-\alpha}f$ and its derivative with $M=C_\alpha \frac{\rho^{2\alpha}}{\min\{\rho, \rho^2\}}\|f\|_{L^2(I)}$ and $\rho$ replaced by $\frac{\rho}{e}$.
By Lemma ~\ref{lem:analytcont}, \eqref{eq:invfrac2} turns into 
\begin{align*}
\|f\|_{L^2(I)}
\leq \frac{Ce^{C(N_f+1)}}{\left(1-\frac{2\alpha}{k_\alpha}\right)^{N_f}}\|f\|_{L^2(I)}^{1-\theta}\left(\|(-\Delta)^{-\alpha}f\|_{L^2(J)}^\theta+\|\big((-\Delta)^{-\alpha}f\big)'\|_{L^2(J)}^\theta\right),
\end{align*} 
where $\theta\in (0,1)$ and $C>0$ depend on $I, J$ and $\alpha$.
Therefore,  \eqref{eq:qucInvFrLaplH1} follows.

Let us now consider the particular settings \textit{(i)-(iii)} in which it is possible to improve the $H^1(J)$ to an $L^2(J)$-bound in \eqref{eq:qucInvFrLaplH1}:

\begin{enumerate}[label=\textit{(\roman*)}]
\item As explained above, if $\alpha\in (0,\frac 1 2)$, the sequence $\{c_j\}_{j\in\N_0}$ is a decreasing sequence of positive numbers.
Hence, we can apply  the estimate \eqref{eq:mom1} in Lemma ~\ref{lem:moments} with $\gamma_j= c_j$ to directly obtain that
\begin{align*}
\|f\|_{L^2(I)}
\leq \frac{C_0e^{C(N_f+1)}}{(1-2\alpha)^{N_f}}\Big\|\sum_{j=0}^\infty c_j f_j z^j\Big\|_{L^2(I)}.
\end{align*}
Proceeding as before to estimate the last norm in terms of $\|(-\D)^{-\alpha} f\|_{L^2(J)}$,  \eqref{eq:qucInvFrLaplL2} is obtained.

\item
If $\alpha>\frac{1}{2}$   the signs of the coefficients $c_{j}$ oscillate according to 
\begin{align*}
\{\sign(c_{j})\}_{j\in\N_0}=\{+,(-+)^m,+,+,\dots\},
&\mbox{ if }\alpha\in(m,m+\frac{1}{2}), \\
\{\sign(c_{j})\}_{j\in\N_0}=\{+,(-+)^m,-,-,\dots\},
&\mbox{ if }\alpha\in(m+\frac{1}{2},m+1),
\end{align*}
for any $m\in \N_0$.
If we consider  $\gamma_j=|c_j|$, we can  apply  the estimate \eqref{eq:mom1} in Lemma ~\ref{lem:moments}, yielding
\begin{align*}
\|f\|_{L^2(I)}
&\leq \frac{C_0e^{C(N_f+1)}}{(1-\frac{2\alpha}{k_\alpha})^{N_f}}\Big\|\sum_{j=0}^\infty |c_j| f_j z^j\Big\|_{L^2(I)}
\\ &\leq \frac{C_0e^{C(N_f+1)}}{(1-\frac{2\alpha}{k_\alpha})^{N_f}}
 \left(\Big\|\sum_{j=0}^\infty c_j f_j z^j\Big\|_{L^2(I)}+2\Big\|\sum_{j\in J_\alpha}c_j f_j z^j\Big\|_{L^2(I)}\right),
\end{align*}
where $J_\alpha=\{1,3,\dots,2m-1\}$ for $\alpha\in(m,m+\frac{1}{2})$ and  $J_\alpha=\{0,2,\dots,2m\}$ for $\alpha\in(m+\frac{1}{2},m+1)$.
The last term vanishes if the moments $\{f_j\}_{j\in J_\alpha}$ are zero and  \eqref{eq:qucInvFrLaplL2} can be  inferred repeating the previous estimates.

Up to now we have assumed that $I\subset (0,1)$. In general, the claim in \textit{(ii)} follows by a reduction to the setting in which $I \subset (0,1)$ by scaling and translation. Indeed, 
let $p, q$ be such that $\{y=px+q: x\in I\}=(0,1)$ and ${F}(y)=f(x)\in C^\infty_c(0,1)$. Therefore the moments of {$F$} are combinations of the integrals $\int_If(x)x^jdx$ according to
\begin{align*}
F_j=\int_0^1  F(y) y^jdy=p\int_I f(x)(pm+q)^j dx =
\sum_{k=0}^j {{j}\choose{k}}p^{k+1}q^{j-k} \int_If(x)x^kdx.
\end{align*}
If condition \textit{(ii)} is satisfied, it is clear that then $F_j=0$ for $j\in J_\alpha$. 

\item
Let us assume that $f$ has a constant sign.
Then for $x\in J$,
 \begin{align*}
|((-\Delta)^{-\alpha} f)'(x)|=|1-2\alpha|\int_I \frac{|f(y)|}{|x-y|^{1-2\alpha+1}}dy\leq \frac{|1-2\alpha|}{\dist(I, J)^{2-2\alpha} }|(-\Delta)^{-\alpha} f(x)|.
\end{align*}
Thus,  
\begin{align*}
\|(-\D)^{-\alpha} f\|_{H^1(J)}\leq C \|(-\D)^{-\alpha} f\|_{L^2(J)},
\end{align*}
where $C$ depends on $I, J$ and $\alpha$.
In this case \eqref{eq:qucInvFrLaplH1} reduces to \eqref{eq:qucInvFrLaplL2}.
\end{enumerate}
\end{proof}

\section{Quantitative Unique Continuation through Stability of the Moment Problem, Higher Dimensions}

\label{sec:moment_gen}

The previous ideas based on the stability of the moment problem can be also applied to prove quantitative unique continuation results for other operators like the Fourier transform or the Laplace transform, also in higher dimensions.

In our discussion of the higher dimensional problem, in this section we restrict ourselves to a class of operators which generalize the Fourier and the Laplace transforms. 
To this end, let $\alpha,\beta\in\R$, $(\alpha, \beta)\neq(0,0)$. 
We define the operator $T_{\alpha,\beta}$ acting on  compactly supported  functions in $\R^n$ as
\begin{align*}
\mathcal T_{\alpha,\beta} f(x)=\int_{\R^n} e^{(\alpha+i\beta) x\cdot y }f(y)dy.
\end{align*}
Notice that the Laplace and the Fourier transforms are particular cases of $\mathcal T_{\alpha, \beta}$:
\begin{itemize}
\item $T_{-1,0} f=\mathcal L f$,
\item $T_{0,-1} f=\mathcal F f$.
\end{itemize}

In addition to exploiting the previous ideas on the stability of the moment problem and the quantitative unique continuation for real analytic functions, our proof of the stability result for these operators also relies on the fact that $\mathcal T_{\alpha,\beta} f$ has an analytic extension to $\C^n$ by the Paley-Wiener-Schwartz theorem for compactly supported functions $f$. 

The logarithmic stability result for the operators $\mathcal T_{\alpha,\beta}$ is analogous to Theorem ~\ref{thm:qucHilbert_a} although the disjointness of the subsets $I$ and $J$ is not necessary since the kernel is entire.

\begin{prop}\label{prop:qucexphigher}
Let $ I$ and $J$ be two  open, bounded, non-empty subsets of $\R^n$.  Then for any $\alpha, \beta \in \R$ with $(\alpha, \beta) \neq (0,0)$ there exists $C=C(I, J,n,\alpha,\beta)>0$ such that for any $f\in C^\infty_c( I)$ 
\begin{align*}
\|f\|_{L^2( I)}
&\leq C e^{C\frac{\|\nabla f\|_{L^2( I)}} {\| f\|_{L^2( I)}}\left|\log \left(\frac{\|\nabla f\|_{L^2( I)}} {\| f\|_{L^2( I)}}\right)\right|} 
\|\mathcal T_{\alpha,\beta} f\|_{L^2( J)}.
\end{align*} 
\end{prop}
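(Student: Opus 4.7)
The plan is to adapt the strategy of the proof of Theorem \ref{thm:qucHilbert_a}, replacing the Neumann expansion of the Hilbert kernel by the Taylor expansion of the entire kernel $e^{(\alpha+i\beta) x\cdot y}$. After reducing by an affine change of variables to the case $I\Subset(0,1)^n$, we obtain for every $x\in\R^n$
\begin{equation*}
\mathcal T_{\alpha,\beta} f(x) \;=\; \sum_{j\in\N_0^n} \gamma_j\, f_j\, x^j,\qquad \gamma_j := \frac{(\alpha+i\beta)^{|j|}}{j!},
\end{equation*}
where $f_j := \int y^j f(y)\,dy$ are the moments of $f$. Since $\supp f \subset I$, the truncated moments $\tilde f_j$ coincide with $f_j$, so Lemma \ref{lem:moments} in the non-signed form \eqref{eq:mom2} (extended to complex $\gamma_j$ by splitting the weighted monomial sum into real and imaginary parts) gives
\begin{equation*}
\|f\|_{L^2(I)} \;\leq\; \frac{C_0\, e^{C(N_f+1)}}{\displaystyle\min_{j_i \leq N_f}|\gamma_j|}\,\|\mathcal T_{\alpha,\beta} f\|_{H^1(I)},
\end{equation*}
with $N_f+1 \simeq \|\nabla f\|_{L^2(I)}/\|f\|_{L^2(I)}$.

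The next step is a lower bound for $\min |\gamma_j|$. On the index box $\{j:j_i\leq N\}$, $|\gamma_j| = |\alpha+i\beta|^{|j|}/(j_1!\cdots j_n!)$ is (once $N$ exceeds an $(\alpha,\beta)$-dependent threshold) decreasing in each $j_i$, so the minimum is attained at the corner $j=(N,\dots,N)$; Stirling's formula then yields
\begin{equation*}
\frac{1}{\displaystyle\min_{j_i\leq N}|\gamma_j|} \;\leq\; e^{C_1(N+1)\log(N+1)},
\end{equation*}
with $C_1$ depending only on $n$ and $|\alpha+i\beta|$. This $(N+1)\log(N+1)$-growth is precisely the origin of the extra logarithmic factor in the modulus of continuity, compared with Theorem \ref{thm:qucHilbert_a}.

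The third step propagates smallness from $I$ to $J$ via Lemma \ref{lem:analytcont}. By Paley-Wiener-Schwartz, $\mathcal T_{\alpha,\beta} f$ extends to an entire function on $\C^n$; differentiating under the integral sign on any bounded connected open set $\Omega\supset I\cup J$ gives
\begin{equation*}
|\p^k(\mathcal T_{\alpha,\beta} f)(x)| \;\leq\; C\,|\alpha+i\beta|^{|k|}\|f\|_{L^2(I)} \;\leq\; M\rho^{-|k|}k!,
\end{equation*}
with $M\simeq\|f\|_{L^2(I)}$ and $\rho\simeq(e|\alpha+i\beta|)^{-1}$, so the hypothesis \eqref{eq:analytcontcond} is verified uniformly. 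Applying the $L^2$-variant \eqref{eq:analytcontL2} of Lemma \ref{lem:analytcont} to $\mathcal T_{\alpha,\beta} f$ and to its first-order partial derivatives (in order to control the full $H^1(I)$-norm) yields $\|\mathcal T_{\alpha,\beta} f\|_{H^1(I)} \leq C\|f\|_{L^2(I)}^{1-\theta}\|\mathcal T_{\alpha,\beta} f\|_{L^2(J)}^\theta$ for some $\theta=\theta(I,J,\alpha,\beta)\in(0,1)$. Combining the three bounds and absorbing $\|f\|_{L^2(I)}^{1-\theta}$ into the left-hand side gives the claim, with an exponent of the form $N_f\log N_f\simeq \tfrac{\|\nabla f\|}{\|f\|}\bigl|\log\tfrac{\|\nabla f\|}{\|f\|}\bigr|$.

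The main obstacle I expect is the joint bookkeeping of the $(N+1)\log(N+1)$-growth in the moment step and the $1/\theta$-loss coming from the analytic-continuation step; both contribute to the final exponent and must be reconciled to produce the precise dependence $\tfrac{\|\nabla f\|}{\|f\|}|\log(\cdot)|$ stated in the proposition. A secondary, minor point is the extension of Lemma \ref{lem:moments} to complex-valued weights $\gamma_j$, which should follow by writing $\sum\gamma_j\tilde f_j x^j = \sum(\re\gamma_j)\tilde f_j x^j + i\sum(\im\gamma_j)\tilde f_j x^j$ and applying the real-valued version of the lemma to each piece separately.
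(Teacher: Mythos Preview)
Your proof is correct, and the route is genuinely different from the paper's.

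The paper keeps the weights positive by writing $\mathcal T_{\alpha,\beta} f(x)=\sum_j \tfrac{1}{j!}f_j\big((\alpha+i\beta)x\big)^j$ and applying Lemma~\ref{lem:moments} with $\gamma_j=1/j!>0$, so that the simpler estimate \eqref{eq:mom1} (with only an $L^2$ norm on the right) can be used. The price is that the resulting series is $\mathcal T_{\alpha,\beta} f$ only after the complex change of variables $x\mapsto (\alpha+i\beta)^{-1}x$, so the sum must be controlled on the \emph{complex} set $I_{\alpha,\beta}=\{(\alpha-i\beta)|\alpha+i\beta|^{-2}x:\,x\in I\}$. The paper then performs a two-stage propagation of smallness: first the real-analytic Lemma~\ref{lem:analytcont} from $J$ to a real cube $(-2r,2r)^n$, then, when $\beta\neq0$, the complex-analytic Corollary~\ref{cor:analytcontJohn} from that cube into the half-ball $B_r^\pm\supset I_{\alpha,\beta}$.

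Your choice $\gamma_j=(\alpha+i\beta)^{|j|}/j!$ absorbs the complex rotation into the weights, so that $\sum_j\gamma_j f_j x^j=\mathcal T_{\alpha,\beta} f(x)$ already on the real set $I$. This lets you skip the complex-analytic step entirely and use only Lemma~\ref{lem:analytcont} on $\R^n$. What you pay for this is (i) the non-signed (indeed complex) weights force you into the $H^1$-version \eqref{eq:mom2}, hence the need to propagate the first derivatives as well; and (ii) a small extension of Lemma~\ref{lem:moments} to complex $\gamma_j$. Your suggested fix for (ii) is fine: since $f$ is real the $f_j$ are real, so Corollary~\ref{cor:moment_low2} applied separately to $\sum(\re\gamma_j)f_jx^j$ and $\sum(\im\gamma_j)f_jx^j$ and summed gives $\sum_{j_i\le N}|\gamma_j|^2|f_j|^2\le e^{C(N+1)}\|\mathcal T_{\alpha,\beta}f\|_{H^1(I)}^2$, from which the bound $\sum|f_j|^2\le(\min|\gamma_j|)^{-2}\sum|\gamma_j f_j|^2$ yields exactly your displayed inequality. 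Note this does \emph{not} require $\re\gamma_j\neq0$ or $\im\gamma_j\neq0$ individually, only $|\gamma_j|\neq0$.

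Both routes produce the same $(N_f!)^n$ loss and hence the $N_f\log N_f$ exponent. The paper's approach uses the moment machinery in its cleanest form but needs the additional complex-analytic continuation lemma; yours stays entirely in real space at the cost of the $H^1$ measurement and the complex-weight extension.
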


Due to their relevance in harmonic analysis and inverse problems, the discussion of stability properties for the (truncated) Fourier and Laplace transforms is not new: In \cite[Lemma 4.1]{CFDSR14} the authors rely on analytic continuation arguments to prove logarithmic stability of the Fourier transform, $\mathcal{T}_{0,-1}f$, in any dimension and apply it to deduce logarithmic stability properties for the Radon transform. In \cite{LS17} the authors derive optimal logarithmic stability for the one-dimensional truncated Laplace ($\alpha =-1, \beta=0$) and Fourier transforms ($\alpha = 0, \beta=-1$) by means of Slepian's miracle \cite{SP61}. We also refer to \cite{Kov01} for related quantitative results on the Logvinenko-Sereda theorem.

In the present work we discuss the Fourier-Laplace transform primarily as an example of how our methods apply to the higher dimensional setting and thus provide a unified treatment of the Fourier-Laplace transform in arbitrary dimension and without necessarily requiring an openness condition for $J$ (which can be seen from the proof below, see also Remark \ref{rmk:Jmeasure}).

\begin{proof}
Without loss of generality, let us assume that $ I\subset (0,1)^n$. 
Let $x\in\R^n$, then we can expand $\mathcal T_{\alpha,\beta} f(x)$ as follows
\begin{align*}
\mathcal T_{\alpha,\beta} f(x)
&=\int_{(0,1)^n} e^{(\alpha+i\beta) x\cdot y }f(y)dy
=\int_{(0,1)^n} f(y)\sum_{k=0}^\infty \frac{\big((\alpha+i\beta) x\cdot y\big)^k}{k!}dy
\\&=\int_{(0,1)^n} f(y)\sum_{j\in\N_0^n} \frac{\big((\alpha+i\beta)x\big)^j y^j}{j!}dy
=\sum_{j\in\N_0^n} \frac{1}{j!}f_j\big((\alpha+i\beta)x\big)^j,
\end{align*}
where $j!=j_1!\dots j_n!$.
Notice that both the definition of $\mathcal T_{\alpha,\beta} f$ and the expansion can be extended to any $z\in\C^n$.

By Lemma ~\ref{lem:moments} with $\gamma_j=\frac{1}{j!}$ we have
\begin{align}\label{eq:Tf1}
\|f\|_{L^2( I)}
\leq \frac{C_0e^{C(N_f+1)}}{\frac{1}{(N_f!)^2}}\Big\|\sum_{j\in\N_0^n} \frac{1}{j!}f_j x^j\Big\|_{L^2( I)}
\end{align}
with  $N_f+1\simeq \frac{\|\nabla f\|_{L^2(I)}}{\| f\|_{L^2(I)}}$.
By the change of variables $x=(\alpha+i\beta)z$ we infer
\begin{align}\label{eq:Tf2}
\Big\|\sum_{j\in\N_0^n} \frac{1}{j!}f_j x^j\Big\|_{L^2( I)}
\leq \sup_{x\in I} \Big|\sum_{j\in\N_0^n} \frac{1}{j!}f_j x^j\Big|
\leq \sup_{z\in I_{\alpha,\beta}} \Big|\sum_{j\in\N_0^n} \frac{1}{j!}f_j \big((\alpha+i\beta)z\big)^j\Big|
\leq \sup_{z\in I_{\alpha,\beta}}|\mathcal T_{\alpha, \beta}f(z)|,
\end{align}
where
\[ I_{\alpha,\beta}=\left\{\frac{\alpha-i\beta}{|\alpha+i\beta|^2}x:x\in I\right\}.\]
Let $\ell=|\alpha+i\beta|^{-1}$ and $r=\sqrt{n}\ell$. Notice that if $\beta=0$, then $I_{\alpha,0}\subset (-\ell,\ell)^n\subset \R^n$, whereas if $\beta \neq 0$, $\beta\in\R_\pm$, then  $I_{\alpha,\beta}\subset B^\mp_{r} \subset (\C_\mp)^n$.

In order to translate the smallness of $\mathcal T_{\alpha,\beta} f$ in $ J$ into $ I_{\alpha,\beta}$, firstly we make use of  the real analyticity of $\mathcal T_{\alpha,\beta} f$ by means of Lemma ~\ref{lem:analytcont} with $\Omega=\text{conv }((-2r,2r)^n\cup J)$ and $\omega=J$. For any  $k\in\N_0^n$ and $x\in\Omega$ we have that 
\begin{align*}
\left|\partial^k_x\mathcal T_{\alpha,\beta} f(x)\right|=\left|(\alpha+i\beta)^{|k|}\int_{(0,1)^n} f(y) y^ke^{(\alpha+i\beta)x\cdot y}dy\right|\leq C_M \rho^{-|k|}\|f\|_{L^2( I)}, 
\end{align*}
with $\rho=|\alpha+i\beta|^{-1}=\ell$ and $C_M$ depending on $\Omega, \alpha$ and $\beta$. Then, \eqref{eq:analytcontcond}  holds with $M=C_M\|f\|_{L^2(I)}$ and therefore there exist $C>0$ and $\theta_1\in(0,1)$ depending on $I, J$ and the parameters $\alpha,\beta$ such that
\begin{align}\label{eq:Tf3}
\sup_{(-2r,2r)^n} |\mathcal T_{\alpha,\beta} f|\leq C\|f\|_{L^2( I)}^{1-\theta_1}\|\mathcal T_{\alpha,\beta}f\|_{L^2( J)}^{\theta_1}.
\end{align}
If $\beta=0$, since $I_{\alpha,0}\subset (-2r,2r)^n$, the proof can be concluded as follows: We already have by \eqref{eq:Tf1}, \eqref{eq:Tf2} and \eqref{eq:Tf3} that 
\begin{align*}
\|f\|_{L^2( I)}
&\leq  C (e^{c(N_f+1)}N_f!)^{\frac{1}{\theta_1}} \|\mathcal T_{\alpha,\beta}f\|_{L^2( J)}
\leq  C (e^{c(N_f+1)\log (N_f+1)})^{\frac{1}{\theta_1}} \|\mathcal T_{\alpha,\beta}f\|_{L^2( J)},
\end{align*}
where in the last step we have used that $N!\leq N^N$ for $N\geq 1$.

If $\beta\neq 0$ we use the complex analyticity of $\mathcal T_{\alpha,\beta}f$. Without loss of generality, we may assume that $\beta<0$. By  Corollary ~\ref{cor:analytcontJohn} there is $\theta_2\in(0,1)$ such that
\begin{align*}
\sup_{ B^+_{r}} |\mathcal T_{\alpha,\beta} f|
&\leq  \big(\sup_{(-2r,2r)^n} |\mathcal T_{\alpha,\beta} f|\big)^{\theta_2}
\big(\sup_{ B^+_{4n\ell}} |\mathcal T_{\alpha,\beta}f|\big)^{1-\theta_2},
\end{align*}
where $r=\sqrt{n}\ell$. 
For any $z\in B^+_{4n\ell}$ we have
\begin{align*}
|\mathcal T_{\alpha,\beta} f(z)|=\left|\int_{(0,1)^n} e^{(\alpha+i\beta)z\cdot y}f(y)dy\right|\leq e^{4n^2\ell|\alpha+i\beta|}\|f\|_{L^2(I)}\leq e^{4n^2}\|f\|_{L^2(I)}.
\end{align*}
Combining this with the bound \eqref{eq:Tf3} leads to
\begin{align}\label{eq:Tf4}
\sup_{B_r^+} |\mathcal T_{\alpha,\beta} f|
& \leq C \|f\|_{L^2( I)}^{1-\theta}\|\mathcal T_{\alpha,\beta}f\|_{L^2( J)}^{ \theta},
\end{align}
with $\theta=\theta_1\theta_2$.
Thus, we have translated the smallness of $T_{\alpha, \beta}f|_{J}$ to a region in the complex plane which contains $I_{\alpha,\beta}$ (see Figure ~\ref{fig:JIab} for $n=1$).
Finally, from \eqref{eq:Tf1}, \eqref{eq:Tf2} and \eqref{eq:Tf4} we conclude
\begin{align*}
\|f\|_{L^2( I)}
&\leq  C (e^{c(N_f+1)}N_f!)^{\frac{1}{\theta}} \|\mathcal T_{\alpha,\beta}f\|_{L^2( J)},
\end{align*}
which leads to the claimed estimate.

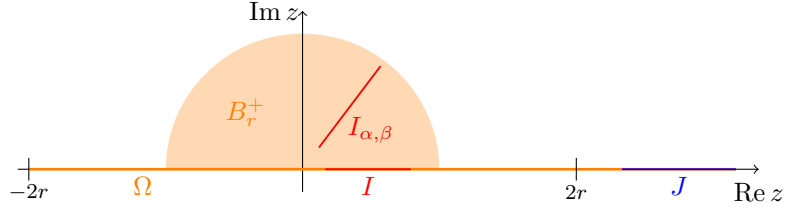
\begin{figure}
\begin{center}
\begin{tikzpicture}[scale=1.5]

\pgfmathsetmacro{\aa}{3/6}
\pgfmathsetmacro{\bb}{-4/6}
\pgfmathsetmacro{\rr}{1/sqrt(\aa*\aa+\bb*\bb)}
\pgfmathsetmacro{\Ja}{2.8}
\pgfmathsetmacro{\Jb}{3.8}
\pgfmathsetmacro{\Ia}{.2}
\pgfmathsetmacro{\Ib}{.95}

\begin{scope}
    \clip (-2*\rr,0) rectangle (2*\rr,\rr);

\draw[orange!0, fill=orange, opacity=.3](0,0) circle [radius=\rr];
\node[orange] at (-.5,.5) {$B_r^+$};
\end{scope}

\draw[->] (-2^\rr-.2,0)--(\Jb+.2,0);
\draw[->] (0,-.2)--(0,\rr+.2);
\node at (-.25,\rr+.2) {$\im z$};
\node at (\Jb+.2,-.2) {$\re z$};

\draw[orange, line width=1pt] (-2*\rr,0)--(\Jb,0);
\node[orange] at (-2*\rr+1,-.15) {$ \Omega$};
\draw (-2*\rr,-.1)--(-2*\rr,.1);
\node at (-2*\rr,-.2) {\footnotesize $ -2r$};
\draw (2*\rr,-.1)--(2*\rr,.1);
\node at (2*\rr,-.2) {\footnotesize $2r$};

\draw[red, line width=.7pt] (\Ia,0)--(\Ib,0);
\node[red] at (\Ia/2+\Ib/2,-.15) {$ I$};

\draw[blue, line width=.7pt] (\Ja,0)--(\Jb,0);
\node[blue] at (\Ja/2+\Jb/2,-.15) {$ J$};

\draw[red, line width=.7pt] (\Ia*\aa*\rr*\rr, -\Ia*\bb*\rr*\rr)--(\Ib*\aa*\rr*\rr, -\Ib*\bb*\rr*\rr);
\node[red] at (.6,.35) {$ I_{\alpha,\beta}$};
\end{tikzpicture}
\end{center}
\caption{Subsets for the propagation of smallness in the proof of Proposition ~\ref{prop:qucexphigher}. By real analyticity, we propagate the information from $J$ to $(-2r, 2r)$. Notice that for $n=1$, $r=\ell=|\alpha+i\beta|^{-1}$. Then, we use complex analyticity to reach $B^+_r$, which contains $I_{\alpha,\beta}$.
}
\label{fig:JIab}
\end{figure}

\end{proof}

\begin{rmk}
We remark that our methods are not limited to the examples given. Another class of operators that could immediately be treated with these ideas in higher dimensions are for instance operators of the form
\begin{align*}
T_\alpha f(x)=\int_{\R^n} \frac{f(y)}{\big(x\cdot(x-y)\big)^\alpha} dy,
\end{align*}
for any $\alpha\in\R\backslash\{0,-1,\dots\}$.
Notice that the kernel can be expanded as
\begin{align*}
\frac{1}{\big(x\cdot(x-y)\big)^\alpha}=\frac{1}{|x|^{2\alpha}}\frac{1}{\big(1-\frac{x\cdot y}{|x|^2}\big)^\alpha}=
\frac{1}{|x|^{2\alpha}}\sum_{k=0}^\infty c_{\alpha,k}\left(\frac{x\cdot y}{|x|^2}\right)^k.
\end{align*}
Therefore, arguing as before we can write $T_\alpha f$ in terms of the moments. Then, with a suitable change of variables and an application of Lemma ~\ref{lem:moments}, independently of the sign of $c_{\alpha,j}$, the quantitative unique continuation result follows (with the data measured in the $H^1(J)$-norm unless $\alpha\in \R_-$).
\end{rmk}

\part{Logarithmic Stability Results Using the Branch-Cut Argument }
\label{part_2}

In the following sections we introduce a second method for proving logarithmic stability estimates which relies on the presence of branch-cut singularities in the symbols of the operators under consideration. 

\section{Auxiliary Results for the Branch-Cut Argument}
\label{sec:aux}

In this section we collect auxiliary results on quantitative unique continuation in one and higher dimensions, on the relation between analyticity and locality and on some boundary-bulk estimates for holomorphic extensions.

\subsection{Quantitative analytic continuation}

\label{sec:ana_ext}

In this section we prepare for the propagation of smallness arguments which we will apply in the following sections. The key result here is Theorem ~\ref{thm:analytic1D} which is a quantitative analytic continuation argument providing the framework for the logarithmic stability estimates obtained in the sequel.

\begin{thm}[Quantitative analytic continuation]
\label{thm:analytic1D}
Let $I, J \subset \R$ be  open, connected, bounded, non-empty intervals with $\overline{I}\cap \overline{J} = \emptyset$. Let $h:\R\to\C$ be  such that $h\in C^0(\R,\C)$,  $\im h|_{J}=0$, $\re h|_J$ is real analytic and $h$ has an analytic extension $\tilde h$ into the upper complex half-plane.
Let $K\subset\R$ be a bounded interval containing $\conv(2I\cup 2J)$ and assume that
 \begin{align}\label{eq:analytic1Dcond}
\|\tilde h\|_{H^{s_1}(K\times[0,2])}+\| h\|_{H^{s_2}(J)}\leq M
\end{align}
for some $s_1>0$, $s_2>1$.
Then for any $s>0$ there exist $\nu>0$ and $C>0$ such that 
\begin{align*}
\|\tilde h\|_{L^2(I\times[0,1])}\leq CM\frac{1}{\left|\log \left(C\frac{ \| h\|_{H^{-s}(J)}}{M}\right) \right|^\nu}.
\end{align*}
\end{thm}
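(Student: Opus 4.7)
The plan is a three-stage propagation of smallness: first, boundary interpolation will upgrade the $H^{-s}(J)$ smallness of $h$ to a stronger pointwise-type smallness; second, a Hadamard/John-type three-lines inequality will propagate this smallness from the real interval $J$ into a small rectangle above $J$ in the upper half-plane; third, Vessella-type quantitative analytic continuation will carry the smallness to the target set $I\times[0,1]$ through the bulk domain $K\times[0,2]$. Concatenating the three steps will yield a H\"older estimate, which then implies the claimed logarithmic one by a standard elementary comparison.

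For the first step, I would use that $s_2>1>\tfrac{1}{2}$ gives the continuous embedding $H^{s_2}(J)\hookrightarrow L^\infty(J)$ and interpolate between $H^{-s}(J)$ and $H^{s_2}(J)$ at an intermediate regularity $\rho>1/2$:
\[
\|h\|_{L^\infty(J)}\leq C\|h\|_{H^\rho(J)}\leq C\|h\|_{H^{-s}(J)}^{\theta_0}\|h\|_{H^{s_2}(J)}^{1-\theta_0}\leq CM^{1-\theta_0}\|h\|_{H^{-s}(J)}^{\theta_0}
\]
for some $\theta_0\in(0,1)$ depending only on $s$ and $s_2$. For the second step, I would pick a subinterval $J^*\Subset J$ with $2J^*\Subset J$ and apply Lemma~\ref{lem:analytcontJohn} (together with an $L^2$-version of it, such as the forthcoming Lemma~\ref{lem:John}) to $\tilde h$ on the rectangle $2J^*\times[0,|J^*|]$. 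The bulk bound $\|\tilde h\|_{H^{s_1}(K\times[0,2])}\leq M$ supplies the large-scale control, after a small shrinking of the rectangle to extract $L^\infty$ or $L^2$ control from $H^{s_1}$ using interior analyticity. This should yield
\[
\|\tilde h\|_{L^2(J^*\times[0,|J^*|/2])}\leq CM^{1-\theta_1}\|h\|_{L^\infty(2J^*)}^{\theta_1}.
\]
For the third step, I would note that $\tilde h$ is holomorphic in the interior of $K\times[0,2]$, hence real analytic in two real variables with Cauchy estimates of the form $|\partial^k\tilde h|\leq CM\rho_0^{-|k|}k!$ on any compactly contained subdomain. Applying Lemma~\ref{lem:analytcont} with $\Omega$ a mild shrinking of $K\times[0,2]$ and $\omega = J^*\times[0,|J^*|/2]$ then gives
\[
\|\tilde h\|_{L^2(I\times[0,1])}\leq CM^{1-\theta_2}\|\tilde h\|_{L^2(J^*\times[0,|J^*|/2])}^{\theta_2}.
\]
Chaining the three estimates produces the H\"older-type bound $\|\tilde h\|_{L^2(I\times[0,1])}\leq CM^{1-\tau}\|h\|_{H^{-s}(J)}^{\tau}$ with $\tau = \theta_0\theta_1\theta_2\in(0,1)$.

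Finally, I would pass from H\"older to logarithmic: if $\|h\|_{H^{-s}(J)}\geq M$ the conclusion holds trivially (since $\|\tilde h\|_{L^2(I\times[0,1])}\leq CM$ by the a priori bound), whereas if $r:=\|h\|_{H^{-s}(J)}/M<1$ then the elementary inequality $r^\tau\leq C_\nu|\log r|^{-\nu}$, valid for every $\nu>0$ with $C_\nu$ depending only on $\tau$ and $\nu$, converts the H\"older bound to the desired logarithmic one. The main technical obstacle will be the clean $L^2$-to-$L^2$ formulation of John's three-lines inequality in Step~2, since Lemma~\ref{lem:analytcontJohn} is stated only for sup norms; this is precisely the content of the forthcoming Lemma~\ref{lem:John}, whose proof amounts to an $L^2$-integration of the pointwise three-lines bound. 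A secondary technicality will be bridging the $H^{s_1}$ bulk hypothesis (with $s_1>0$ possibly smaller than $1/2$) to pointwise or $L^2$ control on a slightly shrunk subdomain, which will follow from shrinking the rectangle to exploit interior analyticity and Cauchy's formula.
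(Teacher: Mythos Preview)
Your Step~3 contains a genuine gap. You propose to apply Lemma~\ref{lem:analytcont} on a domain $\Omega$ that is ``a mild shrinking of $K\times[0,2]$'' in order to conclude control on $I\times[0,1]$. But $\tilde h$ is holomorphic only in the \emph{open} upper half-plane; on the boundary segment $I\times\{0\}$ it is merely continuous, and in the applications (where $h|_I$ involves an arbitrary compactly supported $g$) it is certainly not real analytic there. Hence any domain $\Omega$ on which the Cauchy-type hypothesis \eqref{eq:analytcontcond} holds must be compactly contained in $K\times(0,2)$, and therefore cannot contain $I\times\{0\}$. Your chain thus only delivers control on $I\times[\tau,1]$ for some fixed $\tau>0$, not on the full target $I\times[0,1]$.

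This is not a cosmetic issue: the H\"older bound $\|\tilde h\|_{L^2(I\times[0,1])}\leq CM^{1-\tau}\|h\|_{H^{-s}(J)}^{\tau}$ you claim is in fact \emph{false} in general. The theorem feeds directly into the stability estimates for $(-\Delta)^s$ and the Hilbert transform, for which the logarithmic rate is known to be optimal; a H\"older estimate here would contradict that. The paper's proof handles the boundary obstruction by splitting $I\times[0,1]=\big(I\times[0,\tau]\big)\cup\big(I\times[\tau,1]\big)$: the thin strip is bounded by $C\tau^{1/q}M$ using only the a~priori $H^{s_1}$ bound, while the interior part is reached by a chain of three-balls inequalities whose length grows like $|\log\tau|$, so the H\"older exponent there degenerates as $\alpha^{N}\sim\tau^{c}$. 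Optimising in $\tau$ is what produces the logarithmic modulus, and it is essential rather than a matter of convenience. Your outline would become correct if you replaced the single application of Lemma~\ref{lem:analytcont} by this $\tau$-dependent propagation plus optimisation; your Steps~1 and~2 are fine and correspond to the paper's use of interpolation and Lemma~\ref{lem:John}.
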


\begin{rmk}
Here and in the following, when writing that $h\in C^{0}(\R, \C)$ has an analytic extension  into the upper complex half-plane, we mean that there exists a function $\tilde{h}: \C_+ \rightarrow \C$ which is analytic as a function in $\{x\in\C: \im(x) >0 \}$ and continuous up to the boundary such that $\tilde{h}|_{\{\im(x)=0\}} = h$. 
We will exploit the existence of analytic continuations into the upper half-plane in the following sections in the construction of our comparison operators.
\end{rmk}

\begin{rmk}\label{rmk:lower}
We observe that  Theorem ~\ref{thm:analytic1D} also holds for functions with an analytic extension into the lower half-plane which directly follows by reflecting the subsets in the upper half-plane into the lower half-plane.
\end{rmk}

In order to prove Theorem ~\ref{thm:analytic1D}, we rely on the following result, which is an analogue of Lemma ~\ref{lem:analytcontJohn} for $L^2$-norms:

\begin{lem}\label{lem:John}
Let $I$, $J$, $h$ and $\tilde h$ be as in Theorem ~\ref{thm:analytic1D}. Then there exist $\beta\in(0,1)$ and $C>0$ such that
\begin{align*}
\|\tilde h\|_{L^2(J/4\times[0,1])}\leq C \| h\|_{H^1(J)} ^\beta \|\tilde h\|_{L^2(2J\times[0,2])}^{1-\beta}.
\end{align*}
\end{lem}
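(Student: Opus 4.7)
The plan is to derive the $L^2$-estimate from a pointwise Hadamard-type three-lines inequality. After the elementary reduction $\|\tilde h\|_{L^2(J/4\times[0,1])} \leq |J/4|^{1/2}\sup_{J/4\times[0,1]}|\tilde h|$, I will run a subharmonic/harmonic-measure argument for $\log|\tilde h|$ on an intermediate rectangle $R := (3/4)J \times [0, 3/2]$, chosen so that $J/4 \times [0,1] \Subset R \Subset 2J \times [0, 2]$ and that the bottom $\partial R \cap \R$ lies inside $J$.

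Applying the maximum principle for the subharmonic function $\log|\tilde h|$ on $R$ in the form of harmonic-measure bounds yields, for every $z \in R$,
\begin{equation*}
|\tilde h(z)| \leq A^{\omega(z)}\,\tilde B^{\,1-\omega(z)}, \qquad A := \sup_{(3/4)J}|h|, \quad \tilde B := \sup_{\partial R \setminus \R}|\tilde h|,
\end{equation*}
where $\omega(z)$ denotes the harmonic measure of $\partial R \cap \R$ in $R$ viewed from $z$. Since every $z \in J/4\times[0,1]$ stays at a positive distance from the top and the lateral parts of $\partial R$, one has $\omega(z) \geq \beta$ for some uniform $\beta \in (0,1)$ depending only on the geometry. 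The two boundary sup-norms will then be bounded separately: $A$ by the $1$-dimensional Sobolev embedding $\sup_J|h| \leq C\|h\|_{H^1(J)}$, and $\tilde B$ by a Cauchy/mean-value inequality that controls $\sup|\tilde h|$ by $L^2$-averages on disks contained in the domain of analyticity of $\tilde h$.

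The main obstacle is this last step near the real axis: $\partial R \setminus \R$ touches $\R$ at the two lower corners of $R$, so half-disks sitting inside $\C_+$ centred there have vanishing radius and Cauchy's inequality degenerates. Here the hypothesis $\im h|_J = 0$ with $\re h|_J$ real analytic becomes crucial: by the Schwarz reflection principle the formula $z \mapsto \overline{\tilde h(\bar z)}$ extends $\tilde h$ analytically across $J$ into a complex neighbourhood of it. Every $z_0 \in \partial R \setminus \R$ then admits a full disk $D(z_0, r)$ of radius $r \gtrsim |J|$ on which $\tilde h$ is holomorphic and which lies in $2J \times [-2, 2]$, and the reflection identity $|\tilde h(\bar z)| = |\tilde h(z)|$ lets one bound the integral of $|\tilde h|^2$ on the lower half of $D(z_0, r)$ by the $L^2$-norm on $2J \times [0,2]$, yielding $\tilde B \leq C\|\tilde h\|_{L^2(2J \times [0,2])}$.

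To conclude, write $A_0 := C\|h\|_{H^1(J)}$ and $B_0 := C\|\tilde h\|_{L^2(2J\times[0,2])}$, so that $A \leq A_0$ and $\tilde B \leq B_0$. If $A_0 \leq B_0$, the map $t \mapsto A_0^t B_0^{1-t}$ is non-increasing, and $\omega(z) \geq \beta$ gives $|\tilde h(z)|^2 \leq A_0^{2\beta} B_0^{2(1-\beta)}$ on $J/4 \times [0,1]$, which integrates to the desired estimate. In the opposite case $A_0 > B_0$, the inclusion $J/4 \times [0,1] \subset 2J \times [0,2]$ already yields $\|\tilde h\|_{L^2(J/4 \times [0,1])} \leq B_0 \leq A_0^\beta B_0^{1-\beta}$, and combining the two cases produces the claim with $\beta \in (0,1)$ and $C$ depending only on $I$ and $J$.
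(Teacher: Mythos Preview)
Your argument is correct, but it follows a genuinely different route from the paper's proof.

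The paper treats $\re\tilde h$ and $\im\tilde h$ separately as harmonic functions on $2J\times(0,2)$ and invokes an $L^2$ propagation-of-smallness estimate for harmonic functions (quoted from \cite{JL99} or \cite{RS17}) of the form
\[
\|u\|_{L^2(J/4\times[\epsilon,1])}\le C\bigl(\|u\|_{H^1(J/2\times\{\epsilon\})}+\|\partial_y u\|_{L^2(J/2\times\{\epsilon\})}\bigr)^{\beta}\|u\|_{L^2(2J\times[\epsilon,2])}^{1-\beta}.
\]
The Neumann term $\partial_y u$ on the boundary is then converted into a tangential derivative of the conjugate part via the Cauchy--Riemann equations, and one passes to the limit $\epsilon\to 0$ using the real analyticity of $h|_J$ to justify boundary regularity. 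Everything stays in $L^2$ and no analytic continuation across $\R$ is used.

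Your argument instead works with $\log|\tilde h|$ directly and applies the two-constants theorem on the rectangle $R$. The price is that you must pass through $\sup$-norms; the gain is that the estimate is entirely self-contained and does not rely on the cited harmonic propagation lemma. The key step where your approach differs most is the bound on $\tilde B$: rather than Cauchy--Riemann, you use the hypothesis $\im h|_J=0$ to invoke Schwarz reflection and extend $\tilde h$ analytically across $J$, so that mean-value bounds on full disks are available even for boundary points of $R$ arbitrarily close to $\R$. (A minor remark: for Schwarz reflection you only need $\im h|_J=0$ and continuity of $\tilde h$ up to $\R$; the real analyticity of $h|_J$ is then a consequence rather than an input.) Your final case split $A_0\le B_0$ versus $A_0>B_0$ is a clean way to handle the fact that $\omega(z)\to 1$ as $z$ approaches the bottom edge, which prevents a uniform two-sided bound on $\omega(z)$.

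Both proofs are valid; yours is more elementary, while the paper's stays in the $L^2$ framework throughout and avoids the detour through pointwise bounds and reflection.
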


\begin{proof}
We exploit the fact that the real and imaginary parts of  $\tilde{h}$ are harmonic.
If $u$ is harmonic on $2J\times (0,2)$,
by e.g. \cite[Lemma 14.5]{JL99} or \cite[Proposition 5.13]{RS17}, there exists $\beta\in(0,1)$ such that for every $\epsilon\in(0,1)$
\begin{align}\label{eq:harmonic}
\|u\|_{L^2(J/4 \times[\epsilon,1])}\leq C \left(\|u\|_{H^1(J/2 \times \{\epsilon\} )}+\|\partial_y u\|_{L^2(J/2 \times \{\epsilon\})}\right)^\beta\|u\|_{L^2(2J\times[\epsilon,2])}^{1-\beta}.
\end{align}

We now use the Cauchy-Riemann equations $\partial_y \tilde{h}=i\partial_x \tilde{h}$ to dispose of the normal derivative. More precisely, we have 
\begin{align*}
\p_y (\re \tilde h)|_{J/2 \times \{\epsilon\}}=-\p_x (\im \tilde{h})|_{J/2 \times \{\epsilon\}}, \quad 
\p_y(\im\tilde h)|_{J/2 \times \{\epsilon\}}=\p_x (\re \tilde{h})|_{J/2 \times \{\epsilon\}}.
\end{align*}
Applying  \eqref{eq:harmonic} to $\re \tilde{h}$ and $\im \tilde h$ and using the previous identities, we obtain
\begin{align}
\label{eq:appl_CR}
\begin{split}
\|\re \tilde h\|_{L^2(J/4\times[\epsilon,1])}
&\leq C \left(\|\re \tilde{h}\|_{H^1(J/2 \times \{\epsilon\})}+\|\partial_y (\re \tilde h)\|_{L^2(J/2 \times \{\epsilon\})}\right)^\beta\|\re \tilde h\|_{L^2(2J\times[\epsilon,2])}^{1-\beta}\\
&\leq C \left(\|\re \tilde{h}\|_{H^1(J/2 \times \{\epsilon\})}+\|\partial_x (\im \tilde h)\|_{L^2(J/2 \times \{\epsilon\})}\right)^\beta\|\re \tilde h\|_{L^2(2J\times[\epsilon,2])}^{1-\beta},\\
\|\im \tilde h\|_{L^2(J/4\times[\epsilon,1])}
&\leq C \left(\|\im \tilde{h}\|_{H^1(J/2 \times \{\epsilon\})}+\|\partial_y (\im \tilde{h})\|_{L^2(J/2 \times \{\epsilon\})}\right)^\beta\|\im \tilde h\|_{L^2(2J\times[\epsilon,2])}^{1-\beta}\\
&\leq C\left( \|\im \tilde{h}\|_{H^1(J/2 \times \{\epsilon\})}+ \|\p_x (\re \tilde h)\|_{L^2(J/2 \times \{\epsilon\})}\right)^\beta\|\im \tilde h\|_{L^2(2J\times[\epsilon,2])}^{1-\beta}.
\end{split}
\end{align}
Now, using that $\tilde{h}|_J=h|_J$ is itself (real) analytic (and thus in particular $C^1$), (interior in $J \times [0,2]$) regularity theory (up to the boundary $y=0$) for harmonic functions implies that $\tilde{h}$ is ($C^1$) regular up to the boundary $y=0$ in $J/2 \times [0,1]$. It is therefore possible to pass to the limit $\epsilon \rightarrow 0$ in \eqref{eq:appl_CR}. 
Recalling that $\re \tilde{h}|_{J} = \re {h}|_{J} = h|_J$ then concludes the proof.
\end{proof}

\begin{proof}[Proof of Theorem ~\ref{thm:analytic1D}]
We argue in three steps, dealing with an estimate for $\tilde h$ in the interval $I\times [0,\tau]$, an estimate for $I \times [\tau,1]$ and a concatenation step separately.

\medskip

\textit{Step 1: Estimate on $I\times[0,\tau]$.}
As a first step, we observe that by the H\"older and Sobolev inequalities  we have 
\begin{align}\label{eq:f+striptau}
\|\tilde h\|_{L^2(I\times[0,\tau])}\leq \left(|I|\tau\right)^{\frac{1}{q}}\|\tilde h\|_{L^p(I\times[0,\tau])}\leq C\tau^{\frac{1}{q}}\|\tilde h\|_{H^{s_1}(I\times[0,\tau])}
\leq C\tau^{\frac 1 q} M
\end{align}
with $\frac{1}{p}+\frac{1}{q}=\frac{1}{2}$ and $s_1>1-\frac{2}{p}$.\\

\textit{Step 2: Estimate on $I\times[\tau,1]$.}  We seek to transport the smallness of $\|h\|_{H^{-s}(J)}$  to $\|\tilde h\|_{L^2(I\times [\tau,1])}$. 
By Lemma ~\ref{lem:John}
\begin{align*}
\|\tilde h\|_{L^2(J/4\times[0,1])}\leq C\| h\|_{H^1(J)}^{\beta}\|\tilde h\|_{L^2(2J\times[0,2])}^{1-\beta}.
\end{align*}
Since we are interested in  the appearance of $\|h\|_{H^{-s}(J)}$, we note that by interpolation, for any $s>0$ and $s_2>1$, there exists  $\theta=\theta(s,s_2)\in(0,1)$  such that
\begin{align}\label{eq:interpolation}
 \| h\|_{H^1(J)}\leq C \| h\|_{H^{-s}(J)}^{\theta} \| h\|_{H^{s_2}(J)}^{1-\theta}.
\end{align}
Therefore,
\begin{align}\label{eq:appJohn}
\|\tilde h\|_{L^2(J/4\times[0,1])}
\leq C \| h\|_{H^{-s}(J)}^{\alpha_0}M^{1-\alpha_0}.
\end{align}

Having transferred information from the boundary into the interior of the upper half-plane, we next seek to apply a three balls inequality to transfer information from $J/2 \times [0,1]$ to the strip $I\times[\tau,1]$.
This is achieved by an iterated three-balls inequality: More precisely, for a harmonic function $u$, there are $\alpha\in(0,1)$ and $C>0$ such that 
\begin{align}
\label{eq:three_balls}
\|u\|_{L^2(B_{2r}(x))}\leq C \|u\|_{L^2(B_r(x))}^\alpha \|u\|_{L^2(B_{4r}(x))}^{1-\alpha},
\end{align}
for any $B_{4r}(x)$ completely contained in the upper half-plane (see for instance \cite{ARRV09} and the references therein).
We can iterate this estimate along a chain of balls (see Figure ~\ref{fig:2}) contained in $K\times(0,2)$, where $K\supseteq \conv(2I\cup 2J)$. To cover the strip $I\times [\tau,1]$ without touching $I\times\{0\}$, we need to iterate the three balls estimate \eqref{eq:three_balls} roughly $N\sim N_0-C\log \tau$ times, with $N_0$ and $C$ depending just on $I$ and $J$. Therefore,
\begin{align*}
\|u\|_{L^2(I\times [\tau,1])}\leq C \|u\|_{L^2(J/4\times[0,1])}^{\alpha^N} \|u\|_{L^2(K\times[0,2])}^{1-\alpha_0\alpha^N}.
\end{align*}
Applying this estimate to $\re \tilde h$ and $\im\tilde h$  together with  \eqref{eq:appJohn} and \eqref{eq:analytic1Dcond} we obtain 
\begin{align}\label{eq:f+strip1}
\|\tilde h\|_{L^2(I\times [\tau,1])}\leq C \| h\|_{H^{-s}(J)}^{\alpha_0\alpha^N} M^{1-\alpha^N}.
\end{align}

\begin{figure}
\begin{center}
\begin{tikzpicture}[scale=.8]

\draw[->] (-.5,0)--(13.5,0);
\draw[->] (0,-.5)--(0,2);
\node at (-.5,1.8) {$x_{n}$};
\node at (13.5,-.2) {$x'$};

\pgfmathsetmacro{\Ia}{1}
\pgfmathsetmacro{\Ib}{5}
\draw[red] (\Ia,0)--(\Ib,0);
\node[red] at (\Ib/2+\Ia/2,-.2) {$I$};

\pgfmathsetmacro{\Jc}{10}
\pgfmathsetmacro{\Jr}{3}

\fill[blue!20] (\Jc-\Jr/4, 0) rectangle (\Jc+\Jr/4, 1);
\node[blue!50] at (\Jc,1.8) {$\frac{J}{4}\times[0,1]$};

\draw[blue] (\Jc-\Jr,0)--(\Jc+\Jr,0);
\node[blue] at (\Jc,-.2) {$J$};

\pgfmathsetmacro{\t}{.22}
\fill[red!20] (\Ia,\t) rectangle (\Ib,1);

\node[red!50] at (\Ib/2+\Ia/2,1.8) {$I\times[\tau,1]$};

\pgfmathsetmacro{\ra}{\t/2}
\pgfmathsetmacro{\ya}{\t+\t/4}
\pgfmathsetmacro{\sa}{3/4*\ra}
\pgfmathsetmacro{\Ka}{(\Ib-\Ia-\ra)/3*2/\ra)+1}
\foreach \k in {0,...,\Ka} {\draw[line width=.1pt] (\Ia+\k*2*\sa, \ya) circle [radius=\ra];} 

\def\myrec#1#2#3#4#5{
  \ifnum#4<6\relax
  \pgfmathsetmacro{\g}{#1/100}   
    \pgfmathsetmacro{\r}{1/12*(8*#2-4*#1-32*\g+sqrt(16*#2*#2-16* #2*#1-23*#1*#1-128*#2*\g+64*#1*\g+256*\g*\g))}
    \pgfmathsetmacro{\y}{2*\r+\g}
    \pgfmathtruncatemacro{\c}{#4}
    \pgfmathsetmacro{\si}{#5+#1*3/4}
    \pgfmathtruncatemacro{\Kinf}{int(2/3-2/3\si/\r)-1}
    \pgfmathsetmacro{\Ksup}{(\Ib-\Ia-\si-\r)/3*2/\r)+1}
    \ifodd#4 \else
\foreach \k in {\Kinf,...,\Ksup} {\draw[line width=.1pt] (\Ia+\si+\k*3/2*\r, \y) circle [radius=\r];}  
\fi

\pgfmathtruncatemacro{\contador}{#4+1}
    \edef\rep{\noexpand \myrec{\r}{\y}{0}{\contador}{\si};}
    \rep
  \fi
}

\myrec{\ra}{\ya}{0}{1}{0}

\pgfmathsetmacro{\K}{12}
\foreach \k in {-1,...,\K} 
{\draw[line width=.1pt] (\Ia+\si+3/2*\r*\k, \y+\r/2) circle [radius=\r];}
\foreach \k in {1,...,5} 
{\pgfmathsetmacro{\cx}{\Ia+\si+3/2*\r*(\K+\k/3*cos(-10*\k))}
\pgfmathsetmacro{\cy}{\y+\r/2+3/4*\r*\k/3*(sin(-10*\k)}
\draw[line width=.1pt] (\cx, \cy) circle [radius=\r/2];
}

\end{tikzpicture}
\end{center}
\caption{An illustration of the propagation of smallness argument in the proof of Theorem ~\ref{thm:analytic1D}. We first propagate the data from the interval $J$ to the rectangle $\frac{J}{4} \times [0,1]$ by an application of Lemma ~\ref{lem:John}. Next, we iterate the three balls inequality along a chain of balls which reach $I\times[\tau, 1]$. We repeat this process until we cover  $I\times[\tau, 1]$ with the constraint that the balls of double size are not allowed to intersect $I\times\{0\}$. This force us to take balls whose radii are proportional to $x_{n}$. As a consequence, the number of these balls is then proportional to $|\log \tau|$.
}

\label{fig:2}

\end{figure}

\medskip

\textit{Step 3: Optimization.}
We combine the estimates \eqref{eq:f+striptau} and \eqref{eq:f+strip1} to finally deduce
\begin{align*}
\|\tilde h\|_{L^2(I\times[0,1])}&\leq \|\tilde h\|_{L^2(I\times[0,\tau])}+ \|\tilde h\|_{L^2(I\times[\tau,1])}\\
&\leq C M
\left(\tau^{\frac{1}{q}}+\left(\frac{ \| h\|_{H^{-s}(J)}}{ M}\right)^{\gamma_0\tau^{ \gamma}}\right).
\end{align*}

Optimizing the right hand side in $\tau>0$ (as, for instance, in \cite{ARRV09}), we arrive at the desired estimate.
\end{proof}

Seeking to also deduce results in higher dimensions, we extend Theorem ~\ref{thm:analytic1D} to this setting. Here it will be convenient to select a special direction and to split the variables into this direction and the remaining ones. We hence use the following notation $x=(x', x_n)\in\R^n$.

\begin{thm}\label{thm:analyticnD}
Let $I, J \subset \R$ be  open, bounded, non-empty intervals with $\overline{I}\cap \overline{J} = \emptyset$ and let $Q\subset \R^{n-1}$ be a bounded, open set.
Consider the subsets
$\mathcal I= Q\times I$ and $\mathcal J=Q\times J$ in $\R^n$.
Let $h:\R^n\to \C$ be such that $h \in C^{0}(\R^n,\C)$,  $\im h|_{\mathcal J}=0$, $\re h|_{\mathcal J}$ is real analytic and $h$ has an analytic extension $\tilde  h$ in the $x_n$-variable into the upper complex half-plane.
Let $K\subset\R$ be a bounded interval containing $\conv(2I\cup 2J)$ and define
$\mathcal K=Q\times K$. Suppose that
 \begin{align*}
\|\tilde h\|_{H^{s_1}(\mathcal K\times[0,2])}+\| h\|_{H^{s_2}(\mathcal J)}\leq  \mathcal M
\end{align*}
for some $s_1>0$, $s_2>1$.
Then for any $s>0$ there exist $\nu>0$ and $C>0$ such that 
\begin{align*}
\|\tilde h\|_{L^2(\mathcal I\times[0,1])}\leq C\mathcal M\frac{1}{\left|\log \left( C\frac{\|  h\|_{H^{-s}(\mathcal J)}}{\mathcal M}\right) \right|^\nu}.
\end{align*}
\end{thm}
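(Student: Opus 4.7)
The plan is to mirror the three-step proof of Theorem \ref{thm:analytic1D} in higher dimensions by applying its fiber-wise ingredients (Sobolev embedding, Lemma \ref{lem:John}, the three-balls inequality) for almost every fixed $x' \in Q$ --- for which $(x_n,y)\mapsto \tilde h(x',x_n,y)$ is analytic in $x_n$ on the upper half-plane and hence has real and imaginary parts harmonic in $(x_n,y)$ --- and then integrating the resulting inequalities over $Q$ by means of H\"older's inequality.

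First, I would replicate Step~1 by using H\"older and the Sobolev embedding $H^{s_1}(\mathcal K \times [0,2])\hookrightarrow L^p(\mathcal K\times[0,2])$, valid for any $s_1 > 0$ and $p > 2$ sufficiently close to $2$ (so that $s_1 > (n+1)(1/2-1/p)$), to obtain $\|\tilde h\|_{L^2(\mathcal I\times[0,\tau])}\leq C\tau^{1/2-1/p}\mathcal M$. Next, for the analogue of Step~2, I would apply Lemma~\ref{lem:John} fiber-wise in $x'$, square, integrate over $Q$, apply H\"older with exponents $1/\beta$ and $1/(1-\beta)$, and combine with the fiber-wise interpolation \eqref{eq:interpolation} (again integrated in $x'$ with H\"older) to arrive at
\begin{equation*}
\|\tilde h\|_{L^2(Q\times (J/4)\times[0,1])}^{2} \leq C\,\|h\|_{L^{2}_{x'}H^{-s}_{x_n}(\mathcal J)}^{2\beta\sigma}\,\mathcal M^{2(1-\beta\sigma)}.
\end{equation*}
Finally, the iterated three-balls inequality applied fiber-wise in $(x_n,y)$ along a chain of $N \sim N_0 + C|\log\tau|$ balls, together with H\"older in $x'$ with conjugate exponents $1/\alpha^{N}$ and $1/(1-\alpha^{N})$, would give an analogous bound for $\|\tilde h\|_{L^{2}(\mathcal I\times[\tau,1])}$; summed with Step~1 and optimized in $\tau$ exactly as in Step~3 of Theorem~\ref{thm:analytic1D}, this produces a logarithmic modulus --- but measured with respect to the mixed norm $\|h\|_{L^{2}_{x'}H^{-s}_{x_n}(\mathcal J)}$ rather than $\|h\|_{H^{-s}(\mathcal J)}$.

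The main obstacle is thus to convert the mixed norm $\|h\|_{L^{2}_{x'}H^{-s}_{x_n}(\mathcal J)}$ into the isotropic norm $\|h\|_{H^{-s}(\mathcal J)}$ of the statement. By duality $\|h\|_{L^{2}_{x'}H^{-s}_{x_n}(\mathcal J)} \geq \|h\|_{H^{-s}(\mathcal J)}$ in general, so a direct bound is impossible; instead I would establish an anisotropic interpolation inequality of the form
\begin{equation*}
\|h\|_{L^{2}_{x'}H^{-s}_{x_n}(\mathcal J)} \leq C \|h\|_{H^{-s}(\mathcal J)}^{\theta}\,\|h\|_{H^{s_2}(\mathcal J)}^{1-\theta}
\end{equation*}
for some $\theta\in(0,1)$, with $\theta = s_2/(s+s_2)$ as a natural choice. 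On $\R^n$ this follows from Plancherel's theorem and the pointwise weight comparison $(1+\xi_n^2)^{-s} \leq (1+|\xi|^2)^{-s\theta+s_2(1-\theta)}$, which holds as soon as $-s\theta+s_2(1-\theta)\geq 0$; the transfer to the bounded set $\mathcal J$ is performed via a Sobolev extension operator controlling both $H^{-s}$ and $H^{s_2}$ norms simultaneously, which is available since $\mathcal J$ is a product of intervals. Inserting this interpolation into the estimate above upgrades the exponent of $\|h\|_{H^{-s}(\mathcal J)}/\mathcal M$ to $\beta\sigma\theta\,\alpha^{N}$ and absorbs the extra $\mathcal M$ factor into the constant, after which the optimization in $\tau$ carried out as in the one-dimensional case yields the claimed logarithmic bound.
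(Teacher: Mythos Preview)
Your proposal is correct and follows the same three-step architecture as the paper (thin-strip estimate, boundary-to-bulk transfer plus iterated three-balls, optimization in $\tau$), but in Step~2 you take an unnecessary detour. After applying Lemma~\ref{lem:John} fiber-wise and integrating with H\"older, one arrives at a bound in terms of $\|h\|_{L^2_{x'}H^1_{x_n}(\mathcal J)}$. The paper simply uses the trivial inclusion $\|h\|_{L^2_{x'}H^1_{x_n}(\mathcal J)} \leq \|h\|_{H^1(\mathcal J)}$ (the mixed norm involves only $x_n$-derivatives) and then interpolates \emph{isotropically}: $\|h\|_{H^1(\mathcal J)} \leq C\|h\|_{H^{-s}(\mathcal J)}^{\theta}\|h\|_{H^{s_2}(\mathcal J)}^{1-\theta}$. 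This produces the isotropic $H^{-s}(\mathcal J)$ norm directly, and the problematic mixed negative norm never appears.

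By contrast, you interpolate fiber-wise \emph{first}, which forces you into the mixed norm $\|h\|_{L^2_{x'}H^{-s}_{x_n}(\mathcal J)}$, and then you need your additional anisotropic interpolation to get back to $\|h\|_{H^{-s}(\mathcal J)}$. That extra step is valid (your pointwise weight comparison $(1+\xi_n^2)^{-s}\leq (1+|\xi|^2)^{-s\theta+s_2(1-\theta)}$ for $-s\theta+s_2(1-\theta)\geq 0$ is correct, since then the right side is $\geq 1$ while the left side is $\leq 1$), but it is avoidable. The moral: postpone the Sobolev interpolation until \emph{after} you have passed to an isotropic positive-order norm, where the inclusion of mixed into isotropic norms goes the right way.
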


\begin{proof}
The proof follows along the lines of the proof of Theorem ~\ref{thm:analytic1D}, yielding analogous results after freezing the $x'$ variables in $Q$ and then integrating in these variables.\\

\textit{Step 1: Estimate on $\mathcal I\times [0,\tau]$.} Fixing the $x'$ variables, the estimate \eqref{eq:f+striptau} implies 
\begin{align*}
\|\tilde h(x',\cdot)\|_{L^2(I\times[0,\tau])}\leq C\tau^{\frac{1}{q}}\|\tilde h(x',\cdot)\|_{H^{s_1}(I\times[0,\tau])}.
\end{align*}
Integrating this with respect to the $x'$ variables in the domain $Q$ then results in
\begin{align*}
\|\tilde h\|_{L^2(\mathcal I\times[0,\tau])} \leq C\tau^{\frac{1}{q}}\|\tilde h\|_{L^2(Q,H^{s_1}(I\times[0,\tau]))}\leq C\tau^{\frac{1}{q}}\|\tilde h\|_{H^{s_1}(\mathcal I\times[0,\tau])}\leq C\tau^{\frac{1}{q}}\mathcal M.
\end{align*}
\medskip
\textit{Step 2: Estimate on $\mathcal I\times[\tau,1]$.} 
By Lemma ~\ref{lem:John}, after integrating and using H\"older's inequality and recalling that $\re \tilde{h}|_{J} = h|_{J}$, we obtain
\begin{align*}
\|\tilde h\|_{L^2(\mathcal J/4\times[0,1])}\leq C\| h\|_{H^1(\mathcal J)}^{\beta}\|\tilde h\|_{L^2(2\mathcal J\times[0,2])}^{1-\beta},
\end{align*}
where $k\mathcal J= Q\times kJ$.
Similarly, after applying $N\sim N_0-C\log \tau$ times the three balls inequality to $\re \tilde{h}$ and $\im \tilde{h}$, we infer
\begin{align*}
\|\tilde h\|_{L^2(\mathcal I\times[\tau,1])}\leq C \| h\|_{H^1(\mathcal J)}^{\beta\alpha^N} \|\tilde h\|_{L^2(\mathcal K\times[0,2])}^{1-\beta \alpha^N}.
\end{align*}
In order to introduce the desired spaces, we interpolate as in \eqref{eq:interpolation}, which results in
 \begin{align*}
\|\tilde h\|_{L^2(\mathcal I\times[\tau,1])}\leq C \left(\| h\|_{H^{-s}(\mathcal J)}^\theta\| h\|_{H^{s_2}(\mathcal J)}^{1-\theta}\right)^{\beta\alpha^N} \|\tilde h\|_{L^2(\mathcal K\times[0,2])}^{1-\beta \alpha^N}.
\end{align*}
Thus, estimating the last two terms by the constant $\mathcal M$ from the assumption, we infer
\begin{align*}
\|\tilde h\|_{L^2(\mathcal I\times[\tau,1])}\leq C \| h\|_{H^{-s}(\mathcal J)}^{\alpha_0\alpha^N} \mathcal M^{1-\alpha_0 \alpha^N}.
\end{align*}

\textit{Step 3: Optimization.} Combining the estimates from the previous steps and optimizing in $\tau>0$ as above, we obtain the desired result.
\end{proof}

\subsection{Analyticity and locality}
We recall the following Paley-Wiener type lemma from \cite{E81} (see also \cite{VE65}) which we will frequently rely on in the following sections:

\begin{thm}[Lemma 4.4. in \cite{E81}]
\label{thm:analytic_supp}
Let $p(\xi',\xi_n + i \tau)$ be continuous with respect to all variables $(\xi',\xi_n, \tau)$ for $\xi' \neq 0$ and $\tau \geq 0$ and analytic with respect to $\zeta_n = \xi_n + i \tau$ for $\tau>0$. Assume that the following growth condition holds
\begin{align*}
|p(\xi', \xi_n + i \tau )| \leq C (1+ |\xi'|+|\xi_n| + |\tau|)^{\alpha} 
\end{align*}
for some $\alpha \in \R$.
Then, $p(D', D_n): H^{s}(\R^n) \rightarrow H^{s-\alpha}(\R^n)$ is a bounded (pseudodifferential) operator which preserves the support on the negative (real) half-space, i.e. if $g\in C^{\infty}_c(\R^n_-)$, then also $\supp(pg)\subset \R^n_-$.
\end{thm}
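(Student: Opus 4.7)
The plan is to dispatch the boundedness claim via standard Fourier multiplier theory and to establish support preservation via a Paley-Wiener-style contour shift. For the first part, specializing the growth hypothesis to $\tau=0$ yields $|p(\xi',\xi_n)|\leq C(1+|\xi|)^{\alpha}$, so
\[
\|p(D)f\|_{H^{s-\alpha}}^2 = \int_{\R^n}(1+|\xi|^2)^{s-\alpha}|p(\xi)|^2|\hat f(\xi)|^2 d\xi \leq C^2 \|f\|_{H^s}^2,
\]
which gives the desired continuity.

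The substantive claim is the support preservation. Given $g\in C_c^\infty(\R^n_-)$, I first observe that $\hat g(\xi',\zeta_n)$ admits an entire extension in $\zeta_n=\xi_n+i\tau$ which is rapidly decreasing in the closed upper half-plane. Indeed, writing
\[
\hat g(\xi',\zeta_n) = \int e^{-ix'\cdot\xi'} e^{-ix_n\xi_n} e^{x_n\tau} g(x)\,dx,
\]
the factor $e^{x_n\tau}$ is bounded by $1$ on $\supp g$ as long as $\tau\geq 0$, so analyticity in $\zeta_n$ for $\tau>0$ follows by differentiating under the integral. Integration by parts in $x$ then yields $|(\xi')^{\alpha'}\zeta_n^{\alpha_n}\hat g(\xi',\zeta_n)|\leq\|\partial^{\alpha}g\|_{L^1}$ uniformly in $\tau\geq 0$, hence
\[
|\hat g(\xi',\xi_n+i\tau)|\leq C_N(1+|\xi'|+|\xi_n|+\tau)^{-N}, \quad \tau\geq 0,
\]
for every $N\in\N$.

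Combining this with the polynomial upper bound on $p$, the product $P(\xi',\zeta_n):=p(\xi',\zeta_n)\hat g(\xi',\zeta_n)$ is holomorphic in $\zeta_n\in\C_+$ (for each fixed $\xi'\neq 0$), continuous on $\overline{\C_+}$, and satisfies $|P(\xi',\xi_n+i\tau)|\leq C_N(1+|\xi'|+|\xi_n|+\tau)^{-N}$ for all $N$. I now fix $x_n>0$ and $\xi'\neq 0$ and apply Cauchy's theorem to $\zeta_n\mapsto e^{ix_n\zeta_n}P(\xi',\zeta_n)$ on the rectangle with vertices $\pm R,\pm R+iT$. The two vertical sides vanish as $R\to\infty$ thanks to the super-polynomial decay of $P$ in $\xi_n$, so
\[
\int_{\R} e^{ix_n\xi_n}P(\xi',\xi_n)\,d\xi_n = e^{-x_n T}\int_{\R} e^{ix_n\xi_n}P(\xi',\xi_n+iT)\,d\xi_n
\]
for every $T\geq 0$. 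The integral on the right is uniformly bounded in $T$, so the $e^{-x_nT}$ prefactor (using $x_n>0$) forces the left-hand side to vanish. Integrating in $\xi'$ and invoking Fubini (justified by the absolute integrability of $P$), I obtain $p(D)g(x',x_n)=0$ for every $x_n>0$, i.e. $\supp(p(D)g)\subset\R^n_-$.

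The principal technical point is ensuring that the decay of $\hat g$ in the closed upper half-plane is strong enough and uniform enough in $\tau$ to justify both the Cauchy rectangle argument (vanishing of the vertical sides as $R\to\infty$) and the subsequent passage $T\to\infty$. This is the key Paley-Wiener input and is delivered directly by the integration-by-parts estimate above, exploiting the compact support of $g$ inside $\{x_n\leq 0\}$.
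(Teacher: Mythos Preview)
Your proof is correct and follows essentially the same strategy as the paper's: both arguments combine the Paley--Wiener decay estimate on $\hat g$ in the closed upper half-plane with a Cauchy contour shift in $\xi_n$, and then send the height of the contour to infinity using the factor $e^{-x_n\tau}$ for $x_n>0$. The only cosmetic differences are that you use a rectangular contour (with vertical sides at $\pm R$ pushed to infinity) whereas the paper routes the contour through the origin, and that you spell out the boundedness claim explicitly via the multiplier estimate, which the paper leaves implicit.
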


\begin{proof}
The result follows from a deformation argument in the complex plane.
We first notice that by the Paley-Wiener-Schwartz theorem (see for instance Theorem 7.3.1 in \cite{HoermanderI}) for $g\in C_c^{\infty}({I})$ with ${I}\subset \R^n_-$ the function $\F g$ has an analytic extension into the complex plane $\C^n$ and for any $N\in \N$ there exists $C_N>1$ such that for $\tau \geq 0$, $\xi \in \R^n$
\begin{align}
\label{eq:PWS}
|\F g(\xi', \xi_n + i \tau)| \leq C_N(1+|\xi'|+|\xi_n|+|\tau|)^{-N}.
\end{align}
  
We next claim that
\begin{align}\label{eq:tau0}
\begin{split}
v(x', x_n ) 
&:= \int\limits_{\R^n} p(\xi', \xi_n) \F g(\xi', \xi_n) e^{i x \cdot \xi} d\xi\\
&= \int\limits_{\R}\int\limits_{\R^{n-1}} p(\xi', \xi_n + i \tau) \F g(\xi', \xi_n + i \tau) e^{i x\cdot \xi  - x_n \tau } d\xi' d\xi_n.
\end{split}
\end{align}
Indeed, using the Cauchy integral formula, we consider the paths $\R_- \cup (0,i\tau) \cup \big(\R_- \times \{i\tau\}\big)$ and the paths $\R_+ \cup (0,i\tau) \cup \big(\R_+ \times \{i\tau\}\big)$ for $\tau \geq 1$ (see Figure ~\ref{fig:analytic_supp}). Relying on the decay of $\F g$ stated in \eqref{eq:PWS}, we deduce the claimed identity where we have already omitted the semi-circles closing the paths in the Cauchy integrals. 
Passing to the limit $\tau \rightarrow \infty$ for $x_n> 0$ in \eqref{eq:tau0} then implies that $v(x)=0$. Thus, $\supp(v)\subset \R^n_-$.

\begin{figure}
\begin{center}
\begin{tikzpicture}[scale=1]

\pgfmathsetmacro{\tt}{1}
\pgfmathsetmacro{\ee}{.1}

\draw[->] (-5.2,0)--(5.2,0);
\draw[->] (0,-.2)--(0,1.7);
\node at (-.4,1.6) {$\im \xi_n$};
\node at (5.2,-.2) {$\re \xi_n$};
\draw[-] (-.05,\tt)--(.05,\tt);
\node at (-.15,\tt+.15) {$\tau$};

\draw[->, blue, line width=.7pt] (-5,0)--(-2.5-\ee/2,0);
\draw[-, blue, line width=.7pt] (-5,0)--(-\ee,0);
\draw[->, blue, line width=.7pt] (-\ee,0)--(-\ee,\tt/2);
\draw[-, blue, line width=.7pt] (-\ee,0)--(-\ee,\tt);
\draw[-, blue, line width=.7pt] (-5,\tt)--(-\ee,\tt);
\draw[<-, blue, line width=.7pt] (-2.5-\ee/2,\tt)--(-\ee,\tt);

\draw[-, blue, line width=.7pt] (5,0)--(\ee,0);
\draw[<-, blue, line width=.7pt] (2.5+\ee/2,0)--(\ee,0);
\draw[-, blue, line width=.7pt] (\ee,0)--(\ee,\tt);
\draw[<-, blue, line width=.7pt] (\ee,\tt/2)--(\ee,\tt);
\draw[->, blue, line width=.7pt] (5,\tt)--(2.5+\ee/2,\tt);
\draw[-, blue, line width=.7pt] (5,\tt)--(\ee,\tt);

\end{tikzpicture}
\end{center}
\caption{Paths for the Cauchy integrals in the proof of Theorem ~\ref{thm:analytic_supp}.
}
\label{fig:analytic_supp}
\end{figure}
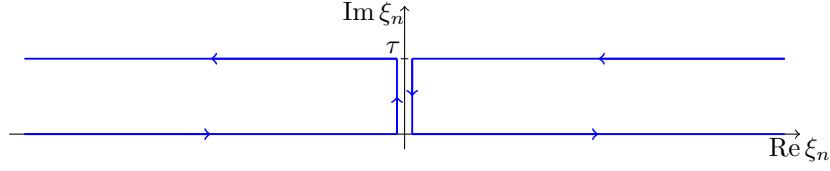

\end{proof}

\subsection{Further auxiliary results}

While Theorem ~\ref{thm:analytic1D} is the main tool in the sequel, we further collect a number of auxiliary results which we will frequently use in the next sections.

\begin{lem}
\label{lem:CR}
Let $h:\R\to\C$, $h\in C^0(\R,\C)\cap L^2(\R, \C)$,  have an analytic extension $\tilde h$ into the upper complex half-plane such that 
\begin{align}
\label{eq:apriori_bulk}
 \|\tilde h\|_{L^\infty(\C_+)} + \|\tilde{h} \|_{L^2(\C_+)} < \infty.
\end{align}
Let $s\in [\frac{1}{2},1)$. Then, for any interval $I\subset\R$ and constants $c>0$, $k> 1$, there exists a constant $C>0$ such that
\begin{align}\label{eq:bulkboundary}
\|\tilde{h}\|_{H^{\frac12}(I\times[0,c])}&\leq C\||D|^{-\frac{1}{2}}h\|_{H^{\frac{1}{2}}(\R)},
\\\label{eq:boundarybulk}
 \|h\|_{H^{-s}(I)}&< C \|\tilde{h}\|_{L^2(kI\times[0,c])}.
\end{align}
Here $|D|$ denotes the (tangential) Fourier multiplier corresponding to $|\xi|$.
\end{lem}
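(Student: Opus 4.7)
For the first inequality the plan is to exploit the Hardy space structure of $\tilde h$. Since $\tilde h\in L^\infty(\C_+)\cap L^2(\C_+)$ with boundary trace $h$, the Paley--Wiener theorem for $H^2(\C_+)$ ensures that $\hat h$ is supported in $[0,\infty)$ and
\begin{equation*}
\tilde h(x,y)=\frac{1}{2\pi}\int_0^\infty e^{ix\xi-y\xi}\,\hat h(\xi)\,d\xi.
\end{equation*}
As restriction to $I\times[0,c]$ only decreases the norm, it suffices to bound $\|\tilde h\|_{H^{1/2}(\R\times[0,c])}$, which I will control via the anisotropic characterization $\|u\|_{H^{1/2}(\R\times[0,c])}^2\lesssim \|u\|_{L^2([0,c];H^{1/2}(\R))}^2+\|u\|_{L^2(\R;H^{1/2}([0,c]))}^2$. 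A direct Plancherel computation in the tangential variable gives
\begin{equation*}
\|\tilde h\|_{L^2([0,c];H^{1/2}(\R))}^2=\int_0^\infty (1+\xi^2)^{1/2}\,\tfrac{1-e^{-2c\xi}}{2\xi}\,|\hat h(\xi)|^2\,d\xi \leq C\int_0^\infty \tfrac{(1+\xi^2)^{1/2}}{\xi}|\hat h(\xi)|^2\,d\xi = C\||D|^{-1/2}h\|_{H^{1/2}(\R)}^2,
\end{equation*}
using that $(1-e^{-2c\xi})/2\leq 1/2$. The normal part is treated analogously via the interpolation $[e^{-\cdot\xi}]_{H^{1/2}([0,c])}^2\lesssim \|e^{-\cdot\xi}\|_{L^2([0,c])}\|\partial_y e^{-\cdot\xi}\|_{L^2([0,c])}$ combined with Fubini, leading to the same majorant.

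For the second inequality I argue by duality. Pair $h$ against an arbitrary $\phi\in C_c^\infty(I)$ with $\|\phi\|_{H^s(I)}\leq 1$, and use the inverse trace theorem to produce an extension $\Phi\in H^{s+3/2}(kI\times[0,c])$ satisfying $\Phi=0$ on the top and lateral sides of the rectangle, $\Phi|_{y=0}=0$, $\partial_y\Phi|_{y=0}=\phi$, and $\|\Phi\|_{H^{s+3/2}}\leq C\|\phi\|_{H^s(I)}$. Since $\tilde h$ is holomorphic, $\Delta\tilde h=0$, so Green's identity reduces to
\begin{equation*}
\int_I h\,\phi\,dx = -\int_{kI\times[0,c]} \tilde h\,\Delta\Phi\,dx\,dy.
\end{equation*}
Applying Cauchy--Schwarz together with the embedding $H^{s-1/2}(kI\times[0,c])\hookrightarrow L^2$ (valid for $s\geq 1/2$ on the bounded domain) delivers $|\int_I h\,\phi|\leq C\|\tilde h\|_{L^2(kI\times[0,c])}\|\phi\|_{H^s(I)}$; taking the supremum over $\phi$ yields the claim.

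The main obstacle is the critical endpoint $s=1/2$: the extension $\Phi$ must land precisely in $H^2(kI\times[0,c])$ to guarantee $\Delta\Phi\in L^2$, and the localization of $\Phi$ within the rectangle---ensuring that only the bottom boundary contributes in Green's identity---has to be compatible with the inclusion $\supp\phi\subset I\Subset kI$. This is arranged by combining the inverse trace theorem with smooth cutoffs supported away from the non-bottom boundary, and by a careful accounting of the norm bounds in the limiting Sobolev regularity.
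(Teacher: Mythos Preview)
For the first inequality your approach coincides with the paper's: both use the Poisson/Hardy representation $\tilde h(x,y)=\mathcal{F}^{-1}(e^{-y\xi}\hat h)$ and control the bulk $H^{1/2}$ norm via Fourier analysis on the boundary; you compute the anisotropic norm directly by Plancherel, while the paper reaches the same conclusion by interpolating between the $L^2$ and $H^1$ boundary--bulk bounds for harmonic extensions applied to $\re\tilde h$ and $\im\tilde h$.

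For the second inequality your route is genuinely different. The paper localises with a cut-off $\eta$, applies a trace inequality to $(1+(-\Delta_x))^{-1/2}(\eta\tilde h)$, and then invokes the Cauchy--Riemann equations to trade the normal derivative $\partial_y(\eta\tilde h)$ for a tangential $\partial_x(\eta\tilde h)$, which is absorbed by the multiplier $(1+(-\Delta_x))^{-1/2}$; everything stays at the level of explicit Fourier multipliers, and the restriction $s\geq\tfrac12$ enters simply through $H^{-s}\hookrightarrow H^{-1/2}$. Your duality/Green's identity argument is a more PDE-flavoured alternative that exchanges the Cauchy--Riemann identity for the harmonicity of $\tilde h$ and packages the regularity gain into an inverse trace theorem; it would adapt more directly to extensions governed by other second-order elliptic operators. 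Your worry about the endpoint $s=\tfrac12$ is not a real obstacle: the trace map $H^2(\R^2_+)\to H^{3/2}(\R)\times H^{1/2}(\R)$ has a bounded right inverse, and multiplication by smooth cut-offs supported in $kI\times[0,c)$ is bounded on $H^2$, so $\Phi$ exists with the required norm control. The step that does need care is justifying Green's identity itself, since the hypotheses give only $\tilde h\in L^2\cap L^\infty(\C_+)$ and not $\tilde h\in H^1$ up to $y=0$; this is handled by working on $\{y\geq\epsilon\}$ and sending $\epsilon\to0$ using the continuity of $\tilde h$ up to the boundary. The paper faces and resolves the analogous regularity issue by first assuming $\tilde h\in C^1(\R\times[0,c])$ and then passing to the general case by approximation.
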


\begin{proof}
In order to prove \eqref{eq:bulkboundary}, we rely on the fact that the real and imaginary parts of $\tilde h$ are harmonic in the upper half-plane and on the following estimate for harmonic functions $u$ in $\R^2_+$ which satisfy the estimates in \eqref{eq:apriori_bulk}.
Indeed, for instance from the Fourier representation of the harmonic extension in the upper half-plane (which is a consequence of the $L^2$-assumption on $h$ in combination with the estimate \eqref{eq:apriori_bulk}), it follows that for $u$ harmonic with $|D|^{-\frac{1}{2}} u \in H^1(\R)$
\begin{align*}
\|u\|_{H^1(\R^2_+)} &\leq C \||D|^{-\frac{1}{2}} u\|_{H^1(\R)},\\
\|u\|_{L^2(\R^2_+)} &\leq C \||D|^{-\frac{1}{2}} u\|_{L^2(\R)}.
\end{align*}  
As a consequence, complex interpolation implies that
\begin{align*}
\|u\|_{H^{\frac{1}{2}}(\R^2_+)} \leq C \||D|^{-\frac{1}{2}} u \|_{H^{\frac{1}{2}}(\R)}.
\end{align*}
Thus, applying this to the real and imaginary parts, we infer the claim of \eqref{eq:bulkboundary}.

In order to infer \eqref{eq:boundarybulk}, we use the following observations (which are results of standard trace estimates): Let $(x,y)\in \R^2_+$ and let $\eta(x, y):= \eta_1(x)\eta_2(y)$ be a cut-off function with $\eta = 1$ on $I$ and $\supp(\eta) \subset k I \times [0,c]$. Then,
for $s\in[\frac{1}{2},1)$ and $f\in C^1(\R \times [0, c])$
\begin{align*}
\|f\|_{H^{-s}(I)}
&\leq  \|\eta f \|_{H^{-s}(\R)}\leq  \|\eta f \|_{H^{-\frac{1}{2}}(\R)}
\leq \|(1+(-\D_x))^{-\frac{1}{2}}(\eta f) \|_{H^{\frac{1}{2}}(\R)}\\
&\leq C(\|\nabla (1 + (-\D_x))^{-\frac{1}{2}} (\eta f) \|_{L^2(\R^2_+)} + \|(1+(-\D_x))^{-\frac{1}{2}}(\eta f)\|_{L^2(\R^{2}_+)})\\
&\leq C(\|\partial_x (1 + (-\D_x))^{-\frac{1}{2}} (\eta f) \|_{L^2(\R^2_+)} + \|\p_y (1 + (-\D_x))^{-\frac{1}{2}} (\eta f) \|_{L^2(\R^2_+)} + \|\eta f \|_{L^2(\R^{2}_+)})\\
&\leq C(\|\eta f\|_{L^2(\R^2_+)} + \|\p_y (1 + (-\D_x))^{-\frac{1}{2}} (\eta f) \|_{L^2(\R^2_+)} + \|\eta f \|_{L^2(\R^{2}_+)}).
\end{align*}
We apply this estimate both to $\re \tilde h$ and to $\im \tilde h$ assuming first that $\tilde{h}\in C^1(\R \times [0,c])$.
By virtue of the Cauchy-Riemann equations $\partial_x \tilde h=i\partial_y \tilde h$ (which strictly speaking only hold for $y>0$ but for which we argue first for $y= \epsilon$ and then pass to the limit $\epsilon \rightarrow 0$ in the end), we further estimate the term with $\partial_y$ as follows 
\begin{align*}
\|\p_y (1+(-\D_x))^{-\frac{1}{2}}&(\eta \re \tilde h)\|_{L^2(\R^2_+)}
\leq \| (1+(-\D_x))^{-\frac{1}{2}} \p_y (\eta \re \tilde h) \|_{L^2(\R^2_+)}\\
&\leq  \| (1+(-\D_x))^{-\frac{1}{2}} (\eta \p_y (\re \tilde h))  \|_{L^2(\R^2_+)}
+  \| (1+(-\D_x))^{-\frac{1}{2}} ( (\p_y \eta)\re \tilde h) \|_{L^2(\R^2_+)}\\
&\leq  \| (1+(-\D_x))^{-\frac{1}{2}} (\eta \p_x(\im \tilde h) )  \|_{L^2(\R^2_+)} +
\|  (\p_y \eta)\re \tilde h \|_{L^2(\R^2_+)}\\
&\leq  \| (1+(-\D_x))^{-\frac{1}{2}} \p_x(\eta  \im \tilde h)  \|_{L^2(\R^2_+)} +
\|(\p_x \eta) \im \tilde h \|_{L^2(\R^2_+)}+
\|(\p_y \eta) \re \tilde h \|_{L^2(\R^2_+)}
\\&\leq C \|\eta\|_{C^1(\R^2_+)}
(\| \re \tilde  h \|_{L^2(kI\times[0,c])}+\| \im \tilde h \|_{L^2(kI\times[0,c])}),
\end{align*} 
and similarly for $\im \tilde h$. Combining both estimates concludes the proof for $\tilde{h} \in C^1(\R \times [0,c])$. An approximation argument then implies the estimate under the claimed regularity assumption.
\end{proof}

We emphasize that the argument for \eqref{eq:boundarybulk} exploited that $s\in [\frac{1}{2},1)$. Nevertheless, this type of argument is not restricted to this regime: Accepting a higher loss in (relative) regularity, similar estimates can also be deduced for other values of $s\in \R$.

Relying on the same arguments as in the proof of Lemma ~\ref{lem:CR}, we obtain an analogue of \eqref{eq:boundarybulk} in higher dimensions.

\begin{cor}
\label{cor:nD_bdry_bulk}
Let $h:\R^n\to\C$, $h \in C^0(\R^n, \C)$, be such that in the $x_n$-variable  it has an analytic extension $\tilde h$ into the upper half-plane and let $s\in [\frac{1}{2},1)$. Then, for any $\mathcal I\subset \R^n$, $c>0$, $k > 1$ there exists a constant $C>0$ such that
\begin{align}
\label{eq:nd_boundry_bulk}
\|h\|_{H^{-s}(\mathcal{I})} \leq C \|\tilde{h}\|_{L^2(k \mathcal{I} \times [0,c])}.
\end{align}
\end{cor}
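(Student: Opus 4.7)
The strategy is to mimic the proof of \eqref{eq:boundarybulk} in Lemma~\ref{lem:CR}, treating $x_n$ as the single real direction in which analyticity holds and $y = \im \zeta_n$ as the ``normal'' extension variable, while all $n$ real variables $x=(x', x_n)$ play the role of tangential variables for the Fourier multiplier $(1+(-\Delta_x))^{-1/2}$, where $\Delta_x$ now denotes the full Laplacian on $\R^n$.

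First I would fix a product cut-off $\eta(x,y) = \eta_1(x)\eta_2(y)$ with $\eta_1 \in C^\infty_c(\R^n)$ equal to one on $\mathcal I$ and supported in $k\mathcal I$, and $\eta_2 \in C^\infty_c(\R)$ equal to one at $y=0$ and supported in $[0,c]$. Then the standard chain of reductions
\begin{align*}
\|h\|_{H^{-s}(\mathcal I)} \leq \|\eta_1 h\|_{H^{-s}(\R^n)} \leq \|\eta_1 h\|_{H^{-1/2}(\R^n)} = \|(1+(-\Delta_x))^{-1/2}(\eta_1 h)\|_{H^{1/2}(\R^n)},
\end{align*}
which uses $s\geq 1/2$, together with the trace inequality $\|\varphi\|_{H^{1/2}(\R^n)} \leq C\|v\|_{H^1(\R^{n+1}_+)}$ for any extension $v$, reduces the problem to bounding the $H^1(\R^{n+1}_+)$-norm of the natural candidate $v(x,y) := (1+(-\Delta_x))^{-1/2}(\eta(x,y)\tilde h(x,y))$, which equals $(1+(-\Delta_x))^{-1/2}(\eta_1 h)$ at $y=0$ since $\eta_2(0)=1$.

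The $L^2$-norm and all tangential derivatives $\partial_{x_j}v$ for $j=1,\dots,n$ are immediately controlled by $C\|\eta\|_{C^1}\|\tilde h\|_{L^2(k\mathcal I \times [0,c])}$, because each $\partial_{x_j}(1+(-\Delta_x))^{-1/2}$ is a Fourier multiplier uniformly bounded on $L^2(\R^n)$ (applied for each fixed $y$). The decisive step is the $\partial_y v$ term. Writing
\begin{align*}
\partial_y v = (1+(-\Delta_x))^{-1/2}(\eta\, \partial_y \tilde h) + (1+(-\Delta_x))^{-1/2}((\partial_y\eta_2)\eta_1 \tilde h),
\end{align*}
I would invoke the Cauchy--Riemann identity $\partial_y \tilde h = -i\,\partial_{x_n}\tilde h$ (valid for $y>0$, applied first at $y=\epsilon$ and passed to the limit as in Lemma~\ref{lem:CR}) to rewrite $\eta\,\partial_y\tilde h = -i\,\partial_{x_n}(\eta \tilde h) + i(\partial_{x_n}\eta_1)\eta_2 \tilde h$. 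Since $(1+(-\Delta_x))^{-1/2}\partial_{x_n}$ is $L^2$-bounded, the bulk term contributes at most $C\|\eta\|_{C^1}\|\tilde h\|_{L^2(k\mathcal I \times [0,c])}$, and the three commutator/cut-off terms are of the same form. Summing all contributions and absorbing constants yields \eqref{eq:nd_boundry_bulk}.

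The main obstacle is really only bookkeeping: the Cauchy--Riemann structure lives only in the $(x_n,y)$-plane, while the tangential multiplier acts on all $n$ real variables. This causes no harm because $(1+(-\Delta_x))^{-1/2}$ commutes with $\partial_{x_n}$, so the conversion of $\partial_y$ to a tangential derivative goes through verbatim. As in the 1D proof, a brief approximation step (first assuming $\tilde h \in C^1(\R^n\times[0,c])$ and letting $\epsilon\to 0^+$) justifies the pointwise use of Cauchy--Riemann at the boundary.
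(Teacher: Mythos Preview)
Your proof is correct and follows essentially the same route as the paper: cut off, reduce $H^{-s}$ to $H^{-1/2}$, apply a Bessel potential and a trace estimate to pass into the bulk, then use the Cauchy--Riemann identity in the $(x_n,y)$ variables to convert the dangerous $\partial_y$ term into a tangential one. The only cosmetic difference is that the paper works with the one-variable multiplier $(1+(-\Delta_{x_n}))^{-1/2}$ after first estimating $\|\eta h\|_{H^{-1/2}(\R^n)} \leq \|\eta h\|_{L^2_{x'}(\R^{n-1}, H^{-1/2}_{x_n}(\R))}$, whereas you use the full $(1+(-\Delta_x))^{-1/2}$; both choices commute with $\partial_{x_n}$ and give bounded symbols, so the argument goes through either way.
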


\begin{proof}
We argue as in the proof of Lemma ~\ref{lem:CR}. Letting $\eta(x,y)=\eta_1(x)\eta_2(y)$ with $x=(x', x_n)\in \R^n$, $y\in \R_+$ be a cut-off function with $\eta =1$ on $\mathcal{I}$ and $\supp(\eta)\subset k \mathcal{I}\times [0,c]$. Then for $s\in [\frac{1}{2},1)$ and $f\in C^1(\R^{n} \times [0,c])$
\begin{align*}
\|f\|_{H^{-s}(\mathcal{I})} 
&\leq \|\eta f\|_{H^{-s}(\R^n)}
\leq \|\eta f \|_{H^{- \frac{1}{2}}(\R^n)}
\leq \|\eta f\|_{L^2_{x'}(\R^{n-1},H^{-\frac{1}{2}}_{x_n}(\R))}\\
& \leq C\big(\|\eta f\|_{L^2(\R^{n-1}\times \R^2_+)}+\|\p_{x_n}(1+(-\D_{x_n}))^{-\frac{1}{2}}(\eta f)\|_{L^2(\R^{n-1}\times \R^{2}_+)} \\
&\qquad\quad+ \| \p_{y}(1+(-\D_{x_n}))^{-\frac{1}{2}}(\eta f)\|_{L^2(\R^{n-1}\times \R^{2}_+)} \big).
\end{align*}
Applying this estimate to $\re \tilde h$ and $\im \tilde h$ and continuing the proof as the one in Lemma ~\ref{lem:CR} by using the Cauchy-Riemann equations for $\tilde h$ in the $x_n$ and $y$ variables implies the desired result.
\end{proof}

For later use, we recall a version of the fractional Poincar\'e inequality:

\begin{lem}[Lemma 2.2 in \cite{CLR18}]
\label{lem:Poincare} 
Let $I \subset \R$ be an open interval, $f\in C^\infty_c(I)$ and $s\in (0,1)$. Then,
\begin{align}\label{eq:Poincare}
\|f\|_{L^2(I)}\leq C |I|^s \|f\|_{\dot H^s(I)}.
\end{align}
\end{lem}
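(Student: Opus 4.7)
The plan is to reduce to a unit-scale estimate via a scaling argument and then to derive that estimate from the Gagliardo (singular-integral) representation of the homogeneous fractional norm. Because $f\in C_c^\infty(I)$, the zero extension of $f$ to $\R$ preserves smoothness and compact support, and $\|f\|_{\dot H^s(I)}=\|f\|_{\dot H^s(\R)}$ for this extension (using the identification $C_c^\infty(I)\subset \tilde H^s(I)$, valid for $s\in(0,1)$), so no boundary regularity issue arises.

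Writing $I=(a,a+L)$ with $L=|I|$ and setting $g(x):=f(Lx+a)$, I obtain $g\in C_c^\infty((0,1))$. From $\hat f(\xi)=L\,e^{-ia\xi}\hat g(L\xi)$ and the substitution $\eta=L\xi$ in the Fourier definition of $\dot H^s(\R)$, the standard scaling identities follow:
\begin{align*}
\|f\|_{L^2(I)}=L^{1/2}\|g\|_{L^2(\R)},\qquad \|f\|_{\dot H^s(\R)}=L^{1/2-s}\|g\|_{\dot H^s(\R)}.
\end{align*}
Thus the lemma is equivalent to the unit-scale statement $\|g\|_{L^2(\R)}\leq C(s)\|g\|_{\dot H^s(\R)}$ for every $g\in C_c^\infty((0,1))$, since multiplying both sides of that inequality by $L^{1/2}$ and using the above relations produces precisely the factor $|I|^s$.

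For the unit-scale inequality I would employ the Gagliardo representation
\begin{align*}
\|g\|_{\dot H^s(\R)}^2=c_s\int_{\R}\int_{\R}\frac{|g(x)-g(y)|^2}{|x-y|^{1+2s}}\,dx\,dy
\end{align*}
and restrict the double integral to the set $\{(x,y):x\in(0,1),\ |x-y|\geq 2\}$, on which $g(y)=0$ since $\supp g\subset(0,1)$. There $|g(x)-g(y)|^2=g(x)^2$, and a direct computation yields
\begin{align*}
\int_{|x-y|\geq 2}\frac{dy}{|x-y|^{1+2s}}=\frac{2^{1-2s}}{s}
\end{align*}
independently of $x$. Consequently
\begin{align*}
\|g\|_{\dot H^s(\R)}^2 \geq \frac{c_s\,2^{1-2s}}{s}\,\|g\|_{L^2((0,1))}^2,
\end{align*}
which gives the unit-scale inequality with an explicit constant depending only on $s$.

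No genuine obstacle arises: the argument is essentially dimensional analysis combined with the elementary observation that a function supported in a bounded interval contributes a uniform lower bound on the Gagliardo energy via pairs $(x,y)$ with $y$ far from $\supp g$. The only point requiring mild care is the equality of the seminorms $\|f\|_{\dot H^s(I)}$ and $\|f\|_{\dot H^s(\R)}$ upon extending by zero, which holds because for $s\in(0,1)$ smooth compactly supported functions on $I$ lie in $\tilde H^s(I)=H^s_{\overline I}$, and on this subspace the two seminorms coincide.
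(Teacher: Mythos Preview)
The paper does not supply its own proof of this lemma; it is quoted verbatim as Lemma~2.2 of \cite{CLR18} and used as a black box in Sections~\ref{sec:frac1/2} and~\ref{sec:sgen}. Your argument---scaling to unit length and then extracting a lower bound on the Gagliardo double integral from pairs $(x,y)$ with $y$ far from $\supp g$---is the standard route and is correct. One harmless arithmetic slip: the inner integral $\int_{|x-y|\ge 2}|x-y|^{-1-2s}\,dy$ equals $2\cdot\frac{2^{-2s}}{2s}=\frac{2^{-2s}}{s}$, not $\frac{2^{1-2s}}{s}$; this does not affect the conclusion.

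The only point that deserves a word of caution is your final remark on the identification $\|f\|_{\dot H^s(I)}=\|f\|_{\dot H^s(\R)}$. With the paper's convention $\dot H^s(I)=\{F|_I:F\in\dot H^s(\R)\}$ the natural norm is the quotient (infimum over extensions) norm, and the \emph{equality} with the zero-extension norm is not automatic. What you actually prove is $\|f\|_{L^2(I)}\le C|I|^s\|f\|_{\dot H^s(\R)}$ for the zero extension, which is exactly what is used in the paper (there one has $(|D|^{2s}g,g)_I=\|g\|_{\dot H^s(\R)}^2$ for $g\in C_c^\infty(I)$, and then the quotient norm is trivially bounded above by the zero-extension norm). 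So the application goes through regardless, but the sentence asserting equality of the two seminorms should either be justified separately or replaced by the inequality $\|f\|_{\dot H^s(I)}\le\|f\|_{\dot H^s(\R)}$, which suffices.
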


Last but not least, we give a proof of the analytic pseudolocality of the operator $(-\D)^s g(x)$ for $g \in C_c^{\infty}(I)$ and $x \in \R \setminus \overline{I}$ (see also the discussion in Section 2 in \cite{L82}):

\begin{lem}
\label{lem:analytic_pseudo}
Let $g \in C_c^{\infty}(I)$ with $I\subset \R$ bounded. Further, let $s\in \R$. Then for $x \in J \subset \R \setminus \overline{I}$, the function $(-\D)^s g(x) = \F^{-1} \big(|\cdot|^{2s} \F g(\cdot)\big)(x)$ is (real) analytic.
\end{lem}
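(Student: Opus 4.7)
The plan is to realise $(-\Delta)^s g$ as the convolution of $g$ with a kernel $K_s$ that is real-analytic on $\R\setminus\{0\}$, and then to use that $g$ is compactly supported in the bounded set $I$ disjoint from $\{x\}$ to transfer this analyticity to $(-\Delta)^s g$. Concretely, set $K_s:=\F^{-1}(|\cdot|^{2s})$, understood as a tempered distribution. For $g\in C_c^\infty(I)$ and $x_0\in J$ with $d:=\dist(x_0,\overline I)>0$, the identity
\[
(-\Delta)^s g(x)=(K_s\ast g)(x)=\int_I K_s(x-y)\,g(y)\,dy
\]
is then a genuine integral of a real-analytic integrand in $x$ for $x$ in a small real neighbourhood of $x_0$, since $|x-y|\geq d/2$ uniformly for $y\in\overline I$ when $|x-x_0|<d/2$. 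Differentiation under the integral sign will then yield Cauchy-type bounds $|\partial_x^k (-\Delta)^s g(x_0)|\leq C^{k+1}k!$, whence real-analyticity follows.

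To justify the key claim that $K_s$ agrees with a real-analytic function on $\R\setminus\{0\}$, I would reduce to the elementary Riesz potential case. If $s\in\N_0$ then $(-\Delta)^s g=(-1)^s g^{(2s)}$ is supported in $\overline I$, so the conclusion is trivial. If $s>0$ is non-integer, write $s=N+t$ with $N=\lceil s\rceil\in\N$, so that $t=s-N\in(-1,0)$, and use
\[
(-\Delta)^s g=(-\Delta)^{t}\bigl[(-1)^N g^{(2N)}\bigr],
\]
noting that $(-1)^N g^{(2N)}\in C_c^\infty(I)$ again. For $t\in(-1/2,0)$ one has the explicit Riesz potential representation
\[
(-\Delta)^t h(x)=c_t\int_\R|x-y|^{-1-2t}h(y)\,dy,
\]
whose kernel $(x,y)\mapsto |x-y|^{-1-2t}$ is manifestly real-analytic on $\{x\neq y\}$ and, for $y\in\overline I$ and $x$ in a complex disk around $x_0$ of radius $<d/2$, the branch of $(x-y)^{-1-2t}$ is single-valued on the simply connected region $\C\setminus\{y\}$ that the integrand explores, so the integral inherits analyticity; the same applies after composition with the local operator $(-\Delta)^N$. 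Negative values of $s$ are handled analogously after an initial reduction via local differentiation and/or the analytic continuation in $s$ of the Riesz potential.

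The principal obstacle is the uniform treatment of all $s\in\R$: for half-integer values $2s\in -1-2\N_0$ the naive Riesz formula acquires logarithmic corrections, and for $s<-1/2$ one must define $|\xi|^{2s}$ and $K_s$ by regularization before the convolution identity makes literal sense. A robust alternative that avoids this case analysis is to work purely on the Fourier side: by Paley--Wiener, $\hat g$ extends to an entire function satisfying $|\hat g(\zeta)|\leq C_N(1+|\zeta|)^{-N}e^{H_I(\im\zeta)}$ with $H_I$ the support function of $\overline I$. Applying a suitable contour deformation in the Fourier inversion integral (pushing the half-lines $\R_\pm$ slightly into upper/lower half-planes, using that $\zeta^{2s}$ is analytic off the appropriate branch cut) and exploiting $\dist(x_0,\overline I)>0$, one directly exhibits a holomorphic extension of $(-\Delta)^s g$ to a complex neighbourhood of $x_0$, yielding real-analyticity.
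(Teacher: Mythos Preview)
Your plan is sound and would work, but it takes a genuinely different route from the paper's proof. You reach for the explicit convolution kernel $K_s=\F^{-1}(|\cdot|^{2s})$ and its real-analyticity on $\R\setminus\{0\}$, then reduce general $s$ to the Riesz-potential range by peeling off local derivatives. This is elementary and transparent when the kernel is known in closed form; the only loose end is that your reduction to $t\in(-1/2,0)$ leaves $t\in(-1,-1/2]$ implicit (the same formula with exponent $-1-2t\in[0,1)$ still applies, and at $t=-1/2$ one picks up a $\log|x-y|$ kernel, which is equally analytic off the diagonal). With that remark your kernel argument covers all $s$, and the contour-deformation alternative is not actually needed.

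The paper instead runs the standard \emph{analytic pseudolocality} machinery: split
\[
\F^{-1}\big(|\xi|^{2s}\hat g\big)=\F^{-1}\big(|\xi|^{2s}\chi_N\,\hat g\big)+\F^{-1}\big(|\xi|^{2s}(1-\chi_N)\,\hat g\big)
\]
with an Ehrenpreis--Treves cutoff $\chi_N$ satisfying $|\partial^\alpha\chi_N|\le(CN)^{|\alpha|}$ for $|\alpha|\le N$. The low-frequency piece has compactly supported Fourier transform and hence obeys Cauchy bounds $|D_x^\beta(\cdot)|\le(C|\beta|)^{|\beta|}$ for $|\beta|\le N$; the high-frequency piece is treated as an oscillatory integral, integrating by parts $\beta+2+[|s|]$ times in $\xi$ and using $|x-y|\ge\rho>0$ for $y\in\supp g$, $x\in J$, to produce the same Cauchy-type growth. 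Since $N$ is arbitrary, real-analyticity follows.

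The trade-off: the paper's argument is uniform in $s$, requires no explicit kernel and no case distinctions, and makes clear that the statement is just analytic pseudolocality for a classical symbol---which is the viewpoint used elsewhere in the article. Your approach is more concrete and arguably more accessible, but buys that simplicity at the price of a small case analysis in~$s$.
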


\begin{proof}
The argument follows along the same lines as the usual proof of pseudolocality of pseudodifferential operators. More precisely, for $\chi_N$ a cut-off function supported in $B_2$ and equal to one on $B_{1}$ with bounds as for instance in \cite[Lemma 1.1 in Chapter 5]{T80}, we consider the splitting
\begin{align*}
\F^{-1}\big( |\xi|^{2s} \F g(\xi)\big)(x)
= \F^{-1} \big(|\xi|^{2s} \chi_N(\xi) \F  g(\xi)\big)(x) + \F^{-1} \big(|\xi|^{2s} (1- \chi_N)(\xi) \F  g(\xi)\big)(x) .
\end{align*}
The function $\F^{-1} \big(|\xi|^{2s} \chi_N(\xi) \F  g(\xi)\big)(x)$ then by construction satisfies bounds of the type
\begin{align*}
|D_x^{\beta} \big(\F^{-1} \big(|\xi|^{2s} \chi_N(\xi) \F  g(\xi)\big)\big)(x)| \leq (C|\beta|)^{|\beta|},
\end{align*}
if $x\in J$ and $|\beta|\leq N$. 

For the second contribution we argue as usually for oscillatory integrals noting that 
\begin{align*}
\frac{d}{d\xi} e^{i (x-y)\xi} = {i(x-y)} e^{i(x-y)\xi},
\end{align*}
which leads to
\begin{align*}
&D_x^{\beta}\big(\F^{-1} \big(|\xi|^{2s} (1- \chi_N)(\xi) \F  g(\xi)\big)\big)(x)
 =  \int\limits_{\R} (i \xi)^{\beta} e^{ix\xi} |\xi|^{2s} (1-\chi_N)(\xi) \F g(\xi) d\xi \\
& \quad  = \int\limits_{\R} \int\limits_{\R}  e^{i(x-y)\xi}\left( -\frac{d}{i d\xi} \right)^{\beta+2+[|s|]}\big(|\xi|^{2s} (i\xi)^{\beta}(1-\chi_N)(\xi)\big) d\xi \frac{g(y)}{(x-y)^{\beta+ 2 + [|s|]}}dy,
\end{align*}
where $[\cdot]$ denotes the ceil function.
Due to the bounds on $\chi_N$ for $|\beta| \leq N-2-[|s|]$ and the fact that $\supp(\chi_N) \subset B_2$, we obtain that
\begin{align*}
\left|\left( \frac{d}{i d\xi} \right)^{\beta+2 + [|s|]}\big(|\xi|^{2s} (i\xi)^{\beta}(1-\chi_N(\xi))\big)\right| 
&\leq C |\beta + 2 + [|s|]|! \langle \xi \rangle^{-2}\\ &\leq (C |\beta|)^{|\beta|} \langle \xi \rangle^{-2} .
\end{align*}
As the index $N\geq 1$ can be chosen arbitrarily large in the cut-off function $\chi_N$ and since $|x-y|\geq \rho$ if $y \in \supp(g)$ and $x\in J$, this implies the desired analyticity bounds.
\end{proof}

\section{Quantitative Unique Continuation for Fractional Operators}
\label{sec:frac1D}

In this section we introduce the key ``comparison'' ideas in the model setting of the one-dimensional fractional Laplacian which we exploit in deducing the logarithmic stability estimates in the sequel. As already in the qualitative arguments in \cite{RS17b} the existence of a branch-cut for the complex extension of the symbol $|\xi|^s$ with $s\in \R \setminus \Z$ is the key mechanism underlying our arguments. We will present the details of this argument for the case $s= \frac{1}{2}$ and then for the case $s \in [\frac{1}{2},1)$. We however remark that slightly modified versions of this result hold for all values of $s\in \R \setminus \Z$. We emphasise that the logarithmic stability properties of the fractional Laplacian in any dimension had been previously addressed in \cite{RS17}, where quantitative unique continuation results had been deduced by means of robust Carleman inequalities (which in particular allow for the treatment of variable coefficients). The main novelty here is to provide a ``simple'' method which applies to constant coefficient operators.

\subsection{The half-Laplacian}
\label{sec:frac1/2}

We start by illustrating the ideas for the simplest case of  the half-Laplacian:

\begin{thm}
\label{thm:half}
Let $I, J \subset \R$ be  open, connected, bounded, non-empty intervals with $\overline{I}\cap \overline{J} = \emptyset$.
Then there exist constants $\mu>0$, $C>0$ such that for  $g\in C^\infty_c(I)$ we have
\begin{align*}
\|g\|_{H^1(I)}\leq Ce^{C\left(\frac{\|g\|_{H^1(I)}}{\|g\|_{L^2(I)}}\right)^\mu}\||D|g\|_{H^{-\frac{1}{2}}(J)}.
\end{align*}
\end{thm}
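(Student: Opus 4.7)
I would realise the branch-cut idea by coupling $|D|g$ with a local first derivative so that the resulting function extends holomorphically into the upper half-plane, placing us in the framework of Theorem~\ref{thm:analytic1D}. The symbol $|\xi|$ has a kink at the origin and does not extend analytically to any half-plane, but the combination
\[
 h(x):=|D|g(x)-i\partial_x g(x),\qquad x\in\R,
\]
has Fourier transform $\widehat h(\xi)=(|\xi|+\xi)\widehat g(\xi)=2\xi\chi_{\{\xi>0\}}\widehat g(\xi)$, supported on $\R_+$. Consequently
\[
 \tilde h(z):=\frac{1}{\pi}\int_{0}^{\infty}\xi\,\widehat g(\xi)\,e^{iz\xi}\,d\xi
\]
is holomorphic on $\C_+$ and continuous up to $\R$, and lies in $L^{\infty}(\C_+)\cap L^{2}(\C_+)$ thanks to the Schwartz decay of $\widehat g$ and the damping factor $e^{-y\xi}$. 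For real $g$ one has $\re h=|D|g$ and $\im h=-\partial_x g$; in particular $\im h|_J\equiv 0$ (since $\partial_x g=0$ on $J$) and $\re h|_J=|D|g|_J$ is real-analytic by Lemma~\ref{lem:analytic_pseudo} (equivalently, from the kernel formula $|D|g(x)=-C\int_I\frac{g(y)}{(x-y)^{2}}\,dy$ valid on $\R\setminus\overline I$). Hence $h$ fulfils the hypotheses of Theorem~\ref{thm:analytic1D} and Lemma~\ref{lem:CR}, and moreover $\|h\|_{H^{-1/2}(J)}=\||D|g\|_{H^{-1/2}(J)}$.

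Next, I would verify the a priori bound $\mathcal M\leq C\|g\|_{H^{1}(I)}$ on the constant in Theorem~\ref{thm:analytic1D}, taking $s_1=\tfrac12$ and $s_2=\tfrac32$. The bulk piece $\|\tilde h\|_{H^{1/2}(K\times[0,2])}$ is bounded via Lemma~\ref{lem:CR}, equation \eqref{eq:bulkboundary}, by $C\||D|^{-1/2}h\|_{H^{1/2}(\R)}$, and a short Plancherel computation shows that each of $\||D|^{1/2}g\|_{H^{1/2}(\R)}$ and $\||D|^{-1/2}\partial_x g\|_{H^{1/2}(\R)}$ is $\lesssim\|g\|_{H^{1}(I)}$. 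The boundary piece $\|h\|_{H^{3/2}(J)}=\||D|g\|_{H^{3/2}(J)}$ is bounded by $C\|g\|_{L^{2}(I)}$ via derivative estimates coming from the kernel formula on $J$. Applying Theorem~\ref{thm:analytic1D} with $s=\tfrac12$ then yields
\[
 \|\tilde h\|_{L^{2}(I\times[0,1])}\leq\frac{C\|g\|_{H^{1}(I)}}{\bigl|\log\bigl(C\|g\|_{H^{1}(I)}/\||D|g\|_{H^{-1/2}(J)}\bigr)\bigr|^{\nu}},
\]
and a finite iteration of three-balls inequalities in the interior of $\C_+$ (of the type used in the proof of Theorem~\ref{thm:analytic1D}) upgrades this to $\|\tilde h\|_{L^{2}(I\times[0,T])}\leq C\|g\|_{H^{1}(I)}/|\log|^{\nu'}$ for any fixed $T>0$, with a possibly degraded exponent $\nu'\in(0,\nu]$.

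Finally, to convert the bulk smallness of $\tilde h$ into an $L^{2}$-bound on $g$, I would exploit analyticity to produce a recovery formula. Setting $A(z):=\frac{1}{2\pi}\int_{0}^{\infty}\widehat g(\xi)e^{iz\xi}d\xi$, one has $\tilde h=-2iA'$ and $A(\cdot+iy)\to 0$ as $y\to\infty$, and since $g$ is real the harmonic extension of $g$ equals $2\re A$; therefore
\[
 g(x)=2\re A(x)=\re\int_{0}^{\infty}\tilde h(x+it)\,dt.
\]
Splitting the integral at $T$ and using $\int_{T}^{\infty}\tilde h(\cdot+it)\,dt=2A(\cdot+iT)$, a Cauchy--Schwarz computation yields $\|A(\cdot+iT)\|_{L^{2}(I)}\leq C|I|^{1/2}T^{-1/2}\|g\|_{L^{2}(I)}$, so for $T$ large enough (depending only on $|I|$) the tail is at most $\tfrac12\|g\|_{L^{2}(I)}$ and can be absorbed into the left-hand side, leaving
\[
 \|g\|_{L^{2}(I)}\leq\frac{C\|g\|_{H^{1}(I)}}{\bigl|\log\bigl(C\|g\|_{H^{1}(I)}/\||D|g\|_{H^{-1/2}(J)}\bigr)\bigr|^{\nu'}}.
\]
Isolating the logarithm and exponentiating delivers the claim with $\mu=1/\nu'$. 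The main conceptual obstacle is the construction of $h$: the imaginary correction $-i\partial_x g$ cancels the negative-frequency half of $\widehat{|D|g}$, which simultaneously restores analyticity in $\C_+$, keeps $h|_J$ equal to the measured quantity $|D|g|_J$, and makes $\im h|_I$ carry the $H^{1}$-information of $g$ through $\partial_x g$ and the above recovery formula; the most delicate technical step is the tail-absorption, which forces a careful coupling between the choice of $T$ and the three-balls extension of Theorem~\ref{thm:analytic1D} from $[0,1]$ to $[0,T]$.
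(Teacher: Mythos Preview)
Your argument is correct, and your single comparison function $h=|D|g-i\partial_x g$ is exactly the paper's $h_1$. The difference lies entirely in the recovery step, where you and the paper part ways.

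The paper introduces \emph{two} comparison functions, your $h$ together with its reflected counterpart $h_2=|D|g+i\partial_x g$, which extends to the \emph{lower} half-plane. After running Theorem~\ref{thm:analytic1D} on each (using \eqref{eq:boundarybulk} to pass from the bulk $L^2$-norm on $I\times[0,1]$ back to $\|h_j\|_{H^{-1/2}(I)}$), the paper simply adds: $h_1+h_2=2|D|g$ on all of $\R$, so
\[
\||D|g\|_{H^{-1/2}(I)}\le\|h_1\|_{H^{-1/2}(I)}+\|h_2\|_{H^{-1/2}(I)},
\]
and then the duality/Poincar\'e one-liner
\[
\||D|g\|_{H^{-1/2}(I)}\ge\frac{(|D|g,g)}{\|g\|_{H^{1/2}}}=\frac{\|g\|_{\dot H^{1/2}}^2}{\|g\|_{H^{1/2}}}\ge c\|g\|_{H^{1/2}(I)}
\]
immediately produces $\|g\|_{L^2(I)}$ on the left. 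No vertical integration, no tail, no extension beyond height~$1$.

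Your route replaces the second function by the identity $g(x)=\re\int_0^\infty\tilde h(x+it)\,dt$ and a tail cut at a fixed height $T=T(|I|)$. This is legitimate: the tail bound $\|A(\cdot+iT)\|_{L^2}\le C|I|^{1/2}T^{-1/2}\|g\|_{L^2(I)}$ follows from $|\hat g(\xi)|\le|I|^{1/2}\|g\|_{L^2}$ and $\int_0^\infty e^{-2T\xi}d\xi=(2T)^{-1}$, and the extension of the bulk estimate from $[0,1]$ to the fixed strip $[0,T]$ costs only a finite chain of interior three-balls inequalities (the large-ball norms being controlled by the same $H^{1/2}$ bulk bound on a wider region). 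The price is a somewhat longer argument and a degraded exponent $\nu'$; the gain is that you never need the lower-half-plane function $h_2$. For the general $s\in[\tfrac12,1)$ case treated next in the paper the two-function/duality trick scales more cleanly, which is presumably why the paper prefers it.
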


\begin{proof} 
Let us define the following functions
\begin{align*}
h_1(x)&=\left(|D|g+Dg\right)(x)=2\int_0^\infty e^{ix\xi}\xi \mathcal Fg(\xi)d\xi, \\
h_2(x)&=\left(|D|g-Dg\right)(x)=-2\int_{-\infty}^0 e^{ix\xi}\xi \mathcal Fg(\xi)d\xi.
\end{align*}
Hence, $\re h_j=|D|g$ and $\im h_j= (-1)^j g'$, where the dash denotes the one-dimensional differentiation with respect to $x\in \R$. In particular, by virtue of our assumptions, $\im h_j|_J=0$.
By analytic pseudolocality as in Lemma ~\ref{lem:analytic_pseudo}, for $x\in \R \setminus \overline{I}$ the function
\[|D|g(x)=\big(\mathcal F^{-1} \left(|\cdot|\mathcal Fg(\cdot)\right)\big)(x)
\] 
is analytic.
Moreover, the functions $h_j$, $j\in \{1,2\}$, have analytic extensions $\tilde h_j$ to the upper/lower  complex half-planes, respectively.
In order to apply Theorem ~\ref{thm:analytic1D} (and Remark ~\ref{rmk:lower}) to them, we first notice that for $j \in \{1,2\}$
\begin{align*}
\|\tilde h_j\|_{H^{\frac{1}{2}}(K\times[0,\pm2])}
&\overset{\eqref{eq:bulkboundary}}{\leq} C\||D|^{-\frac{1}{2}} h_j\|_{H^{\frac{1}{2}}(\R)}
\leq C\||D|^{\frac{1}{2}} g\|_{H^{\frac{1}{2}}(\R)}
\leq C\|g\|_{H^1(I)},\\
\| h_j\|_{H^{2}(J)}
& =\||D|g\|_{H^2(J)}\leq C\|g\|_{L^2(I)}.
\end{align*}
Here the second bound follows from the pseudolocality of the fractional Laplacian.

Next, we set $k=\frac{|I|+\dist(I,J)}{|I|}>1$. Therefore, by definition, $\overline{kI}\cap\overline J=\emptyset$. Thus, by  \eqref{eq:boundarybulk} and Theorem ~\ref{thm:analytic1D} for $s=\frac{1}{2}$, $s_1 = \frac{1}{2}$, $s_2 = 2$ and $M=C\|g\|_{H^1(I)}$, we have
\begin{align}
\label{eq:first_step}
\|h_j\|_{H^{-\frac{1}{2}}(I)}
\leq C \|g\|_{H^1(I)}\frac{1}{\left|\log\left( C\frac{\| h_j\|_{H^{-\frac{1}{2}}(J)}}{\|g\|_{H^1(I)}}\right) \right|^\mu}, \quad j=1,2,
\end{align}
where 
\begin{align*}
\| h_j\|_{H^{-\frac 1 2}(J)}&{=}\||D|g\|_{H^{-\frac 1 2}(J)}.
\end{align*}

Finally, we seek to bound the left hand side of \eqref{eq:first_step} from below by $\|g\|_{L^2(I)}$. This follows from the observation that $h_1 + h_2 = 2|D|g$ and the fact that
\begin{align*}
\||D|g\|_{H^{-\frac{1}{2}}(I)}=\sup_{\varphi\in\tilde H^{\frac{1}{2}}(I)} \frac{|(|D|g,\varphi)_I|}{\|\varphi\|_{H^{\frac{1}{2}}(I)}}\geq \frac{|(|D|g,g)_I|}{\|g\|_{H^\frac{1}{2}(I)}}\geq \frac{\|g\|_{\dot H^\frac{1}{2}(I)}^2}{\|g\|_{H^\frac{1}{2}(I)}}
\overset{\eqref{eq:Poincare}}{\geq} C \|g\|_{\dot H^\frac{1}{2}(I)},
\end{align*}
which in combination with \eqref{eq:first_step} implies
\begin{align*}
\|g\|_{L^2(I)}&\overset{\eqref{eq:Poincare}}{\leq}C\|g\|_{\dot H^\frac{1}{2}(I)}\leq C\||D| g\|_{H^{-\frac{1}{2}}(I)}
\leq C\left(\|h_1\|_{H^{-\frac{1}{2}}(I)}+ \|h_2\|_{H^{-\frac{1}{2}}(I)}\right)\\
&\leq C \|g\|_{H^1(I)}\frac{1}{\left|\log\left( C\frac{\||D|g\|_{H^{-\frac 1 2}(J)}}{\|g\|_{H^1(I)}}\right) \right|^\nu}.
\end{align*}
This is equivalent to the desired estimate from Theorem ~\ref{thm:half} with $\mu=\nu^{-1}$.
\end{proof}

Let us summarize our strategy for deducing the preceding stability estimates.
For $s= \frac{1}{2}$, the previous quantitative unique continuation result for the half-Laplacian used two ingredients:
\begin{itemize}
\item[(i)] As a first ingredient, we relied on the fact that the symbol $|\xi|$ is \emph{not analytic}. However, on the positive and negative lines, respectively, it is equal to the symbols $\xi$ or $-\xi$. The associated operators $D$ and $-D$ were used as ``comparison operators''. This allowed us to extend the
sum $(|D|+D)g$ and the difference $(|D|-D)g$ analytically into the upper/lower half-planes if $g$ was compactly supported (sufficient decay would have sufficed). Hence, quantitative propagation of smallness arguments could be used for analytic functions.
\item[(ii)] As our second ingredient, we exploited that the operator $D$ is a \emph{local operator}. As a consequence, the difference and sum operators $(|D|-D)g$ and $(|D|+D)g$ coincided with $|D|g$, if restricted to subsets outside of the support of $g$.
\end{itemize}
A similar strategy had been used in \cite{L82} to deduce qualitative antilocality results.

\begin{rmk}
\label{rmk:Hilbert}
We remark that with a similar argument as above, it is also possible to recover a variant of the stability estimate of Theorem ~\ref{thm:qucHilbert_a}: In order to avoid issues with the homogeneous function spaces, we consider $g= f'$ for some function $f \in C^{\infty}_c(I)$. Noting that the symbol of the Hilbert transform is given by $-i \sign(\xi)$ and defining our comparison functions to be 
\begin{align*}
h_1(x) = Hg(x) - i g(x), \  \ h_2(x) = Hg(x) + i g(x),
\end{align*}
carrying out the unique continuation arguments from Theorem ~\ref{thm:analytic1D} for the functions $h_j$ and returning from an estimate for $h_j$, $j\in \{1,2\}$, to an estimate for $f$, it is for instance possible to arrive at
\begin{align*}
\|f\|_{H^{\frac{1}{2}}(I)} \leq C \|f\|_{H^1(I)} \frac{1}{\left| \log\left( C \frac{\|H f\|_{L^2(J)}}{\|f\|_{H^1(I)}} \right) \right|^{\nu}}, \ f\in C_c^{\infty}(I).
\end{align*}
We emphasise that the choice of the norms is of course flexible (and could be shifted). 
\end{rmk}

While the construction of the comparison operator $D$ for $s=\frac{1}{2}$ (or $i \mbox{Id}$ for the Hilbert transform) was ``straight forward'', for more general operators this requires more care and information on both sides of the subset $I$.

\subsection{The fractional Laplacian for $s \in \left[\frac{1}{2},1\right)$}
\label{sec:sgen}
We now extend the logarithmic stability estimates to the whole class of (one-dimensional) operators of the type
\begin{align*}
|\xi|^{s}, \ s \in \left[\frac{1}{2},1 \right),
\end{align*}
and deduce associated observability type estimates/stability estimates for these. 

We pursue the same strategy as in the case $s=\frac{1}{2}$, however we now have to be more careful in the construction of the comparison operators, which will in general \emph{not} be local operators. For $s\in [\frac{1}{2},1)$ we provide the argument for the quantitative unique continuation result from Theorem ~\ref{thm:s_gen_1D}. Similar arguments (with an appropriate replacement of Lemma ~\ref{lem:CR}) would allow us to transfer this to general values of $s \in \R \setminus \Z$. In order to avoid case distinctions in the formulation of this, we refrain from stating the precise estimates in this more general setting and only discuss the situation of Theorem ~\ref{thm:s_gen_1D}.

\begin{proof}[Proof of Theorem ~\ref{thm:s_gen_1D}]
We mimic the comparison type argument from the previous section splitting the proof of the theorem into two steps.\\

\emph{Step 1: Set-up and comparison operators.}
We pursue the same strategy as in the previous section.
To this end, we introduce polar coordinates $\xi = |\xi| e^{i \varphi}$ with $\varphi$ in a suitable interval and consider the operators (defined in terms of their Fourier symbols)
\begin{align}\label{eq:Pjs}
P_{s}^1(\xi) = e^{2si(\varphi_+-\pi)}|\xi|^{2s},\ \varphi_+ \in \left[-\frac{1}{2}\pi,\frac{3}{2}\pi \right),\quad \ P_{s}^2(\xi) = e^{2si\varphi_-}|\xi|^{2s}, \ \varphi_- \in \left(-\frac{3}{2}\pi,\frac{1}{2}\pi \right],
\end{align} 
which are analytic extensions of $|\xi|^{2s}$ coinciding with $|\xi|^{2s}$  if $\xi \leq 0$ and $\xi \geq 0$, respectively. The branch-cuts are chosen to be on the negative/positive imaginary axes. Here the respective intervals for the angle refers to our choice of the branch of the logarithm. 
 By construction, $P_s^j(\xi)$ has an analytic extension into the upper/lower half-plane satisfying polynomial growth conditions. For $g\in C_c^{\infty}(\R^n)$ the Paley-Wiener theorem implies that $\hat{g}$ is an analytic function. Therefore,  
\begin{align*}
P_{s}^j(\xi) \hat{g}(\xi)
\end{align*}
has an analytic extension into the upper/lower half-plane. But then  $P_{s}^j(D)g(x)$ has support only on the half-line on the right/left of $I$ (see Theorem ~\ref{thm:analytic_supp}). Thus, we have that $P_{s}^j(D)g(x)|_{J_j}=0$.
\medskip

\emph{Step 2: Application of the quantitative analytic continuation estimates.}
Let us consider the functions
\begin{align*}
h_1(x)=\left(|D|^{2s}-P_{s}^1(D)\right)g(x), \quad h_2(x)=\left(|D|^{2s}-P_{s}^2(D)\right)g(x).
\end{align*}
Because of the previous discussion,
we have
\begin{align*}
h_j|_{J_j}=|D|^{2s} g|_{J_j}, \quad j=1,2,
\end{align*}
and therefore $\im h_j|_{J_j}=0$ and $\re h_j|_{J_j} = h_j|_{J_j}$ is real analytic by virtue of Lemma ~\ref{lem:analytic_pseudo}.
As the functions $h_j$ can be written as
\begin{align*}
h_1(x) &= \int\limits_{0}^{\infty} e^{i x \xi} |\xi|^{2s} (1- e^{-2s \pi i} ) \hat{g}(\xi) d\xi,\\
h_2(x) &= \int\limits_{-\infty}^{0} e^{i x \xi} |\xi|^{2s} (1- e^{-2s \pi i} ) \hat{g}(\xi) d\xi,
\end{align*}
they have analytic extensions into the upper/lower complex half-planes. As a consequence, the size of
\begin{align}
\label{eq:I}
\big(|D|^{2s}-P_{s}^j(D)\big)g(x)|_{I}, \quad j=1,2
\end{align}
can be quantified in terms of $|D|^{2s} g|_{J}$ by means of Theorem ~\ref{thm:analytic1D}. 
Heading towards an application of Theorem ~\ref{thm:analytic1D}, we note that
\begin{align}
\label{eq:E_estimates}
\begin{split}
\|\tilde h_j\|_{H^{\frac{1}{2}}(K\times[0,\pm2])}
&\overset{\eqref{eq:bulkboundary}}{\leq} C\||D|^{-\frac{1}{2}}h_j\|_{H^{\frac{1}{2}}(\R)}\leq C \||D|^{2s-\frac{1}{2}}g\|_{H^{\frac{1}{2}}(\R)}\leq C \|g\|_{H^{2s}(I)},\\
\| h_j\|_{H^{2}(J_j)}
&{=}\||D|^{2s}g\|_{H^2(J_1)}\leq C\|g\|_{L^2(I)}.
\end{split}
\end{align}
The last bound follows from the pseudolocality of the fractional Laplacian.
Thus, invoking \eqref{eq:boundarybulk} and Theorem ~\ref{thm:analytic1D} with $s_1 = \frac{1}{2}$, $s_2 = 2$ and $M=C\|g\|_{H^{2s}(I)}$, we infer that
\begin{align}
\label{eq:quant}
\|h_{j}\|_{H^{-s}(I)}\leq C \|g\|_{H^{2s}(I)}\frac{1}{\left|\log \left( C\frac{\| h_j\|_{H^{-s}(J_j)}}{\|g\|_{H^{2s}(I)}} \right) \right|^\nu}, \quad j=1,2.
\end{align}

It remains to deduce information on $g|_{I}$ from this, i.e., to bound the left hand side of \eqref{eq:quant} from below in terms of $g|_{I}$. To this end, we notice that 
\begin{align*}
h_1(x)+h_2(x)=P_{s}(D) g(x),
\end{align*} with $P_{s}(\xi)=2|\xi|^{2s}-P_{s}^1(\xi)-P_{s}^2(\xi)$, which on the real line turns into
\begin{align*}
P_{s}(\xi)=(1-e^{-2si\pi})|\xi|^{2s}.
\end{align*}
Hence,
\begin{align*}
\|P_{s}(D)g\|_{H^{-s}(I)}&=\sup_{v\in \tilde H^s(I)}\frac{|(P_{s}(D)g,v)_{L^2(I)}|}{\|v\|_{H^s(\R)}}
\geq \frac{|(P_{s}(D)g,g)_{L^2(I)}|}{\|g\|_{H^s(I)}}\\
&=\frac{|(P_{s}(D)g,g)_{L^2(\R)}|}{\|g\|_{H^s(I)}}
=\frac{|(P_{s}(\xi)\hat g,\hat g)_{L^2(\R)}|}{\|g\|_{H^s(I)}}\\
&\geq |1-e^{2is\pi}|\frac{\|g\|_{\dot H^s(I)}^2}{\|g\|_{H^s(I)}}
\overset{\eqref{eq:Poincare}}{\geq}  C |1-e^{-2is\pi}|\|g\|_{ H^s(I)}.
\end{align*}
As a consequence of this, we obtain the desired result
\begin{align}
\label{eq:quant1}
\begin{split}
\|g\|_{L^2(I )}
&\leq C\|P_s(D)g\|_{H^{-s}(I)}
\leq C(\|h_{1}\|_{H^{-s}(I)}+\|h_{2}\|_{H^{-s}(I)} )
\\&\leq C\|g\|_{H^{2s}(I)}\frac{1}{\left| \log \left( C\frac{\||D|^{2s}g\|_{H^{-s}(J)}}{\|g\|_{H^{2s}(I)}} \right) \right|^\nu}.
\end{split}
\end{align}
The claim of the theorem then follows from rearranging the estimate.
\end{proof}

In view of the application to quantitative Runge approximation which we have in mind (see Section ~\ref{sec:applic}), we make the following observation on the choice of the values $s_1, s_2$ and $M$ in the proof of Theorem ~\ref{thm:s_gen_1D}.

\begin{rmk}
\label{rmk:Runge_applic}
Instead of considering the norms in \eqref{eq:E_estimates}, it would also have been admissible (see Theorem ~\ref{thm:analytic1D} where this corresponds to the cases $s_1=\delta$ and $s_2=2$) to consider the bounds
\begin{align}
\label{eq:E_estimate_alt}
\begin{split}
\|\tilde h_j\|_{H^{\delta}(K \times [0,\pm2])} &\leq C \| g\|_{H^{2s + \delta-\frac{1}{2}}(I)},\\
\| h_j\|_{H^2(J)} &\leq C \|g\|_{L^2(I)}.
\end{split}
\end{align}
Here the first estimate follows from a boundary-bulk estimate for the harmonic extension.
Instead of arriving at the estimate \eqref{eq:quant1}, this choice of the parameters $s_1, s_2$ would have resulted in the bound
\begin{align}
\label{eq:quant2}
\|g\|_{H^s(I)} \leq C \|g\|_{H^{2s +\delta - \frac{1}{2}}(I)} \frac{1}{\left| \log\left( C \frac{\||D|^{2s} g\|_{H^{-s}(J)}}{\|g\|_{H^{2s+\delta-\frac{1}{2}}(I)}} \right) \right|^{\nu}}.
\end{align}
In the application of our estimates to quantitative Runge approximation in Section ~\ref{sec:applic} we will rely on estimates of this form.
\end{rmk}

\section{Extensions to More Complex Operators in Higher Dimensions}

We discuss new, possible extensions of our results to higher dimensions and more general operators. More precisely, we investigate combinations of local and nonlocal operators in different (possibly non-elliptic) combinations, possibly also involving pseudodifferential operators.  We only discuss the result and argument of Theorem ~\ref{thm:s_gen_nD} as one sample result in this spirit. Qualitative results in this spirit had earlier been discussed in \cite{DSV16, RS17a}. In \cite{RS17a} also quantitative results had been deduced for the fractional heat equation as a specific model operator by means of quantitative unique continuation results for the Caffarelli-Silvestre extension. In the present article we use our analytic comparison arguments to deduce new logarithmic estimates for the described, rather general class of nonlocal operators without relying the Caffarelli-Silvestre extension.

\begin{proof} [Proof of Theorem ~\ref{thm:s_gen_nD}]
We argue as in the proof of Theorem ~\ref{thm:s_gen_1D}, considering the functions
\begin{align*}
h_1(x)=\left(|D_{x_n}|^{2s}-P_{s}^1(D_{x_n})\right)g(x), \quad h_2(x)=  \left(|D_{x_n}|^{2s}-P_{s}^2(D_{x_n})\right)g(x),
\end{align*}
with  $P^j_s$ as in \eqref{eq:Pjs}.
By construction of $h_j$ and $P(D)=|D_{x_n}|^s+L(D)+m(D')$, we observe that
\begin{align*}
h_j|_{\mathcal J_j}=|D_{x_n}|^{2s}g|_{\mathcal J_j}=P(D) g|_{\mathcal J_j}.
\end{align*}
As in the previous sections, we have $\|\tilde h_j\|_{H^{\frac{1}{2}}(\mathcal{K}\times [0,\pm 2])} + \|h_j\|_{H^{2}(\mathcal{J})}\leq  C\|g\|_{H^{2s}(\mathcal I)} $. 
Therefore,  applying \eqref{eq:nd_boundry_bulk}  and Theorem ~\ref{thm:analyticnD} to $h_j$ with $s_1=\frac{1}{2}$, $s_2=2$, $\mathcal M=C\|g\|_{H^{2s}(\mathcal I)}$ and $I$ and $J$ substituted for slightly smaller subsets, we obtain
\begin{align*}
\|h_j\|_{H^{-s}(\mathcal I)}
\leq  C\|g\|_{H^{2s}(\mathcal I)}\frac{1}{\left| \log \left( C\frac{\|P(D)g\|_{H^{-s}(\mathcal J)}}{\|g\|_{H^{2s}(\mathcal I)}}\right) \right|^\nu}.
\end{align*}
Notice that, as in the proof of Theorem ~\ref{thm:s_gen_1D}, $h_1(x)+h_2(x)=P_s(D_{x_n})g(x)$, with $P_s(\xi_n)=2|\xi_n|^{2s}-P^1_s(\xi_n)-P^2_s(\xi_n)$, so
\begin{align}\label{eq:estimate1}
\|P_{s}(D_{x_n})g\|_{H^{-s}(\mathcal I)}
\leq  C \|g\|_{H^{2s}(\mathcal I)}\frac{1}{\left| \log \left( C\frac{\|P(D)g\|_{H^{-s}(\mathcal J)}}{\|g\|_{H^{2s}(\mathcal I)}}\right) \right|^\nu}.
\end{align}
In addition,
\begin{align*}
\|P_s(D_{x_n})g\|_{H^{-s}(\mathcal I)}
\geq \frac{|(P_s(D_{x_n})g,g)_{L^2(\mathcal I)}|}{\|g\|_{H^s(\mathcal I)}}
\geq |1-e^{2is\pi}|\frac{\|g\|_{L^2(Q, \dot H^s(I))}^2}{\|g\|_{H^s(\mathcal I)}}.
\end{align*}
Therefore,
\begin{align*}
\|g\|_{L^2(\mathcal I)}\leq C \|g\|_{L^2(Q, \dot H^s(I))}
\leq C \|P_s(D_{x_n})g\|_{H^{-s}(\mathcal I)}^{\frac 1 2}
\|g\|_{H^{s}(\mathcal I)}^{\frac 1 2},
\end{align*}
which together with \eqref{eq:estimate1} and the bound $\|g\|_{H^{s}(\mathcal{I})} \leq \|g\|_{H^{2s}(\mathcal{I})}$ yields
\begin{align}
\label{eq:estimate2}
\|g\|_{L^2(\mathcal I)}
\leq  C \|g\|_{H^{2s}(\mathcal I)}\frac{1}{\left| \log \left( C\frac{\|P(D)g\|_{H^{-s}(\mathcal J)}}{\|g\|_{H^{2s}(\mathcal I)}}\right) \right|^{\frac{\nu}{2}}}.
\end{align}
This leads to the desired result with $\mu=2\nu^{-1}$.
\end{proof}

\begin{rmk}
\label{rmk:Runge_applic_mult}
As in Remark ~\ref{rmk:Runge_applic} also in the proof of Theorem ~\ref{thm:s_gen_nD} we could  have chosen $\mathcal{M}\geq \|h_j\|_{H^{\delta}(\mathcal{K} \times [0,\pm 2])} + \|h_j\|_{H^{s}(\mathcal{J})}$ for some $\delta>0$. In this case, instead of \eqref{eq:estimate2}, we would have obtained the estimate
\begin{align}
\label{eq:estimate3}
\|g\|_{L^2(Q, H^{s}( I))}
\leq  C \|g\|_{H^{2s+\delta -\frac{1}{2}}(\mathcal I)}\frac{1}{\left| \log \left( C\frac{\|P(D)g\|_{H^{-s}(\mathcal J)}}{\|g\|_{H^{2s+\delta -\frac{1}{2}}(\mathcal I)}}\right) \right|^{\frac{\nu}{2}}}.
\end{align}
In our application to Runge approximation properties in the next section, we will rely on this variant of the estimate.
\end{rmk}

\begin{rmk}
We remark that as in \cite{RS17b} in this treatment of the multi-dimensional problem we have heavily used the fact that the quantitative unique continuation problem from above essentially reduces to a one-dimensional problem by slicing. It was thus important that either by locality or by the support assumption of $g$ we have that $L(D)g|_{\mathcal{J}_j} = m(D')g|_{\mathcal{J}_j}=0$.
\end{rmk}

\section{An Application: Quantitative Runge Approximation}
\label{sec:applic}

As an example of the applicability of our discussion from the previous sections, we prove quantitative Runge approximation properties for the operator 
\begin{align}
\label{eq:model}
\tilde{L}_{s} := \sum\limits_{j=1}^{n}(-\p_{x_j}^2)^s.
\end{align}

We remark that while \emph{qualitative} Runge approximation can be inferred from the works \cite{DSV16} and \cite{RS17a}, \emph{quantitative} Runge approximation results were not known previously for the model operator \eqref{eq:model}. While in principle it would be possible to obtain these results also from quantitative slicing arguments as in \cite{RS17a}, the estimates on the slices rely on relatively involved Carleman type estimates for the Caffarelli-Silvestre extension (for instance \cite{RS17} contains a new boundary-bulk Carleman estimate). Although these have the advantage of being very robust and in particular being able to deal with a rather large class of \emph{variable} coefficient operators, we show that for  many \emph{constant} coefficient operators, the easier quantitative analytic continuation arguments which were developed in this article suffice for proving quantitative Runge approximation results.  These could in turn be applied in order to derive stability estimates for the associated inverse problems, as for instance in \cite{RS17}.

As an example of this we prove the quantitative Runge approximation result of Theorem ~\ref{thm:quant_Runge}.

We recall that we assumed that zero was not a Dirichlet eigenvalue of the operator $\tilde{L}_s + q$.
This allows us to define the Poisson operator $P_q$ as the operator mapping $H^{s}(W)\ni f\mapsto u \in \widetilde{H}^s(\Omega)$, where $u$ is a solution associated with the operator $L_s = \tilde{L}_s + q$, and that this leads to a well-defined, bounded operator. Indeed, assuming that $q$ is such that zero is not a Dirichlet eigenvalue of the operator $L_s= \tilde{L}_s + q$, the (inhomogeneous) forward problem for the operator $L_s$, i.e. the solvability question for the problem
\begin{align*}
L_s u &= F \mbox{ in } \Omega,\\
u & = f \mbox{ in } \Omega_e,
\end{align*}
is well-posed for $f \in H^s(\Omega_e),\ F \in (\widetilde{H}^s(\Omega))^{\ast}$. As in the case of the fractional Laplacian, this directly follows from energy methods which also yield energy estimates of the form
\begin{align}
\label{eq:apriori}
\|u\|_{H^{s}(\Omega)} \leq C(\|F\|_{(H^{s}(\Omega))^{\ast}} + \|f\|_{H^{s}(\Omega_e)}).
\end{align}

Seeking to derive Theorem ~\ref{thm:quant_Runge}, we observe that as explained in Section 2 in \cite{RS17} it suffices to prove a quantitative unique continuation result for the dual equation
\begin{align}
\label{eq:dual}
\begin{split}
(\tilde L_s +q) w & = v \mbox{ in } \Omega,\\
w & = 0 \mbox{ in } \Omega_e.
\end{split}
\end{align}

Following the same arguments as in Section 2 in \cite{RS17}, Theorem ~\ref{thm:quant_Runge} can be deduced from the following quantitative unique continuation result:

\begin{prop}
\label{prop:dual_UCP}
Let $\Omega$, $W$, $q$ and $L_s$  be as in Theorem ~\ref{thm:quant_Runge}.
Let $w \in H^{s}_{\overline{\Omega}}$ be a solution to \eqref{eq:dual} with $v\in L^2(\Omega)$. Then, there exist $\mu>0$ and $C>0$ such that
\begin{align}
\label{eq:quant_UCP}
\|v\|_{H^{-s}(\Omega)}
\leq \frac{C}{\left| \log\left( C\frac{\|v\|_{L^2(\Omega)}}{\|{L}_s w\|_{H^{-s}(W)}} \right) \right|^{\mu}} \|v\|_{L^2(\Omega)}.
\end{align}
\end{prop}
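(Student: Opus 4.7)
My plan is to deduce Proposition \ref{prop:dual_UCP} by applying the multi-dimensional quantitative unique continuation estimate of Theorem \ref{thm:s_gen_nD} --- more precisely, the $\delta$-variant \eqref{eq:estimate3} from Remark \ref{rmk:Runge_applic_mult} --- to $g=w\in H^s_{\overline\Omega}$, combined with the a priori estimate \eqref{eq:apriori} for the forward problem and an elementary interpolation step.

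First, $\tilde L_s=\sum_{j=1}^n(-\partial_{x_j}^2)^s$ fits the template $P(D)=|D_{x_n}|^{2s}+L(D)+m(D')$ of Theorem \ref{thm:s_gen_nD} with $L(D)\equiv 0$ and $m(D')=\sum_{j=1}^{n-1}|D_{x_j}|^{2s}$. Since $m(D')$ acts only in $x'$, the structural hypothesis $m(D')g|_{\mathcal J}=0$ reduces to $g(\cdot,x_n)\equiv 0$ for $x_n\in J$, and this is automatic for $g=w$ supported in $[-1,1]^n$ as soon as $\mathcal J=Q\times J$ with $J\cap[-1,1]=\emptyset$. Using the invariance of $\tilde L_s$ under coordinate permutations and reflections, and covering $\partial\Omega$ by finitely many coordinate slabs, I may assume that $W$ meets $\{x_n>1\}$. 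Applying \eqref{eq:estimate3} with $\mathcal I=Q\times I\supset\Omega$ (so $Q\supset(-1,1)^{n-1}$, $I\supset(-1,1)$) and $\mathcal J=Q\times J$ with $J\Subset(1,\infty)$, using that $w$ is supported in $\mathcal I$ (so in particular $\|w\|_{L^2(\mathcal I)}=\|w\|_{L^2(\Omega)}$, and a density approximation validates the application to $w$), gives
\begin{align*}
\|w\|_{L^2(\Omega)}
\leq C\,\|w\|_{H^{2s+\delta-\frac12}(\R^n)}\,\frac{1}{\bigl|\log\bigl(C\,\|\tilde L_s w\|_{H^{-s}(\mathcal J)}/\|w\|_{H^{2s+\delta-\frac12}(\R^n)}\bigr)\bigr|^{\mu_1}}
\end{align*}
for some $\mu_1,\delta>0$.

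Next, observe that since $w\equiv 0$ on $\Omega_e$ and $q$ is supported in $\Omega$, one has $\tilde L_s w|_{\Omega_e}=L_s w|_{\Omega_e}$; hence, either by direct inclusion if $\mathcal J\subset W$ or, more generally, after propagating the smallness of $L_s w$ from $W$ to $\mathcal J$ via iterated three-balls inequalities applied to the analytic comparison functions $h_j=(|D_{x_n}|^{2s}-P_s^j(D_{x_n}))w$ from Section \ref{sec:sgen} (which extend analytically in $x_n$ into the upper/lower complex half-planes because $w$ is compactly supported; cf.\ Theorem \ref{thm:analyticnD}), the argument of the logarithm is bounded by $C\|L_sw\|_{H^{-s}(W)}/\|w\|_{H^{2s+\delta-\frac12}(\R^n)}$. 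To convert this $L^2$-bound on $w$ into the desired $H^{-s}$-bound on $v$, I would write $v=\tilde L_sw+qw$ to get $\|v\|_{H^{-s}(\Omega)}\leq C\|w\|_{H^s(\R^n)}+\|q\|_{L^\infty}\|w\|_{L^2(\Omega)}$, and then interpolate $\|w\|_{H^s(\R^n)}\leq C\|w\|_{L^2(\Omega)}^\theta\|w\|_{H^{2s+\delta-\frac12}(\R^n)}^{1-\theta}$ for some $\theta\in(0,1)$. The a priori estimate \eqref{eq:apriori} combined with higher elliptic regularity for $\tilde L_s$ (guaranteed by the non-eigenvalue assumption) yields $\|w\|_{H^{2s+\delta-\frac12}(\R^n)}\leq C\|v\|_{L^2(\Omega)}$. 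Inserting the preceding logarithmic bound for $\|w\|_{L^2(\Omega)}$ and rearranging yields \eqref{eq:quant_UCP} with a suitable $\mu>0$.

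The main obstacle is the preliminary propagation of smallness of $L_s w$ from an arbitrary bounded open $W\subset\Omega_e$ to a slab $\mathcal J=Q\times J$ with $Q\supset(-1,1)^{n-1}$. Spreading information in the $x_n$-direction is handled by the analytic comparison machinery of Section \ref{sec:sgen}, but in the $x'$-directions only $H^s$-regularity of $\tilde L_sw$ is available. I would bypass the $x'$-spreading entirely by the coordinate-cover reduction mentioned above: apply the scheme to a finite cover of $\partial\Omega$ by neighbourhoods of its faces, so that in each application $W$ is replaced by its intersection with the corresponding exterior slab, and conclude by summing the resulting estimates. A secondary technical point is to justify the higher regularity $\|w\|_{H^{2s+\delta-\frac12}(\R^n)}\leq C\|v\|_{L^2(\Omega)}$, which is standard for fractional Laplacian-type operators but must be verified for the sum-of-fractional-Laplacians operator $\tilde L_s$.
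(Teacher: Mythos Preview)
Your overall strategy aligns with the paper's --- apply the $\delta$-variant \eqref{eq:estimate3} of Theorem~\ref{thm:s_gen_nD} to $g=w$, use the higher regularity bound $\|w\|_{H^{2s+\delta-1/2}}\leq C\|v\|_{L^2}$ (which the paper obtains from Vishik--Eskin estimates, confirming your ``secondary technical point''), and convert to a bound on $\|v\|_{H^{-s}}$. But there is a genuine gap in how you secure the data for Theorem~\ref{thm:s_gen_nD}, and the route you take from the analytic-continuation estimate back to $\|v\|_{H^{-s}}$ differs from the paper's.

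\textbf{The gap: two-sided data.} Theorem~\ref{thm:s_gen_nD} (inheriting from Theorem~\ref{thm:s_gen_1D}) requires $J=J_1\cup J_2$ with $J_1$ to the \emph{left} and $J_2$ to the \emph{right} of $I$; your application with $J\Subset(1,\infty)$ supplies only the right side. The reason is structural: in the proof, $h_1$ agrees with $|D_{x_n}|^{2s}w$ only on $\mathcal J_1$ (since $P_s^1(D_{x_n})w$ is supported to the right of $I$), while $h_2$ agrees with it only on $\mathcal J_2$; both are needed to reconstruct $P_s(D_{x_n})w=h_1+h_2$ on $\mathcal I$. Your proposed propagation ``in the $x_n$-direction via the analytic comparison functions $h_j$'' therefore cannot manufacture the missing side: $h_1$ and $h_2$ are different functions whose boundary traces coincide with $\tilde L_s w$ on \emph{opposite} components of $\mathcal J$, so controlling $h_2$ from the right tells you nothing about $h_1$ on the left. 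Likewise, the ``coordinate-cover of $\partial\Omega$'' does not address the problem that $W$ is a fixed bounded open set which in general covers no slab $Q\times J$ with $Q\supset(-1,1)^{n-1}$ at all, on either side.

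The paper's fix is simpler and bypasses both your $x_n$- and $x'$-spreading difficulties at once: by analytic pseudolocality (Lemma~\ref{lem:analytic_pseudo}) the function $L_s w=\tilde L_s w$ is itself \emph{real analytic} throughout $\Omega_e$, so an iterated three-balls inequality applied directly to $L_s w$ (not to $h_j$) propagates its smallness from $W$ to the full annular region $B_4\setminus B_2$ with a H\"older loss. This single reduction furnishes two-sided slabs $W^\ell=(-1,1)^{\ell-1}\times\big((2,3)\cup(-3,-2)\big)\times(-1,1)^{n-\ell}$ in every coordinate direction simultaneously.

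\textbf{Difference in the back-conversion.} Once two-sided data are in hand, the paper applies \eqref{eq:estimate3} in \emph{each} direction $\ell=1,\dots,n$, obtaining $\|w\|_{L^2_{x'}(Q,H^s_{x_\ell}(I))}$ on the left, and then \emph{sums} over $\ell$ to recover $\|w\|_{H^s(\Omega)}$ directly (using the compact support of $w$), which is immediately equivalent to $\|v\|_{H^{-s}(\Omega)}$ via \eqref{eq:apriori2}. Your route --- apply in one direction, keep only $\|w\|_{L^2}$, and interpolate $\|w\|_{H^s}\leq\|w\|_{L^2}^\theta\|w\|_{H^{2s+\delta-1/2}}^{1-\theta}$ --- would also work (with a smaller exponent $\mu$) once the two-sided data issue is repaired, but it discards the $H^s_{x_n}$-gain built into \eqref{eq:estimate3} and then re-earns it by interpolation. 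The paper's summing argument is more direct and makes transparent why the anisotropic norm $L^2(Q,H^s(I))$ appears in \eqref{eq:estimate3}.
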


\begin{proof}
We argue in two steps:

\emph{Step 1: Set-up.}
Without loss of generality, we may assume that $(B_4\setminus B_2) \subset W$. Indeed, as $(B_4 \setminus B_2) \cap [-1,1]^n = \emptyset$, we may else invoke analytic continuation in the form a three balls estimate (as in \eqref{eq:three_balls}, for instance) to extend $L_s w$ from $W$ to $B_4 \setminus B_2$ with Hölder bounds.

Next we consider $\ell\in \{1,\dots,n\}$ and define the functions
\begin{align*}
h_\ell^{j}(x) = \left( (-\p_{x_{\ell}}^2)^{s} - P_{s}^{j}(D_{x_\ell})\right)w(x), \quad j=1,2,
\end{align*}
with  $P^j_s$ as in \eqref{eq:Pjs}.
In the sequel, for simplicity, we only consider the case $\ell=n$ and define $\nabla'=(\p_{x_1},\dots, \p_{x_{n-1}})$. 
Let $W^n_1=(-1,1)^{n-1}\times (2,3)$ and $W^n_2=(-1,1)^{n-1}\times (-3,-2)$.
Noting that by the support condition for $w$ we have $\sum\limits_{j=1}^{n-1} (-\p_{x_j}^2)^s w(x',x_n) = 0$ for $(x',x_n)\in W^n:=W^n_1\cup W^n_2$, we apply the argument from Theorem ~\ref{thm:s_gen_nD} to  $h_n^{j}$ in the form of Remark ~\ref{rmk:Runge_applic_mult}. Thus, we obtain for $I:= (-1,1)$, $J_1 := (2,3)$, $J_2=(-3,-2)$, $Q=(-1,1)^{n-1}$  and $w$ playing the role of $g$, the estimate
\begin{align}
\label{eq:estimate4}
\|w\|_{L^2(Q\times H^{s}_{x_n}((-1,1)))}
\leq C M \frac{1}{\left| \log\left( \frac{\| L_s(D) w\|_{H^{-s}(W^n)}}{M} \right) \right|^{\frac{\nu}{2}}},
\end{align}
 where for $s\in [\frac{1}{2},1)$, $\delta>0$
\begin{align*}
M \geq C \|w\|_{H^{s+(\delta+s-\frac{1}{2})}(\Omega)} \geq C \|h_n^j\|_{H^{\delta}(\mathcal{K} \times [0,\pm 2])} .
\end{align*}
By symmetry we may also apply this for all other values of $\ell\in\{1,\dots,n\}$ which yields bounds as in \eqref{eq:estimate4} where the left hand side is replaced by $\|w\|_{L^2(Q\times H^{s}_{x_\ell}((-1,1)))}$ and in the right hand side the data are measured in terms of the expressions $\| L_s(D) w\|_{H^{-s}(W^\ell)}$ with $W^{\ell}:= (-1,1)^{\ell-1}\times J \times (-1,1)^{n-\ell}$. By the compact support condition for $w$, we obtain that
\begin{align*}
\sum\limits_{\ell=1}^{n} \|w\|_{L^2(Q \times H^{s}_{x_\ell}((-1,1)))} \geq \|w\|_{H^{s}((-1,1)^{n})} \ \mbox{ for } \ell\in\{1,\dots,n\}.
\end{align*}
Adding all the estimates of the type \eqref{eq:estimate4} and enlarging the bound from $\|L_s(D) w\|_{H^{-s}(W^\ell)}\leq \|L_s(D) w\|_{H^{-s}(W)}$, we thus obtain the bound 
\begin{align}
\label{eq:estimate5}
\|w\|_{H^{s}(\Omega)}
\leq C M  \frac{1}{\left| \log\left( \frac{\| L_s(D) w\|_{H^{-s}(W)}}{M} \right) \right|^{\frac{\nu}{2}}}.
\end{align}

\emph{Step 2: Rewriting the estimates in terms of $v$.}
In order to rewrite \eqref{eq:estimate5} in terms of $v$, we observe that by the a priori estimates for the operator $L_s$ (which follow from simple energy estimates as in the case of $(-\D)^s$, see for instance \cite{GSU16} and \eqref{eq:apriori}), we have
\begin{align*}
\|w\|_{H^{s}(\Omega)} &\leq C \|v\|_{H^{-s}(\Omega)}.
\end{align*}
Moreover, by the equation satisfied by $w, v$ and the structure of $\tilde{L}_s$ we have 
\begin{align}
\label{eq:apriori2}
\begin{split}
\|v\|_{H^{-s}(\Omega)}& \leq \|\tilde{L}_s w\|_{H^{-s}(\Omega)} + \|q w\|_{H^{-s}(\Omega)}
 \leq C \|w\|_{H^{s}(\Omega)} + \| q w\|_{L^2(\Omega)}\\
& \leq C (1+\|q\|_{L^{\infty}(\Omega)}) \|w\|_{H^{s}(\Omega)} .
\end{split}
\end{align}
Thus, there exists a constant $C>1$ (depending on $\|q\|_{L^{\infty}(\Omega)}$) such that
\begin{align*}
C^{-1}\|v\|_{H^{-s}(\Omega)} \leq \| w\|_{H^{s}(\Omega)} \leq C \|v\|_{H^{-s}(\Omega)}.
\end{align*}
Moreover, since the operator $\tilde{L}_s$ also satisfies the $\mu$-transmission condition, Vishik-Eskin \cite{VE65} estimates yield that for $s+\delta-\frac{1}{2}\in (-\frac{1}{2},\frac{1}{2})$ and $\delta \in (0,1)$ sufficiently small, 
\begin{align}
\|w\|_{H^{s+(s-\frac{1}{2}+\delta)}(\Omega)} \leq C \|v\|_{H^{-s+(s-\frac{1}{2}+\delta)}(\Omega)} \leq C \|v\|_{L^2(\Omega)} =:M.
\end{align}
Plugging this information into \eqref{eq:estimate5}, we infer
\begin{align}
\label{eq:estimate6}
\|v\|_{H^{-s}(\Omega)}
\leq C  \|v\|_{L^2(\Omega)}  \frac{1}{\left| \log\left( C \frac{\| L_s(D) w\|_{H^{-s}(W)}}{ \|v\|_{L^2(\Omega)}} \right) \right|^{\frac{\nu}{2}}},
\end{align}
which concludes the proof of \eqref{eq:quant_UCP}.
\end{proof}

Last but not least, we give a very rough sketch of how to conclude the result of Theorem ~\ref{thm:quant_Runge} from Proposition ~\ref{prop:dual_UCP}.

\begin{proof}[Proof of Theorem ~\ref{thm:quant_Runge}]
With Proposition ~\ref{prop:dual_UCP} in hand, the proof of Theorem ~\ref{thm:quant_Runge} essentially follows from the same duality arguments as the quantitative Runge approximation result in \cite{RS17}: In a first step, one shows that one may obtain associated orthonormal bases $\{\varphi_j\}\subset H^s_{\overline{W}}$ and $\{w_j\} \subset L^2(\Omega)$ such that for the operator $A:= i r_{\Omega}P_q: H^s_{\overline{W}} \rightarrow L^2(\Omega)$, where $i:H^s(\Omega) \rightarrow L^2(\Omega)$ denotes the inclusion map and $r_{\Omega}$ the restriction to $\Omega$ map, one has
\begin{align*}
A \varphi_j = \sigma_j w_j.
\end{align*}
This follows from the fact that $A$ is compact by Sobolev embedding, injective (by the boundedness of $W$ and the quantitative unique continuation results proved in the previous sections or alternatively by the results of \cite{RS17a} or of Theorem \ref{thm:Isakov}) with a dense range (which follows from the Hahn-Banach theorem and qualitative unique continuation results relying on analytic pseudolocality and the compact support of the test function). Then, with the singular value bases in hand, we may argue exactly as in the proof of Lemma 3.3 (a) in \cite{RS17} in order to obtain the quantitative Runge approximation result.
\end{proof}

\section*{Acknowledgements}
The authors would like to thank Jeremy Marzuola for drawing their attention towards the modified Hilbert transform.

\bibliographystyle{alpha}
\bibliography{citationsHT_new}

\end{document}